\documentclass[reqno,12pt]{amsart}

\usepackage[T1]{fontenc}

\makeatletter
\usepackage{fullpage}

\usepackage{textcmds}  
\usepackage{amsmath, amssymb, amsfonts, amstext, verbatim, amsthm, mathrsfs, stmaryrd}
\usepackage[mathcal]{eucal}
\usepackage[all,cmtip,2cell]{xy}
\usepackage{pgf,tikz,pgfplots}
\pgfplotsset{compat=1.15}
\usetikzlibrary{arrows}
\usetikzlibrary{positioning}
\usetikzlibrary{quotes}
\usepackage{tikz-cd}
\usepackage{multirow}

\usepackage{enumerate}
\usepackage{enumitem}
\usepackage[colorlinks=true,linkcolor=blue,citecolor=blue,urlcolor=blue,citebordercolor={0 0 1},urlbordercolor={0 0 1},linkbordercolor={0 0 1}]{hyperref} 
\usepackage{amsrefs} 
\usepackage[nameinlink]{cleveref}

\usepackage{enotez}
\setenotez{backref=true}

\setcounter{tocdepth}{1}

\newcommand{\sbm}[1]{{\let\amp=&\left[\begin{smallmatrix}#1\end{smallmatrix}\right]}}

\def\makeCal#1{%
\expandafter\newcommand\csname c#1\endcsname{\mathcal{#1}}}
\def\makeBB#1{%
\expandafter\newcommand\csname b#1\endcsname{\mathbb{#1}}}
\def\makeFrak#1{%
\expandafter\newcommand\csname f#1\endcsname{\mathfrak{#1}}}
\def\makeRM#1{%
\expandafter\newcommand\csname r#1\endcsname{\mathrm{#1}}}

\count@=0
\loop
\advance\count@ 1
\edef\y{\@Alph\count@}%
\expandafter\makeCal\y
\expandafter\makeBB\y
\expandafter\makeFrak\y
\expandafter\makeRM\y
\ifnum\count@<26
\repeat

\theoremstyle{plain}
\newtheorem{thm}{Theorem}[section]
\newtheorem{cor}[thm]{Corollary}
\newtheorem{lem}[thm]{Lemma}

\newtheorem{prop}[thm]{Proposition}

\theoremstyle{definition}
\newtheorem{rem}[thm]{Remark}
\newtheorem{defn}[thm]{Definition}
\newtheorem{hyp}[thm]{Hypotheses}

\newtheorem{notn}[thm]{Notation}

\newtheorem{ex}[thm]{Example}

\newtheorem*{thm*}{Theorem}
\newtheorem*{prop*}{Proposition}
\newtheorem*{conj*}{Conjecture}

\newenvironment{customthm}[1]
  {\innercustomthm}
  {\endinnercustomthm}

\def\rm{\mathrm}

\DeclareMathOperator{\rk}{rk}

\DeclareMathOperator{\ch}{ch}
\DeclareMathOperator{\DCoh}{D^b_{coh}}

\DeclareMathOperator{\Coh}{Coh}

\DeclareMathOperator{\Cone}{Cone}

\DeclareMathOperator{\Supp}{Supp}

\DeclareMathOperator{\End}{End}
\DeclareMathOperator{\Ext}{Ext}

\DeclareMathOperator{\GL}{GL}

\DeclareMathOperator{\Hom}{Hom}
\newcommand{\id}{\mathrm{id}}

\DeclareMathOperator{\coker}{coker}
\DeclareMathOperator{\Div}{div}

\newcommand{\pt}{\mathrm{pt}}
\DeclareMathOperator{\QCoh}{QCoh}
\DeclareMathOperator{\rank}{rank}

\newcommand{\RHom}{\mathbf{R}\mathrm{Hom}}

\newcommand*{\sheafhom}{\mathrm{H}\kern -.5pt om}
\newcommand{\Sch}{\mathrm{Sch}}

\DeclareMathOperator{\Spec}{Spec}

\DeclareMathOperator{\Stab}{Stab}

\DeclareMathOperator{\Pic}{Pic}

\DeclareMathOperator{\rdim}{Rdim}
\DeclareMathOperator{\ddim}{Ddim}
\DeclareMathOperator{\gldim}{gldim}
\DeclareMathOperator{\Perf}{Perf}

\DeclareMathOperator{\Ob}{Ob}

\DeclareMathOperator{\im}{im}

\def\db{\mathrm{D}^{\mathrm{b}}}
\def\olk{\overline{k}}

\DeclareMathOperator{\gr}{gr}

\def\bf{\mathbf}

\def\usdim{\overline{\operatorname{Sdim}}}
\def\lsdim{\underline{\operatorname{Sdim}}}
\def\sdim{\operatorname{Sdim}}


\usepackage{pbox}
\usepackage[normalem]{ulem}

\makeatletter

\usepackage{babel}
\begin{document}

\address{Anirban Bhaduri \newline
\indent University of South Carolina, Department of Mathematics\newline
\indent LeConte College, 1523 Greene Street, Columbia, SC 29225}
\email{abhaduri@email.sc.edu}

\address{Isaac Goldberg \newline
\indent Cornell University, Department of Mathematics\newline
\indent 212 Garden Avenue, Ithaca, NY, 14853}
\email{isg26@cornell.edu}

\address{Antonios-Alexandros Robotis \newline
\indent Cornell University, Department of Mathematics\newline
\indent 212 Garden Avenue, Ithaca, NY, 14853}
\email{ar2377@cornell.edu}

\title{Dimension theory of noncommutative curves}

\author{Anirban Bhaduri} 
\author{Isaac Goldberg} 
\author{Antonios-Alexandros Robotis}

\begin{abstract}
    We compute several types of dimension for the bounded derived categories of coherent sheaves of orbifold curves. This completes the calculation of these dimensions for derived categories of noncommutative curves in the sense of Reiten-van den Bergh. Along the way we construct stability conditions for orbifold curves. We also obtain a characterisation of orbifold curves with hereditary tilting bundle in terms of diagonal dimension.
\end{abstract}

\maketitle

\tableofcontents

\section{Introduction}

One conception of noncommutative algebraic geometry is the study of enhanced triang\-ulated categories which are smooth and proper in the suitable sense (see e.g. \cites{OrlovSmoothandproper,ElaginLuntsSchnurer,TabuadavandenBergh,Kuznetsov_2019}). For example, certain invariants of a complex projective variety $X$ can be recovered from its derived category of coherent sheaves, $\DCoh(X)$, illustrating that some geometric properties are intrinsic to the categories themselves. Consequently, it is of interest to have an intrinsic notion of \emph{dimension} of a (pre-)triangulated (dg-)category $\cD$. By now, there are a good number of definitions, which we review in this section (see also \cites{elagin2022calculating,elagin2021three} for more details, including some calculations and conjectures).

In light of these developments, it is reasonable to ask what class of enhanced triangulated categories should be regarded as noncommutative curves. 
In \cite{ReitenvDB}, the authors define a non\-commutative curve as a $k$-linear $\Ext$-finite Abelian category $\cA$ of homological dimension (hd) one, where: 
\[
    \rm{hd}(\cA) := \sup_{E,F\in \cA}\{i\in \bZ:\Hom_{\db(\cA)}(E,F[i]) \ne 0\}. 
\]
If one further imposes that $\cA$ be Noetherian, smooth (i.e. $\db(\cA)$ is saturated as in \cite{BondalKapranov}), and connected, they then provide a classification as follows:

\begin{thm}
\label{T:RvdB}
\cite{ReitenvDB}*{Thm. V.1.2} Let $\cA$ be a Noetherian, smooth, and connected non\-commutative curve, then $\cA$ is equivalent to either 
\begin{enumerate}
    \item $\operatorname{rep} Q$ for $Q$ a finite acyclic quiver; or \vspace{2mm}
    \item $\Coh(\cX)$ for $\cX$ a projective orbifold curve.
\end{enumerate}
\end{thm}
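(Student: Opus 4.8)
The plan is to deduce the statement from the structure theory of Noetherian hereditary abelian categories satisfying Serre duality, as developed in \cite{ReitenvDB}. The first step is to convert smoothness into Serre duality: since $\cA$ is smooth, $\db(\cA)$ is saturated in the sense of \cite{BondalKapranov}, and a saturated triangulated category admits a Serre functor $S$. Because $\rm{hd}(\cA)=1$, every object of $\db(\cA)$ splits as a direct sum of shifts of its cohomologies, so $S$ restricts to the Auslander--Reiten translate $\tau\cong S[-1]$ on $\cA$ (away from projective and injective objects). Thus $\cA$ is a connected, Noetherian, $\Ext$-finite hereditary abelian category with Serre duality, every indecomposable object sits in an AR-triangle, and the classification reduces to determining the possible shapes of the AR-components.

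The argument then splits along a single dichotomy: whether or not $\cA$ contains a nonzero projective object. Suppose first that it does. I would assemble a projective generator $P$ (the Noetherian and connected hypotheses ensure that finitely many indecomposable projectives suffice), set $\Lambda := \End(P)^{\mathrm{op}}$, and show that $\Hom(P,-)$ is an equivalence from $\cA$ onto the category of finitely generated $\Lambda$-modules. Here $\Ext$-finiteness forces $\Lambda$ to be a finite-dimensional $k$-algebra, $\rm{hd}(\cA)=1$ forces it to be hereditary, and connectedness makes it indecomposable; by the structure theory of hereditary Artin algebras it is then the path algebra of a finite, connected, acyclic quiver $Q$, giving $\cA\simeq\operatorname{rep}Q$. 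This is case (1), and it is the routine half of the argument.

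The hard case is the absence of nonzero projectives, which by Serre duality also forces the absence of nonzero injectives; now $\tau$ is an auto-equivalence of $\cA$, and reconstructing a geometric object from purely categorical data is where I expect the main obstacle. The strategy is to isolate the Serre subcategory $\cA_0\subset\cA$ of finite-length objects and analyze its AR-components, which under these hypotheses are stable tubes: the tube ranks encode orbifold (stacky) points while the generic simples give ordinary points. One then extracts a ``function field'' via the localization of $\cA$ at $\cA_0$, builds a structure sheaf together with a sheaf of (possibly ramified) orders over the resulting curve, and identifies the torsion-free objects of $\cA$ with the vector bundles on an orbifold curve $\cX$, so that $\cA\simeq\Coh(\cX)$. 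Finally, $\Ext$-finiteness combined with the Noetherian and saturated hypotheses forces $\cX$ to be smooth and proper, i.e. a projective orbifold curve, yielding case (2).

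To conclude I would verify that the dichotomy is exhaustive. The general classification of \cite{ReitenvDB} also permits hereditary categories with Serre duality that fail to be saturated---for instance categories concentrated in finite-length objects, or ``generic'' and non-proper types---and I would check that each such possibility is excluded precisely by the smoothness hypothesis, so that only cases (1) and (2) survive.
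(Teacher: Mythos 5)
The paper does not prove this statement: it is quoted directly from Reiten--van den Bergh \cite{ReitenvDB}*{Thm.\ V.1.2} and used as a black box, so there is no internal proof to compare your sketch against. Judged as a reconstruction of the cited argument, your outline does follow the correct global strategy of \emph{loc.\ cit.}: saturation gives a Serre functor by \cite{BondalKapranov}, hence almost split triangles and an AR-translate $\tau\cong S[-1]$ on the hereditary heart; one then splits on the existence of nonzero projectives, obtaining a hereditary finite-dimensional algebra in one branch and reconstructing a curve from the finite-length Serre subcategory (tubes at the stacky points, a function field from the quotient category) in the other.

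There are, however, two places where the sketch understates or misstates the real content. First, your parenthetical claim that ``the Noetherian and connected hypotheses ensure that finitely many indecomposable projectives suffice'' is false as stated: the Reiten--van den Bergh classification contains connected, Noetherian, $\Ext$-finite hereditary categories with Serre duality that possess infinitely many indecomposable projectives (the categories attached to infinite quivers of type $\mathbb{Z}A_\infty$ and its variants). These are eliminated only by the saturation (smoothness) hypothesis, and proving that saturation forces finitely many projectives --- equivalently, ruling out these exotic components --- is one of the substantive points of \cite{ReitenvDB}, not a routine afterthought; your final paragraph acknowledges that non-saturated types must be excluded, but the exclusion belongs inside the projective branch of the dichotomy, not after it. Second, the reconstruction of $\cX$ (building the sheaf of hereditary orders over the curve extracted from $\cA/\cA_0$ and matching torsion-free objects with vector bundles) is the technical heart of case (2) and is only named, not carried out; as written the proposal is a correct roadmap rather than a proof. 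A minor further caveat: over a field that is not algebraically closed the projective case yields a finite-dimensional hereditary algebra (a tensor algebra of a species) rather than literally a path algebra $kQ$, a point the paper's own statement also glosses over.
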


In the present work, we compute various dimensions of $\db(\cA)$ for such noncommutative curves $\cA$. When passing from $\cA$ to $\db(\cA)$ some information is lost. For example, the category of finite dimensional represen\-tations of the Kronecker quiver, $\mathrm{rep}(K_2)$, is not equivalent as an Abelian category to $\Coh(\bP^1)$; nevertheless, there is an exact equivalence $\db(\mathrm{rep}\:K_2) \simeq \DCoh(\bP^1)$ furnished by the Beilinson exceptional collection \cite{Beilinsoncollection}. So, the study of noncommutative curves as Abelian versus triangulated categories is not equivalent. Before stating the main results, we will review the relevant notions of dimension. 

\subsection*{Rouquier dimension}

In seminal work of Rouquier, a notion of dimension for trian\-gulated categories was introduced \cite{Rouquier_2008}. We recall the definition here, following the indexing convention of \cite{BallardFavero}. 

Let $\cT$ be a triangulated category and $\cS$ a subcategory. Denote by $\langle \cS \rangle$ the smallest full subcategory containing $\cS$ which is closed under shifts, isomorphisms, direct coproducts and summands. For any two subcategories $\cS_1$ and $\cS_2$, define an operation $*$ such that $\cS_1 * \cS_2$ is the full subcategory of $\cT$ consisting of all objects $A$ fitting into a distinguished triangle $X_1 \rightarrow A \rightarrow X_2 \rightarrow X_1[1]$ with $X_i \in \cS_i$ for $i=1,2$. Define another operation $\diamond$ on subcategories by $\cS_1 \diamond \cS_2:= \langle \cS_1 * \cS_2 \rangle.$ It is well known that both operations are associative \cite{bondalVdb}. Finally, define inductively: $\langle \cS\rangle_0 := \langle \cS \rangle$ and $\langle \cS \rangle_n := \langle \cS \rangle_{n-1} \diamond \langle \cS \rangle$ for any $n\ge 1$.

For an object $E$ in $\mathcal{T}$, the \emph{generation time} of $E$ is the least $n\in \bN$ such that $\langle E\rangle_n = \cT$, or $\infty$ if no such $n$ exists. $E$ is called a \emph{strong generator} of $\cT$ if it has finite generation time. The \emph{Rouquier dimension} of $\cT$, $\rdim(\cT)$, is the minimal generation time over all $E$. 

In words, Rouquier dimension measures the complexity of a category by the number of cones it takes to build any object out of a single generating object. Strong generators, when they exist, are helpful in understanding the structure of triangulated categories, since they yield exact equivalences to categories of modules over dg-algebras where more tools and techniques are available \cites{bondalVdb,Orlovgeometricrealization,Kellerderivingdg,BondalAssociative}.

We recall several results about Rouquier dimension. First of all, for $X$ a separated scheme of finite type over a perfect field $k$, $\rdim \DCoh(X) <\infty$ \cite{Rouquier_2008}*{Theorem 7.38}. In the same paper it is also proven that for $A$ a finite dimensional $k$-algebra one has $\rdim \db(\mathrm{mod}\:A) \le \gldim(A)$; see \cite{Rouquier_2008}*{Thm. 7.4}. Elagin refines this result by proving that if $Q$ is a finite connected acyclic quiver then $\rdim \db(\rm{mod}\:kQ) = 0$ if and only if $Q$ is of Dynkin type \cite{elagin2022calculating}*{Prop. 4.3}. It follows that if $Q$ is not of Dynkin type then $\rdim\db(\rm{mod}\:kQ) = 1$. 

Given that dimension should be a fundamental invariant of a category, one would like to have explicit computations for a good stock of examples. Orlov proved that $\rdim \DCoh(X) = 1$ for $X$ a smooth projective curve of genus at least one \cite{orlov2008remarks}*{Thm. 6}. On the other hand, $\rdim \DCoh(\bP^1) = 1$ by the derived equivalence to $\db(\mathrm{rep}\:K_2)$ and Rouquier's results. Following this, Orlov conjectured that if $X$ is a smooth quasi-projective scheme of dimension $n$ then $\rdim(X) = n$ \cite{orlov2008remarks}*{Conj. 10}. Few results confirming Orlov's conjecture have been proven to date. In \cite{BallardFavero}*{Cor. 3.22} the conjecture is verified for projective spaces, odd dimensional quadrics, and some Fano threefolds. A natural extension of Orlov's conjecture to orbifold projective lines was proven by Elagin \cite{elagin2022calculating}*{Prop. 6.1}. Recently, the case of normal toric varieties was proven in \cite{FaveroHuang}.

\subsection*{Diagonal dimension}

In what follows, $\cD$ denotes a triangulated category linear over a field $k$ which is equivalent to $\Perf(A)$ for $A$ a smooth and compact dg-algebra over $k$. Given a smooth and compact dg $k$-algebra $A$, the \emph{diagonal dimension} of $A$ is the least $n$ such that there exist $F \in \Perf(A^{\rm{op}})$ and $G\in \Perf(A)$ with the diagonal $A^{\rm{op}}\otimes A$-bimodule $A$ contained in $\langle F\boxtimes G\rangle_n$. This was originally defined in \cite{BallardFavero}*{Def. 2.15} in the geometric context as the least $n$ such that there exist $F,G\in \DCoh(X)$ with $\cO_\Delta \in \langle F\boxtimes G\rangle_n \subset \DCoh(X\times X)$. It is proven in \cite{elagin2022calculating}*{Lem. 4.5} that the two notions are equivalent for $\DCoh(X)$ where $X$ is a smooth projective variety. We denote the diagonal dimension by $\ddim(A)$ in the former case and $\ddim(X)$ in the latter.

Diagonal dimension enjoys some interesting properties and relations to Rouquier dimen\-sion. If $X$ and $Y$ are varieties, then one has $\ddim(X\times Y)\le \ddim(X) + \ddim(Y)$. Furthermore, if $X$ is smooth and proper then $\dim(X) \le \rdim(X) \le \ddim(X) \le 2\dim(X)$ by \cite{BallardFavero}*{Lem. 2.16}; see also \cite{elagin2021three}*{Lem. 4.14}. It is also proven that for $A$ a finite dimensional algebra, $\ddim(A)$ is bounded above by the global dimension of $A$; see \cite{elagin2021three}*{Prop. 4.15}. Furthermore, for $A$ the path algebra of an acyclic quiver $Q$, one has that $\ddim(A) = 1$ by \cite{elagin2022calculating}*{Prop. 4.5}. 

It is also important to note that the inequality $\rdim(X) \le \ddim(X)$ can be strict in some cases. For example, Olander  proves that for $X$ a curve of genus at least one, $\ddim(X) = 2$ \cite{olander2024diagonal}, while $\rdim(X) = 1$ by \cite{orlov2008remarks}.

\subsection*{Serre dimension} 
For $X$ a smooth and projective variety, it is well known that the Serre functor \cite{BondalKapranov} is given by $E\mapsto E\otimes \omega_X[\dim X]$, where $\omega_X$ is the canonical line bundle of $X$. This suggests that one should be able to formulate a notion of dimension for triangulated categories $\cD$ with a Serre functor.

The Serre dimension of $\cD$ can be defined as the categorical entropy of the Serre functor of $\cD$ in the sense of \cite{DHKK}. In \cite{elagin2021three} the notions of upper and lower Serre dimension are introduced which refine the original notion. We recall the definitions here. For two objects $G_1$ and $G_2$ of $\cD$ put 
\[
    e_-(G_1,G_2) = \inf\{i:\Hom^i(G_1,G_2) \ne 0\}\:\: \text{ and }\:\: e_+(G_1,G_2) = \sup\{i:\Hom^i(G_1,G_2)\ne 0\}.
\]
Now, for $G$ and $G'$ \emph{generators} of $\cD$, the \emph{upper} and \emph{lower Serre dimension} of $\cD$ are defined, respectively, as 
\[
\usdim(\cD) = \limsup_{m\to \infty} \frac{-e_-(G,S^m(G'))}{m} \:\:\text{ and }\:\: \lsdim(\cD) = \liminf_{m\to\infty} \frac{-e_+(G,S^m(G'))}{m}.
\]
These definitions do not depend on the choice of generators; see \cite{elagin2021three}*{\S 6}. One has $\lsdim(\cD) \le \usdim(\cD)$ (when $\cD\ne 0$) and in the case where $\lsdim(\cD) = \usdim(\cD)$ we call the common value the \emph{Serre dimension} of $\cD$ and denote it $\sdim(\cD)$.

One advantage of Serre dimension is that it is computable in cases of interest in geometry. Indeed, the Serre dimension of $\DCoh(X)$ for $X$ a smooth and projective variety over a field $k$ is $\dim X$ \cite{elagin2021three}*{Lem. 5.6}. Furthermore, there are some relationships between properties of the space of stability conditions $\Stab(\cD)$ of $\cD$ and the Serre dimension; see \Cref{T:KOT} and \cites{Ikeda_Qiu_2023,Qiu2022,OtaniGldim}. Finally, if $Q$ is a non-Dynkin acyclic quiver then $\sdim \db(\mathrm{rep}\:Q) = 1$. On the other hand, if $Q$ is of ADE type then $\sdim \db(\mathrm{rep}\:Q) = 1 - 2/h$, where $h$ is the Coxeter number of $Q$; see \cite{elagin2022calculating}*{Props. 4.1, 4.2}.

\subsection*{Global dimension}

For a given surjective homomorphism $v:K_0(\cD)\twoheadrightarrow \Lambda$ from the Grothendieck group of $\cD$ to a finite rank lattice, Bridgeland associates to the pair $(\cD,v)$ a complex manifold $\Stab(\cD)$, which has dimension $\rk \Lambda$ if it is nonempty. We will not recall the definition of a Bridgeland stability condition here, referring instead to \cites{Br07,macrischmidt}. We will mention, however, that part of the data of a stability condition is a collection of subcategories of \emph{semistable objects of phase} $\phi$, denoted $\cP(\phi)$, for all $\phi \in \bR$. Given $\sigma \in \Stab(\cD)$, its global dimension is 
\[
    \gldim(\sigma) := \sup\{\phi_2-\phi_1: \Hom_{\cD}(A_1,A_2)\ne 0 \text{ and } A_i \in \cP(\phi_i)\text{ for } i =1,2\}.
\]
The \emph{global dimension} of $\cD$ is then $\gldim(\cD) = \inf_{\sigma \in \Stab(\cD)} \gldim(\sigma)$. It was originally introduced in \cite{Ikeda_Qiu_2023} and is expected in some cases to equal the conformal dimension of a certain Frobenius manifold associated to $\cD$. Given the difficult nature of computing properties of $\Stab(\cD)$ (such as, for instance, non-emptiness) $\gldim(\cD)$ is not known in many cases. 

We recall here some relevant cases where it is known. Qiu has computed the values of $\gldim$ in the case of $\db(\operatorname{rep}Q)$ where $Q$ is a finite acyclic quiver. In the case of a Dynkin quiver $Q$, one has $\gldim(Q) = 1 - 2/h$, where $h$ is the Coxeter number of $Q$; see \cite{Qiu2022}*{Thm. 4.8}. In the case of a non-Dynkin finite acyclic quiver, one has $\gldim(Q) = 1$ by \cite{Qiu2022}*{Thm. 5.2}. Next, by results of \cite{KOT}, it is known that $\gldim(X) =1 $ for $X$ a smooth projective curve. We also record a key theorem: 

\begin{thm}
\label{T:KOT}
\cite{KOT}*{Thm. 4.2} If $\cD$ is equivalent to the perfect derived category of a smooth and proper dg-algebra over $\bC$, then $\usdim(\cD) \le \gldim(\cD)$.
\end{thm}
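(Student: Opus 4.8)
The plan is to prove the sharper pointwise statement that $\usdim(\cD) \le \gldim(\sigma)$ for \emph{every} $\sigma \in \Stab(\cD)$, and then take the infimum over $\sigma$ to conclude. If $\Stab(\cD) = \emptyset$ the right-hand side of the theorem is $+\infty$ and there is nothing to prove, so I fix a stability condition $\sigma = (Z,\cP) \in \Stab(\cD)$. Smoothness and properness of the dg-algebra guarantee that $\cD$ admits a Serre functor $S$, so every quantity below is defined. For an object $E$ with Harder--Narasimhan factors of phases $\phi_1 > \cdots > \phi_n$, I write $\phi^+(E) = \phi_1$ and $\phi^-(E) = \phi_n$ for the extremal phases. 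Since $\usdim$ is independent of the chosen generators, I may compute with arbitrary generators $G, G'$.

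First I would bound $e_-$ by phases. By a standard dévissage along the Harder--Narasimhan filtrations of both arguments, nonvanishing of $\Hom^i(G, S^m G')$ forces the existence of a semistable factor $A$ of $G$ and a semistable factor $B$ of $S^m(G')[i]$ with $\Hom(A,B) \ne 0$ (all homomorphisms here being unshifted). The slicing axiom $\Hom(\cP(\phi),\cP(\psi)) = 0$ for $\phi > \psi$ then gives $\phi(A) \le \phi(B)$, and writing $B = B'[i]$ for a factor $B'$ of $S^m(G')$ yields $\phi^-(G) \le \phi(A) \le \phi(B) = \phi(B') + i \le \phi^+(S^m G') + i$. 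Hence $e_-(G, S^m G') \ge \phi^-(G) - \phi^+(S^m G')$, i.e.
\[ -e_-(G, S^m G') \le \phi^+(S^m G') - \phi^-(G). \]

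The crux is then to control the growth of $\phi^+(S^m G')$, and this is exactly where the global dimension enters. I claim that for any object $E$ one has $\phi^+(SE) \le \phi^+(E) + \gldim(\sigma)$. To see this, let $A$ be the maximal-phase Harder--Narasimhan factor of $SE$, so that $A \hookrightarrow SE$ is a subobject with $\phi(A) = \phi^+(SE)$. Serre duality gives $\Hom(A, SE) \cong \Hom(E,A)^\dual \ne 0$, and a second dévissage along the filtration of $E$ produces a semistable factor $F$ of $E$ with $\Hom(F, A) \ne 0$. By the very definition of $\gldim(\sigma)$ this forces $\phi(A) - \phi(F) \le \gldim(\sigma)$, and since $\phi(F) \le \phi^+(E)$ we obtain $\phi^+(SE) = \phi(A) \le \phi^+(E) + \gldim(\sigma)$. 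Iterating gives $\phi^+(S^m G') \le \phi^+(G') + m\,\gldim(\sigma)$. Combining the two estimates,
\[ \usdim(\cD) = \limsup_{m\to\infty} \frac{-e_-(G, S^m G')}{m} \le \limsup_{m\to\infty} \frac{\phi^+(G') - \phi^-(G) + m\,\gldim(\sigma)}{m} = \gldim(\sigma), \]
and passing to the infimum over $\sigma$ finishes the proof.

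The main obstacle is the key phase-shift estimate $\phi^+(SE) \le \phi^+(E) + \gldim(\sigma)$: the clean bound hinges on correctly marrying the Serre-duality isomorphism to the slicing, in particular on identifying the extremal Harder--Narasimhan factor as a \emph{genuine} sub- or quotient object so that the $\gldim$ bound — which is phrased in terms of ordinary, unshifted $\Hom$ between semistables — can be applied verbatim. One must also verify throughout that the dévissage only ever produces unshifted homomorphisms between semistable factors, since $\gldim(\sigma)$ constrains $\Hom^0$ rather than the full graded $\Hom^\bullet$; keeping track of this is the delicate bookkeeping of the argument.
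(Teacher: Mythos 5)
The paper does not give a proof of this statement at all: it is quoted directly from \cite{KOT}*{Thm. 4.2}, so there is no internal argument to compare against. Your proof is correct and is essentially the argument of Kikuta--Ouchi--Takahashi themselves: the pointwise bound $\usdim(\cD)\le\gldim(\sigma)$ for each $\sigma\in\Stab(\cD)$, obtained from the d\'evissage estimate $-e_-(G,S^mG')\le \phi^+(S^mG')-\phi^-(G)$ together with the Serre-duality phase estimate $\phi^+(SE)\le\phi^+(E)+\gldim(\sigma)$, is exactly the mechanism of their proof. The only cosmetic caveat is that the top Harder--Narasimhan factor $A$ of $SE$ is not literally a subobject in the triangulated setting; what one actually uses is that the canonical morphism $A\to SE$ coming from the first step of the HN filtration is nonzero (if it vanished, the filtration would split incompatibly with the phase ordering), so $\Hom(A,SE)\ne 0$, which is all your Serre-duality step requires.
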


It is perhaps interesting to remark that in all of the cases considered here this inequality is in fact an equality. The case of orbifolds was the next to be addressed. Recently, Otani has shown using \Cref{T:KOT} that the global dimension of a rational orbifold curve (i.e. one with coarse moduli space $\bP^1$) is one \cite{OtaniGldim}*{Thm. 3.2}.

\subsection*{Results}

In the previous sections we have recalled four notions of dimension for suitable categories $\cD$. In the present work, we compute the values of these dimensions in the case of $\DCoh(\cX)$ for $\cX$ a projective orbifold curve. By the Reiten-Van den Bergh classification of noncommutative curves and the previously cited results, this completes the computation of the dimensions of $\db(\cA)$ for $\cA$ a noncommutative curve. We compile the values in Figure \ref{F:table}. 

\begin{figure}[h]
\label{F:table}
\begin{center}
\begin{tabular}{ |c||c|c|c|c|c| } 
\hline
$\cA$ & $\mathrm{hdim}$ & $\rdim$ & $\ddim$ & $\mathrm{Sdim}$ & $\gldim$\\
\hline
\hline
$\mathrm{mod}(k)$ & 0 & 0 & 0 & 0 & 0 \\ 
\hline
$\mathrm{rep}(Q_{\text{ADE}})$ & 1 & 0 & 1 & $1-\frac{2}{h}$ & $1 - \frac{2}{h}$\\ 
\hline
$\mathrm{rep}(Q_{\text{non-ADE}})$& 1 & 1 & 1 & 1 & 1\\ 
\hline 
\text{rational orbifold curve}& 1 & $\mathbf{1}$ & $\textbf{1 or 2}$ & $\mathbf{1}$ & 1\\
\hline
\text{irrational orbifold curve}& 1 & $\mathbf{1}$ & $\mathbf{2}$ & $\mathbf{1}$ & $\mathbf{1}$ \\
\hline
\end{tabular}
\end{center}
\caption{Above are the various dimensions of $\db(\cA)$ for Abelian $\cA$. Note that here $Q$ denotes a finite acyclic quiver and $\mathrm{rep}(Q)$ the category of its finite-dimensional representations. $Q_{\rm{ADE}}$ denotes a quiver of ADE (Dynkin) type (besides $A_1$). The boldfaced values are results obtained in the present work, while the others follow from the results cited in the previous sections.}
\end{figure}

\newpage
By mildly generalizing the methods of \cite{orlov2008remarks}, we prove:

\begin{customthm}{A}
[=\Cref{T:Orlovtheorem}] The Rouquier dimension of a nonsingular and projective tame orbifold curve $\cX$ is one. 
\end{customthm}

The case of the diagonal dimension is perhaps more surprising. In \cite{olander2024diagonal}, it is proven that the diagonal dimension of a curve $X$ of genus $\ge 1$ is two, while for $\bP^1$ the diagonal dimension is one. The techniques of \cite{olander2024diagonal} apply with only minor modification to show: $\ddim(\cX) = 2$ for $\cX$ a nonsingular projective orbifold curve over an algebraically closed field with coarse moduli space of genus $\ge 1$. In the case where $\cX$ has coarse moduli space $\bP^1$, the result is somewhat different and depends on whether or not the degree of $\omega_{\cX}$ is negative.

The study of $\DCoh(\cX)$ for $\cX$ with coarse moduli space $\bP^1$ was initiated in a different form by Geigle and Lenzing \cite{GL}. They study the representations of certain canonical algebras $A$ that can be presented as $kQ/I$ where $Q$ is a quiver and $I$ is an ideal of relations on its path algebra $kQ$. These algebras can also be obtained as $\End(E)$ for $E$ the tilting bundle on $\cX$ coming from a strong full exceptional collection of line bundles. Consequently, by \cite{BondalAssociative}*{Thm. 6.2} there is a derived equivalence $\DCoh(\cX) \simeq \db(\rm{mod}\:A)$. There is a trichotomy of these orbifold projective lines according to whether or not $\deg(\omega_{\cX})$ is positive, negative, or zero.\footnote{In the literature some authors use instead $\chi_A = -\deg(\omega_{\cX})$ to classify these algebras.}

\begin{customthm}{B}
[=\Cref{T:diagDimOrbCurve}] Let $\cX$ be a smooth and projective orbifold curve over an algebraically closed field. $\ddim(\cX) = 1$ if and only if $\deg(\omega_{\cX}) < 0$. Otherwise, $\ddim(\cX) = 2$. 
\end{customthm}

An interesting consequence is \Cref{C:hereditarytilt}, which states that the condition $\ddim(\cX) = 1$ completely characterises smooth and projective orbifold curves admitting a tilting bundle with hereditary endomorphism algebra. In the last section of the paper, we consider the Serre dimension and global dimensions. The same argument as in \cite{elagin2021three}*{Lem. 5.6} implies $\sdim(\cX) = 1$ and we record this as \Cref{C:sdualitydm}. Finally, we prove: 

\begin{customthm}{C}
[=\Cref{T:stabcond}, \Cref{C:orbifoldcurvegldim}] If $\cX$ is a smooth and projective complex orbifold curve, then there exist Bridgeland stability conditions on $\DCoh(\cX)$ with central charge factoring through the Chen-Ruan cohomology of $\cX$. Furthermore, if the coarse moduli space of $\cX$ has genus at least one, then $\gldim(\cX) = 1$.
\end{customthm}

Recall that in the case of $\cX$ with coarse moduli space $\bP^1$, the global dimension of $\DCoh(\cX)$ is one by \cite{OtaniGldim}.

\subsection*{Acknowledgements}

A.B. would like to thank Matthew Ballard for support and Pat Lank for early discussions and encouragement. A.R. thanks Andres Fernandez Herrero, Daniel Halpern-Leistner, and Hannah Dell for many enlightening conver\-sations and Maria Teresa Mata Vivas for her love and encourage\-ment. Furthermore, we thank Hannah Dell and Pat Lank for comments and corrections on a preliminary version of this paper.

\subsection*{Notation and conventions}
In most parts of the paper, $\cX$ denotes a nonsingular and projective orbifold curve defined over a field $k$ (see \Cref{D:orbifoldcurve} and the subsequent para\-graph). For certain statements these hypotheses are relaxed. The coarse moduli space morphism is denoted $\pi:\cX \to X$ and consequently $X$ is a curve over $k$. Our standard notation is for $\cX$ to have $n$ stacky points, $p_1,\ldots, p_n$, of orders $e_1,\ldots, e_n$ (at least 2), respectively. In \S3, we do not make any hypotheses on the field $k$ but rather that $\cX\to \Spec k$ is \emph{tame}, meaning that the order of the stabilizer groups is coprime to the characteristic of $k$. In \S4, we assume that $k$ is algebraically closed, while in \S5 we assume that $k = \bC$, though this is not strictly necessary for most parts. We sometimes write DM in place of Deligne-Mumford (e.g. DM stack). Throughout, all stacks are defined with respect to the \'{e}tale topology on $\Sch_{/k}$.

\section{Orbifold curves}

For later use, we collect some basic results about orbifold curves. The reader is invited to consult \cites{VoightZB,Taams,Behrend_2014} for more on this topic.

\subsection{Definition and first properties}

First, recall that by the Keel-Mori theorem a DM stack $\cX$ which is separated and finite type over a field $k$ admits a coarse moduli space $\pi:\cX\to X$. See \cites{alper2023stacks,ConradKM} for a modern account of the Keel-Mori theorem or the original article \cite{KeelMori}.

\begin{defn}
\label{D:orbifoldcurve}
    An \emph{orbifold curve} is a dimension one  nonsingular separated DM stack $\cX$ of finite type over a field $k$ with generically trivial stabilizers.
\end{defn}

This is the same as \cite{Taams}*{Def. 1.1.1}, but note that we call such objects orbifold curves rather than stacky curves. It is proven in Lemma 1.1.8 of \textit{loc. cit.} that the coarse moduli space of such an $\cX$ is a nonsingular (separated) curve over $k$. We say $\cX$ is \emph{complete} (or equivalently \emph{projective}) if $X$ is. Next, we recall basic results about orbifold curves. For $e \in \bN$, denote by $\mu_e$ the cyclic group scheme of order $e$. By \cite{Taams}*{Cor. 1.1.33} the automorphism groups of points of $\cX$ are all of the form $\mu_e$.

\begin{prop}
[\cite{Taams}*{Thm. 1.1.7}]
\label{P:localform}
    Let $\cX$ denote an orbifold curve and $p\in \cX$ be a closed point with stabilizer group $\mu_e$. There is an \'{e}tale morphism $V\to X$ with $\pi(p)$ in its image and a Cartesian diagram
    \begin{equation*}
        \begin{tikzcd}
            {[}U/\mu_e{]}\arrow[r]\arrow[d] & \cX\arrow[d,"\pi"]\\
            V\arrow[r]&X
        \end{tikzcd}
    \end{equation*}
    where $U$ is a nonsingular (possibly disconnected) curve over $k$. 
\end{prop}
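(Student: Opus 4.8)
The plan is to reduce to the étale-local structure theorem for tame Deligne–Mumford stacks and then to verify the curve-specific smoothness claim. Note first that since $\cX$ is Deligne–Mumford its stabilizers are finite and étale; as the stabilizer of $p$ is $\mu_e$, this forces $e$ to be coprime to $\mathrm{char}(k)$, so that $\mu_e$ is linearly reductive and $\cX$ is tame near $p$, which is what makes the whole argument run. By the local structure theorem for tame separated DM stacks (Abramovich–Olsson–Vistoli; in characteristic zero this follows from the Keel–Mori theorem \cite{KeelMori} together with a Luna-type slice argument), there exist an affine scheme $U$ equipped with a $\mu_e$-action, a $\mu_e$-fixed point $u\in U$ mapping to $p$, and a representable étale morphism $[U/\mu_e]\to\cX$ inducing an isomorphism on automorphism groups at $u$. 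The content is that, near a point, the only failure of $\cX$ to be a scheme is recorded by the stabilizer, and one slices transversally to the orbit to produce such a presentation.

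Second, I would pass to coarse spaces to obtain the Cartesian square. The coarse moduli space of $[U/\mu_e]$ is the geometric quotient $V := U/\mu_e$, and there is an induced morphism $V\to X$. Using the key property of tame stacks that coarse-space formation commutes with flat base change on the coarse space, together with the facts that $[U/\mu_e]\to\cX$ is representable, étale, and stabilizer-preserving at $u$, one checks (after shrinking $U$ equivariantly around $u$) that $V\to X$ is étale, that $\pi(p)$ lies in its image, and that the natural comparison map $[U/\mu_e]\to\cX\times_X V$ is an isomorphism. This yields the desired diagram.

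Finally I would verify that $U$ is a nonsingular curve. Since $[U/\mu_e]\to\cX$ is étale and $\cX$ is nonsingular of dimension one, the stack $[U/\mu_e]$ is nonsingular of dimension one. The atlas $U\to[U/\mu_e]$ is a $\mu_e$-torsor, and because $\mu_e$ is finite étale over $k$ this torsor is finite étale; concretely $U\times_{[U/\mu_e]}U\cong\mu_e\times U$, so remembering the stack automorphisms makes the atlas étale even over the image of $u$. Hence $U$ is étale over a nonsingular one-dimensional stack and is therefore a nonsingular curve over $k$, possibly disconnected.

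The main obstacle is the slice theorem of the first step: producing the equivariant neighborhood $U$ so that the coarse quotient $V=U/\mu_e$ is genuinely étale over $X$ (rather than merely quasi-finite and unramified at the single stacky point) and so that the square is Cartesian. For curves one can make this completely explicit, and I expect this is the cleanest route: after shrinking so that $p$ is the only stacky point over $\pi(p)$, the morphism $\cX\to X$ is étale-locally the $e$-th root stack along the reduced point $\pi(p)$, whose tautological presentation is $U=\Spec\big(\cO_V[z]/(z^e-t)\big)$ with $t$ a local uniformizer at $\pi(p)$ and $\mu_e$ acting by scaling $z$; here $U$ is manifestly a smooth cyclic cover of $V$ and $U/\mu_e = V$ is étale over $X$ by construction. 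Either route works, but isolating this local model is where the real content lies.
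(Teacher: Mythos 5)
The paper does not prove this proposition at all: it is imported verbatim from Taams's thesis (Thm.\ 1.1.7), so there is no in-paper argument to compare against. Your sketch is essentially sound, and in fact your two routes bracket what the cited sources do. The first route (reduce to the \'etale-local structure theorem for tame stacks, then descend to coarse spaces) is the more general argument: your observation that a DM stack with stabilizer $\mu_e$ forces $e$ prime to $\mathrm{char}(k)$, hence tameness, is correct and is exactly what makes $\mu_e$ linearly reductive and the Abramovich--Olsson--Vistoli theorem applicable; and your deduction that $U$ is a nonsingular curve because $U\to[U/\mu_e]\to\cX$ is a composite of \'etale maps is clean and correct. The second, explicit route --- identifying $\cX\to X$ \'etale-locally with the $e$-th root stack $\Spec(\cO_V[z]/(z^e-t))$ modulo $\mu_e$ scaling $z$ --- is essentially the proof given in Taams and in Voight--Zureick-Brown, obtained by linearizing the tame $\mu_e$-action on the completed local ring $k[[t]]$; what it buys is that $V\to X$ is \'etale and the square is Cartesian by construction, rather than by a posteriori descent.

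The one place your first route is genuinely thin is the passage from ``there exists a representable, \'etale, stabilizer-preserving $[U/\mu_e]\to\cX$'' to ``$V=U/\mu_e\to X$ is \'etale and the comparison map $[U/\mu_e]\to\cX\times_X V$ is an isomorphism.'' Commutation of coarse spaces with flat base change (AOV) only helps once you already know $V\to X$ is flat; what you actually need is the standard lemma that a fixed-point-reflecting \'etale morphism of tame DM stacks induces an \'etale morphism of coarse spaces and a Cartesian square (Abramovich--Vistoli, Lemma 2.2.3). You correctly flag this as the main obstacle, but as written ``one checks'' is doing real work there; either cite that lemma or take the AOV statement in the form that directly produces the \'etale $V\to X$ with $\cX\times_XV\cong[U/\mu_e]$, after which nothing remains to check. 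Also be careful that the root-stack description of the second route is logically very close to the proposition itself, so if you pursue that route you must actually carry out the linearization of the $\mu_e$-action on $\widehat{\cO}_{\cX,p}$ rather than quote the root-stack structure theorem, which would be circular.
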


\begin{rem}
\label{R:atlasbycurve}
By \Cref{P:localform}, for any closed point $p\in \cX$ there exists a nonsingular curve $U$ and an \'{e}tale morphism $U\to \cX$ with $p$ in its image. In particular, $\cX$ admits an \'etale atlas $C\to \cX$ from a nonsingular curve constructed by taking the disjoint union over a finite collection of such $U$.
\end{rem}

\begin{defn}
\label{D:divisor}
    A \emph{Weil divisor} on a stack $\cX$ is a formal $\bZ$-linear combination of irreducible closed codimension one substacks of $\cX$ defined over $k$. 
\end{defn}

When $\cX$ is an orbifold curve, a Weil divisor is a formal linear combination of $k$-rational points, some of which may have nontrivial automorphism groups. We consider sheaves on the \'{e}tale site of $\cX$, where $\cX$ is henceforth assumed to be an orbifold curve. Given $P \in \cX(k)$, we define the sheaf $\cO_{\cX}(P)$ on $\cX_{\text{\'{e}t}}$ as the dual of $\cI_P\subset \cO_{\cX}$, the ideal sheaf of $P$ in $\cX$. For a general Weil divisor $D$, $\cO_{\cX}(D)$ is defined by linearity; one can verify that $\cO_{\cX}(D)$ is a line bundle by passing to an \'{e}tale cover by a scheme.

For a line bundle $L$ on $\cX$, a rational section of $L$ is a section of $L$ over a Zariski open dense subscheme $V$ of $\cX$. Such a $V$ exists because $\cX$ is generically a smooth curve - e.g. on the complement of the stacky points. For any such $V$, we define the \emph{function field} of $\cX$ by $k(\cX) := \cO_{\cX,\xi}$, where $\xi$ is the generic point of $V$. Given a rational section $s$ of $L$, we define the divisor of $s$ by $\Div(s) = \sum_P v_P(s)\cdot P$, where each $P$ is a closed point and $v_P(s) \in \bZ$ is the valuation of $s$ in the \'{e}tale local ring of $\cX$ at $P$.

A Weil divisor is \emph{Cartier} if it is \'{e}tale locally of the form $\Div(f)$ for $f$ a rational function. A pair of Weil divisors $D$ and $D'$ are \emph{linearly equivalent} if there exists a rational function $f \in k(\cX)$ such that $D = D' + \Div(f)$.

\begin{lem}
[\cite{VoightZB}*{Lem. 5.4.5}]
\label{L:stackycurvedivisorlinebundle}
Let $\cX$ denote an orbifold curve.
\begin{enumerate}
    \item Every Weil divisor on $\cX$ is Cartier. 
    \item Every line bundle $L$ on $\cX_{\text{\'{e}t}}$ is isomorphic to $\cO_{\cX}(D)$ for some divisor $D$. $D$ can be taken to be $\Div(s)$, where $s$ is a nonzero rational section of $L$. 
    \item $\cO_{\cX}(D) \cong \cO_{\cX}(D')$ if and only if $D$ and $D'$ are linearly equivalent.
\end{enumerate}
\end{lem}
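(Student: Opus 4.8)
The plan is to reduce all three assertions to the classical divisor–line-bundle dictionary on nonsingular curves, exploiting that each is étale-local in nature and that $\cX$ is étale-locally a quotient $[U/\mu_e]$ of a nonsingular curve by \Cref{P:localform}, admitting a smooth atlas $C\to\cX$ from a nonsingular curve by \Cref{R:atlasbycurve}. I will lean on two standing tools. First, the canonical rational section $1_D$ of $\cO_\cX(D)$, namely the image of $1\in k(\cX)$ under the inclusion $\cO_\cX(D)\hookrightarrow k(\cX)$ of $\cO_\cX(D)$ as a subsheaf of the constant function-field sheaf, which satisfies $\Div(1_D)=D$. Second, the fact noted when $\cO_\cX(D)$ was defined, that it is a line bundle (verified by pulling back along a scheme cover). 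Granting these, part (1) is immediate: being Cartier is étale-local, so it suffices to produce locally a rational function $f$ with $\Div(f)=D$, and a local trivialisation $\cO_\cX(D)|_V\cong\cO_V$ carries $1_D$ to exactly such an $f$ with $\Div(f)=D|_V$. Equivalently, one may check principality after pulling back to the atlas $C$, where every local ring is a DVR and Weil divisors are automatically principal, and then descend.

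For part (2), given a line bundle $L$ I would first choose a nonzero rational section $s$. Such an $s$ exists because $\cX$ is generically a scheme: on the dense open complement $V$ of the stacky points $L|_V$ is a line bundle on an integral curve, hence generically trivial, so it admits a nonzero section over a Zariski-dense open. Setting $D:=\Div(s)$ — a Weil divisor of finite support — I would then define the sheaf morphism $\cO_\cX(D)\to L$ by $g\mapsto g\cdot s$, sending $1_D\mapsto s$. This is an isomorphism, which one checks étale-locally: after trivialising $L$ on a chart, $s$ becomes a rational function whose divisor matches $D$, so the map becomes multiplication by a local unit and is invertible. Hence $L\cong\cO_\cX(\Div(s))$.

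For part (3), the implication $(\Leftarrow)$ is formal: if $D=D'+\Div(f)$ then multiplication by $f$ gives an isomorphism $\cO_\cX(D')\xrightarrow{\sim}\cO_\cX(D)$. Conversely, an isomorphism $\cO_\cX(D)\cong\cO_\cX(D')$ furnishes a trivialisation of $\cO_\cX(D-D')\cong\cO_\cX(D)\otimes\cO_\cX(D')^{\vee}$; applying the section–divisor correspondence of part (2) to this trivial line bundle identifies its canonical section with a rational function $f$ satisfying $D-D'=\Div(f)$, so that $D$ and $D'$ are linearly equivalent.

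The hard part will be the bookkeeping at the stacky points. I must check that the function-field sheaf, the valuations $v_P$ appearing in $\Div$, and the section $1_D$ all behave correctly on the \'etale site of $\cX$ rather than the Zariski topology, and in particular that a uniformiser at a stacky point is only a \emph{semi-invariant} for the $\mu_e$-action, so that multiplicities computed on $\cX$ and on the cover $U$ differ by the stabiliser order $e$. Keeping these normalisations consistent — so that the maps constructed above are genuine isomorphisms and not isomorphisms up to the torsion introduced by $\mu_e$ — is where the real care is required; away from the stacky points every step reduces verbatim to the theory of divisors on nonsingular curves.
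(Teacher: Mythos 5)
The paper itself gives no proof of this lemma: it is quoted directly from \cite{VoightZB}*{Lem.~5.4.5}, so there is no in-paper argument to compare against. Your proposal reconstructs the standard proof --- embed $\cO_{\cX}(D)$ in the constant sheaf $k(\cX)$ so that it carries a canonical rational section $1_D$ with $\Div(1_D)=D$, and reduce each claim to the DVR picture on an \'etale chart --- and this is essentially the argument of Voight--Zureick-Brown, with the correct identification of where the stacky subtlety lives (uniformisers at $p_i$ are only semi-invariant, so multiplicities on $\cX$ and on the cover differ by $e_i$). Two small points worth making explicit: in (1) the closing ``and then descend'' is unnecessary, since Cartier is \emph{defined} here as \'etale-locally principal and the atlas $C\to\cX$ of \Cref{R:atlasbycurve} is already an \'etale cover on which every Weil divisor is Zariski-locally principal; and in (3) the forward implication silently uses the compatibility $\cO_{\cX}(D)\otimes\cO_{\cX}(D')^{\vee}\cong\cO_{\cX}(D-D')$, which is one more \'etale-local verification of the same kind you already flag (together with the harmless sign choice of whether linear equivalence is witnessed by multiplication by $f$ or $f^{-1}$). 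With those items checked the argument is complete and matches the cited source.
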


\begin{defn}
\label{D:degree}
    Let $\cX$ denote an orbifold curve and $p\in \cX(k)$. We define $\deg(p) = 1/\lvert G_p\rvert$, where $G_p$ is the stabilizer group of $p$. For a Weil divisor $D = \sum_{i} n_i\cdot p_i$, we define $\deg(D) = \sum_i n_i \cdot \deg(p_i) \in \bQ$.
\end{defn}

By \Cref{L:stackycurvedivisorlinebundle}, the degree of a divisor $D$ can be just as well computed by taking the divisor of a nonzero rational section of $\cO_{\cX}(D)$. In particular, if $H^0(\cX,\cO_{\cX}(D)) \ne 0$ then $\deg(D) \ge 0$. 

\begin{lem}
\label{L:degdefinition}
    $\deg$ extends uniquely to a homomorphism $\deg:K_0(\cX)\to \bZ$ which is given on the class of a vector bundle $E$ of rank $r$ by $\deg(E) = \deg(\bigwedge^rE)$.
\end{lem}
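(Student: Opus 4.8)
The plan is to construct the homomorphism in three stages: first pin down the degree on $\Pic(\cX)$, then promote it to all of $K_0(\cX)$ via the determinant, and finally observe uniqueness. For the first stage I would define $\deg$ on a line bundle $L$ by writing $L\cong \cO_\cX(D)$ with $D=\Div(s)$ (\Cref{L:stackycurvedivisorlinebundle}(2)) and setting $\deg(L):=\deg(D)$ in the sense of \Cref{D:degree}. The real content here is well-definedness: by \Cref{L:stackycurvedivisorlinebundle}(3) any two choices of $D$ differ by $\Div(f)$ for a rational function $f$, so I must show $\deg(\Div(f))=0$ for every nonzero $f\in k(\cX)$. This is where the coarse moduli map $\pi\colon \cX\to X$ enters. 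Since $\cX$ and $X$ agree away from the stacky points, $k(\cX)=k(X)$ and $f$ is equally a rational function on the nonsingular projective curve $X$. Using the local model of \Cref{P:localform}, at a stacky point $p$ of order $e$ over $x\in X$ the coarse map is totally ramified of degree $e$, so the \'etale-local valuations satisfy $v_p(f)=e\cdot v_x(f)$; combined with $\deg(p)=1/e$ this makes the contribution of $p$ to $\deg_\cX(\Div f)$ equal the contribution of $x$ to $\deg_X(\Div f)$, while the two agree at the non-stacky points automatically. Hence $\deg_\cX(\Div f)=\deg_X(\Div f)=0$, the final equality being the classical vanishing of principal-divisor degrees on a projective curve. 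Together with $\cO_\cX(D)\otimes \cO_\cX(D')\cong \cO_\cX(D+D')$ and the $\bZ$-linearity of $\deg$ on divisors (\Cref{D:degree}), this shows $\deg$ descends to a homomorphism $\Pic(\cX)\to \bQ$.

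For the second stage I would use that $\cX$ is regular of dimension one to reduce $K_0(\cX)$ to vector bundles. Concretely, every coherent sheaf $F$ fits into a short locally free resolution $0\to V_1\to V_0\to F\to 0$: start from a surjection $V_0\twoheadrightarrow F$ from a vector bundle (the resolution property, which holds for orbifold curves and can be verified on the \'etale atlas $C\to \cX$ of \Cref{R:atlasbycurve}); the kernel $V_1$ is a subsheaf of the locally free $V_0$, hence torsion-free, hence locally free, since local freeness may be checked after pulling back along the smooth curve atlas. Therefore $K_0(\cX)$ is generated by classes of vector bundles, with relations coming from short exact sequences of bundles. The determinant $E\mapsto \det(E)=\bigwedge^{\rank E}E$ is multiplicative on such sequences (for $0\to E'\to E\to E''\to 0$ one has $\det E\cong \det E'\otimes \det E''$), so it defines a homomorphism $\det\colon K_0(\cX)\to \Pic(\cX)$. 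Composing with the degree on $\Pic(\cX)$ from the first stage yields the desired homomorphism $K_0(\cX)\to \bQ$, whose value on a rank-$r$ bundle $E$ is $\deg(\bigwedge^r E)$ by construction.

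Finally, uniqueness is immediate: vector bundle classes generate $K_0(\cX)$ and the formula $\deg(E)=\deg(\bigwedge^r E)$ prescribes the value on each generator, so any homomorphism compatible with this formula is determined. I expect the main obstacle to be the well-definedness in the first stage—specifically the ramification identity $v_p(f)=e\cdot v_x(f)$ that reduces $\deg(\Div f)=0$ to the classical statement on $X$—since once this and the resolution property are in hand, the additivity and uniqueness are formal.
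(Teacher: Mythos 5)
Your proof is correct and takes essentially the same route as the paper: reduce $K_0(\cX)$ to vector bundle classes via the resolution property and deduce additivity from the multiplicativity of the determinant on short exact sequences. The only difference is that you spell out the well-definedness of $\deg$ on $\Pic(\cX)$ (i.e.\ $\deg(\Div f)=0$ via the ramification identity $v_p(f)=e\cdot v_x(f)$), which the paper instead takes as given from \Cref{L:stackycurvedivisorlinebundle} and the remark following \Cref{D:degree}.
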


\begin{proof}
    $K_0(\cX)$ is the Grothendieck group of coherent sheaves on $\cX$, however since $\cX$ satisfies the resolution property \cite{Totaroresolution}*{Thm. 1.2} all elements of $K_0(\cX)$ are of the form $[E]$ for $E$ vector bundle. Thus, it suffices to prove that $\deg$ is additive on short exact sequences of vector bundles. Consider such a sequence $0\to E'\to E\to E''\to 0$ where $E$ is of rank $r = r'+ r''$. Since $\bigwedge^rE \cong \bigwedge^{r'}E'\otimes \bigwedge^{r''}E''$ we have $\deg(E) = \deg(E') + \deg(E'')$. 
\end{proof}

Finally, we recall a result which computes $\pi_*\cO_{\cX}(D)$ by rounding down coefficients.

\begin{prop}
[\cite{VoightZB}*{Lem. 5.4.7}]
\label{P:rounddown}
    Let $\cX$ be a complete orbifold and let $D = \sum_{i=1}^nm_i\cdot p_i$ be a Weil divisor on $\cX$, where the $p_i\in \cX(k)$. $\pi_*\cO_{\cX}(D) = \cO_{X}(\lfloor D\rfloor)$, where 
    \[
    \lfloor D\rfloor = \sum_{i=1}^n \left\lfloor \frac{m_i}{\lvert G_{p_i}\rvert} \right\rfloor \cdot \pi(p_i). 
    \]
\end{prop}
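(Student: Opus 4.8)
The plan is to reduce the claimed isomorphism to a computation of invariants near each stacky point. Since $\pi\colon\cX\to X$ is an isomorphism over the complement of the finitely many stacky points, and both $\pi_*\cO_{\cX}(D)$ and $\cO_X(\lfloor D\rfloor)$ are coherent sheaves on $X$, it suffices to verify the equality étale-locally on $X$, and it is canonical enough to glue. Over the nonstacky locus there is nothing to prove: there $|G_{p_j}|=1$, so $\lfloor m_j/|G_{p_j}|\rfloor = m_j$ and $\pi$ is an isomorphism, so both sheaves are $\cO_X(\sum m_j\pi(p_j))$. Hence I may work étale-locally near a single stacky point $p=p_i$ with stabilizer $G_p=\mu_e$, $e=e_i$, where only the term $m_i p_i$ of $D$ contributes and $\cO_{\cX}(D)\cong\cO_{\cX}(m_i p_i)$.

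First I would invoke \Cref{P:localform} to pass to the local model: étale-locally over a suitable $V\to X$, the morphism $\pi$ becomes $[U/\mu_e]\to U/\mu_e=V$, and after further localization I may take $U=\Spec k[u]$ with $\mu_e$ acting so that $u$ is a weight vector. Under the identification of $\mu_e$-representations with $\bZ/e$-graded modules (valid as $\mu_e$ is diagonalizable), faithfulness of the action forces the weight of $u$ to generate $\bZ/e$, and I normalize it to $1$. Then $V=\Spec k[u]^{\mu_e}=\Spec k[t]$ with $t=u^e$, the stacky point $p$ corresponds to $u=0$, and $\pi(p)$ to $t=0$. Here I would use that the orbifold curve is tame so that formation of $\mu_e$-invariants is exact and compatible with the flat base change $V\to X$; for a Deligne--Mumford orbifold curve this is automatic, since the stabilizers $\mu_e$ are then étale and $e$ is invertible in $k$, whence $k[t]\hookrightarrow k[u]$ is finite free of rank $e$.

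Next I would identify $\cO_{\cX}(m p)$ in this model. By definition $\cO_{\cX}(p)=\cI_p^{\vee}$ (the dual of the ideal sheaf), so $\cO_{\cX}(mp)=\cI_p^{-m}$, and étale-locally $\cI_p$ is the equivariant ideal $(u)\subset k[u]$. Thus $\cO_{\cX}(mp)$ is the equivariant fractional ideal $u^{-m}k[u]\subset k(u)$, with $u^j$ in graded degree $j\bmod e$. Since $\pi_*$ corresponds to taking $\mu_e$-invariants, i.e.\ the degree-zero part of the grading, I compute
\[
    \bigl(u^{-m}k[u]\bigr)^{\mu_e} = \bigoplus_{\substack{j\ge -m\\ e\mid j}} k\cdot u^{j} = t^{-\lfloor m/e\rfloor}\,k[t],
\]
using that the least multiple of $e$ which is $\ge -m$ equals $-e\lfloor m/e\rfloor$. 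This is exactly $\cO_V(\lfloor m/e\rfloor\cdot\pi(p))$, giving the local contribution $\lfloor m_i/|G_{p_i}|\rfloor\cdot\pi(p_i)$ to $\lfloor D\rfloor$. Assembling these local contributions with the trivial ones away from the stacky points yields the stated formula.

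The main obstacle is the base-change compatibility invoked in the second step: justifying that coarse-moduli pushforward commutes with the étale base change used to reach the local model, equivalently that formation of invariants is compatible with flat base change and exact. This is clean precisely because the orbifold curve is tame (the Deligne--Mumford hypothesis forces the stabilizers $\mu_e$ to be étale, hence $e$ invertible in $k$), so the invariants functor is exact and the local computation globalizes. A secondary point demanding care is the grading-and-floor bookkeeping, namely correctly pinning down the equivariant structure of $\cI_p$ and the least invariant exponent $\ge -m$.
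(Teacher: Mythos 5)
The paper does not prove this proposition; it is quoted directly from \cite{VoightZB}*{Lem.\ 5.4.7}. Your argument is correct and is essentially the standard proof given in that reference: reduce via \Cref{P:localform} to the local model $[\Spec k[u]/\mu_e]\to\Spec k[u^e]$ (using tameness so that $\pi_*$ is exact, is computed by $\mu_e$-invariants, and commutes with the \'etale base change $V\to X$), identify $\cO_{\cX}(mp)$ with the equivariant fractional ideal $u^{-m}k[u]$, and observe that its invariants are $t^{-\lfloor m/e\rfloor}k[t]$. The only point I would tighten is the gluing step: rather than appealing to the local identifications being ``canonical enough,'' note that both $\pi_*\cO_{\cX}(D)$ and $\cO_X(\lfloor D\rfloor)$ are canonically subsheaves of the constant sheaf $k(X)=k(\cX)$, so it suffices to check equality of subsheaves \'etale-locally, which is exactly your computation.
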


\subsection{Homological properties}

Serre duality holds for tame smooth projective Deligne-Mumford stacks. An elegant and general way of phrasing this is in terms of Serre functors, as introduced in \cite{BondalKapranov}. $-\otimes_{\cO_{\cX}} \omega_{\cX}[n]$ is a Serre functor on $\DCoh(\cX)$, where $\omega_{\cX}$ is the dualizing line bundle of $\cX$ and $n = \dim \cX$; see \cite{GeometricitiyBerghLuntsSchnurer}*{Thm. 6.4}. In particular, one has natural isomorphisms
\[
    \RHom(E,F) \cong \RHom(F,E\otimes \omega_{\cX}[n])^\vee \quad \text{for all } E,F\in \Coh(\cX).
\]
Taking cohomology objects recovers the classical versions of Serre duality.

\begin{lem}
\label{L:homologicaldimension}
If $\cX$ is a tame smooth and projective Deligne-Mumford stack of dimension $n$, then $\Coh(\cX)$ has homological dimension $n$. Moreover, $H^i(\cX,F) = 0$ for all $F\in \Coh(\cX)$ and $i>\dim \cX$.
\end{lem}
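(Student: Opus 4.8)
The plan is to deduce both assertions directly from the Serre duality recalled just above the statement, together with the standard fact that for a smooth Noetherian stack the bounded derived category of the heart coincides with $\DCoh(\cX)$, so that the Serre functor $S = -\otimes_{\cO_\cX}\omega_\cX[n]$ acts on $\db(\Coh(\cX))$. Concretely, for $E,F\in\Coh(\cX)$ and $i\in\bZ$, I would first record the unwinding of $\RHom(E,F)\cong\RHom(F,E\otimes\omega_\cX[n])^\vee$ into a natural isomorphism
\[
\Hom_{\db(\Coh(\cX))}(E,F[i]) \cong \Ext^{\,n-i}(F,E\otimes\omega_\cX)^\vee,
\]
obtained by taking $H^i$ on both sides and using that $k$-linear duality sends $H^i$ to $H^{-i}(-)^\vee$ while the shift $[n]$ contributes the $n$. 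Everything else is a formal consequence of this identity.

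For the upper bound on homological dimension and the ``Moreover'' clause at once: if $i>n$ then $n-i<0$, and since $F$ and $E\otimes\omega_\cX$ both lie in the heart $\Coh(\cX)$, the group $\Ext^{\,n-i}(F,E\otimes\omega_\cX)=\Hom_{\db}(F,(E\otimes\omega_\cX)[n-i])$ vanishes, as negative Ext groups between objects of the heart are zero. Hence $\Hom_{\db}(E,F[i])=0$ for all $i>n$ and all $E,F$, giving $\rm{hd}(\Coh(\cX))\le n$. Specializing $E=\cO_\cX$ and recalling $H^i(\cX,F)=\Ext^i(\cO_\cX,F)$ yields $H^i(\cX,F)=0$ for all $i>\dim\cX$, which is the second assertion.

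For the lower bound I would exhibit a single nonzero degree-$n$ Ext group: taking $E=\cO_\cX$ and $F=\omega_\cX$, the displayed identity gives $\Ext^n(\cO_\cX,\omega_\cX)\cong\Hom(\omega_\cX,\omega_\cX)^\vee$, which is nonzero because $\omega_\cX$ is a nonzero line bundle and so $\id_{\omega_\cX}$ is a nonzero endomorphism. Thus $\rm{hd}(\Coh(\cX))\ge n$, and combining with the upper bound proves $\rm{hd}(\Coh(\cX))=n$. No connectedness hypothesis is needed here, only that $\cX$ is nonempty so that $\omega_\cX\ne 0$.

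The argument is essentially formal once Serre duality is available, so the only real point requiring care is the availability of Serre duality in this generality; this is precisely where tameness and smooth-properness of $\cX$ enter, through the cited description of the Serre functor. As an alternative route for the cohomological vanishing alone, one could avoid duality by pushing forward along the coarse moduli morphism $\pi:\cX\to X$: for a tame stack $R\pi_*=\pi_*$ is exact, so $H^i(\cX,F)=H^i(X,\pi_*F)$, which vanishes for $i>\dim X=n$ by Grothendieck's vanishing theorem on the Noetherian scheme $X$. I would nonetheless keep the Serre-duality route as the main line, since it also delivers the homological dimension in one stroke.
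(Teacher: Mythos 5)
Your proposal is correct and follows essentially the same route as the paper: Serre duality gives $\Ext^k(E,F)\cong\Ext^{n-k}(F,E\otimes\omega_\cX)^\vee$, which vanishes for $k>n$ since negative Ext groups between heart objects are zero; the lower bound comes from $\Ext^n(\cO_\cX,\omega_\cX)\ne 0$; and the cohomological vanishing is the special case $E=\cO_\cX$ via $H^i(\cX,F)\cong\Ext^i(\cO_\cX,F)$. The coarse-moduli alternative you sketch is a valid independent check of the second assertion but is not needed.
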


\begin{proof}
For all $F,G\in \Coh(\cX)$, $\Ext^k_{\cO_{\cX}}(F,G) \cong \Ext^{n-k}_{\cO_{\cX}}(G,F\otimes \omega_{\cX})^\vee = 0$ for all $k \ge n+1.$ Thus, $\rm{hd} \Coh(\cX) \le n$. On the other hand, $\Ext^n_{\cO_{\cX}}(\cO_{\cX},\omega_{\cX}) = \Hom(\cO_{\cX},\cO_{\cX})^\vee \cong k$ so that $\rm{hd} \Coh(\cX) = n$. For the second claim, apply the natural isomorphism $H^i(\cX,\cF)\cong \Ext^i_{\cO_{\cX}}(\cO_{\cX},\cF)$ for all $i\in \bZ$. 
\end{proof}

\begin{lem}
\label{L:summand}
For an orbifold curve $\cX$, any $E \in \DCoh(\cX)$ is isomorphic to $\bigoplus_{i\in \bZ} H^i(E)[-i]$ where all $H^i(E)\in \Coh(\cX)$ and all but finitely many are zero.
\end{lem}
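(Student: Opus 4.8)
The plan is to exploit that $\Coh(\cX)$ is hereditary, that is, has homological dimension one: this is precisely \Cref{L:homologicaldimension} applied with $n = 1$, since an orbifold curve has dimension one. In any hereditary abelian category every object of the bounded derived category is \emph{formal}, and I would establish this by the standard argument of repeatedly splitting off the bottom cohomology. First I would note that since $E \in \DCoh(\cX)$ is represented by a bounded complex of coherent sheaves, only finitely many of its cohomology objects $H^i(E)$ are nonzero; say they are concentrated in degrees $a \le i \le b$. The proof then runs by induction on the number of nonzero cohomology objects. The base case, in which the cohomology is concentrated in a single degree $a$, is immediate from the $t$-structure: such an $E$ is canonically isomorphic to $H^a(E)[-a]$.

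For the inductive step I would invoke the canonical truncation triangle
\[
\tau_{\le a}E \longrightarrow E \longrightarrow \tau_{\ge a+1}E \xrightarrow{\ \delta\ } (\tau_{\le a}E)[1].
\]
Because $H^i(E) = 0$ for $i < a$, the bottom truncation collapses to $\tau_{\le a}E \cong H^a(E)[-a]$, while the inductive hypothesis gives $\tau_{\ge a+1}E \cong \bigoplus_{i\ge a+1} H^i(E)[-i]$. The connecting morphism $\delta$ therefore lies in
\[
\Hom_{\DCoh(\cX)}\!\Big(\textstyle\bigoplus_{i\ge a+1}H^i(E)[-i],\; H^a(E)[1-a]\Big) \;\cong\; \bigoplus_{i\ge a+1}\Ext^{\,1-a+i}_{\cO_\cX}\!\big(H^i(E),H^a(E)\big).
\]

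Here is the crux of the argument: for every index appearing in the sum we have $i \ge a+1$, so the $\Ext$-degree satisfies $1 - a + i \ge 2$, and each such group vanishes by the homological dimension bound of \Cref{L:homologicaldimension}. Consequently $\delta = 0$, the distinguished triangle splits, and $E \cong H^a(E)[-a] \oplus \tau_{\ge a+1}E \cong \bigoplus_{i} H^i(E)[-i]$, which closes the induction. I do not expect a genuine obstacle here: the only substantive input is the vanishing of $\Ext^{\ge 2}$, which is exactly the heredity already recorded, and the remaining work is the bookkeeping of the induction together with the elementary fact that a distinguished triangle with vanishing connecting morphism splits into a direct sum.
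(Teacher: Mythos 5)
Your proof is correct and is essentially the argument the paper invokes: the paper simply cites the standard splitting argument of Huybrechts (Cor.\ 3.15) together with \Cref{L:homologicaldimension}, which is exactly your induction via truncation triangles and the vanishing of $\Ext^{\ge 2}$ in a hereditary category.
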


\begin{proof}
    This follows from the argument of \cite{HuybrechtsFM}*{Cor. 3.15}, and \Cref{L:homologicaldimension}.
\end{proof}

A sheaf $T$ on a Deligne-Mumford stack $\cX$ is called \emph{torsion} if for any \'{e}tale morphism $U\to \cX$ with $U$ a scheme $T(U)$ is a torsion $\cO_{\cX}(U)$-module.

\begin{lem}
\label{L:decomp}
    Suppose $\cX$ is an orbifold curve and consider $E \in \Coh(\cX)$. One has a decomp\-osition $E \cong T\oplus F$ where $T$ is torsion and $F$ is locally free.
\end{lem}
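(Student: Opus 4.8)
The plan is to transplant the classical structure theorem for coherent sheaves on a smooth curve to the stacky setting, working directly on $\cX$ and invoking its local presentation rather than passing to the coarse space (since $\pi^*\pi_*$ is not the identity, the coarse space is unhelpful here). First I would construct the torsion subsheaf $T\subseteq E$. On any étale chart $U\to\cX$ with $U$ an affine scheme, the submodule of torsion sections of $E(U)$ is well-defined and functorial, so these patch to a coherent subsheaf $T\subseteq E$; by construction the quotient $F:=E/T$ is torsion-free. This produces a short exact sequence $0\to T\to E\to F\to 0$ in $\Coh(\cX)$, and it remains to identify $F$ and to split the sequence.

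Next I would show that $F$ is locally free. Using the étale atlas $c\colon C\to\cX$ from a nonsingular curve (\Cref{R:atlasbycurve}), the pullback $c^*F$ is coherent and torsion-free on the smooth curve $C$ — étale morphisms are flat, and flat pullback along a map with reduced finite fibres introduces no torsion — hence $c^*F$ is locally free, as torsion-free equals locally free over the Dedekind local rings of $C$. Since local freeness (equivalently, flatness) of a coherent sheaf may be checked after the faithfully flat base change $c$, I conclude that $F$ is locally free on $\cX$.

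Finally I would split the sequence by proving $\Ext^1_{\cX}(F,T)=0$. As $F$ is locally free, $\Ext^1_{\cX}(F,T)\cong H^1(\cX,\sheafhom(F,T))\cong H^1(\cX, F^{\dual}\otimes T)$, and $F^{\dual}\otimes T$ is again torsion, hence supported on the finite set $\Supp T$ of closed points. Such a sheaf is cohomologically trivial in positive degrees: by \Cref{P:localform} its support is a disjoint union of (thickened) residual gerbes $B\mu_e$, and because $\cX$ is Deligne–Mumford the groups $\mu_e$ are étale, hence linearly reductive, so $H^{>0}(B\mu_e,-)=0$. Therefore the extension splits and $E\cong T\oplus F$.

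The main obstacle is the pair of local-to-global passages on the stack: that torsion-freeness descends to local freeness of $F$, and that sheaves with zero-dimensional support carry no higher cohomology. Both are forced by the explicit local structure of \Cref{P:localform}, and the second is precisely where the Deligne–Mumford (equivalently tame) hypothesis is used, since it is the linear reductivity of the stabilizer gerbes that kills $H^1$; I would take care to note that for a DM orbifold curve the orders $e_i$ are automatically coprime to the characteristic, so this reductivity is available without any extra assumption.
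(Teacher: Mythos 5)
Your proof is correct and follows essentially the same route as the paper's: take the maximal torsion subsheaf, check that the quotient $F$ is locally free by pulling back to the étale atlas of \Cref{R:atlasbycurve}, and split the sequence by showing $\Ext^1(F,T)\cong H^1(\cX,F^\vee\otimes T)=0$ because $F^\vee\otimes T$ has zero-dimensional support. The only cosmetic differences are that the paper phrases the local-freeness step via purity of dimension one rather than torsion-freeness, and cites \Cref{L:homologicaldimension} for the $H^1$-vanishing where you argue via the local structure of \Cref{P:localform} and linear reductivity of the stabilizers.
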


\begin{proof}
    Consider an \'{e}tale atlas $\pi:U\to \cX$ where $U$ is a nonsingular curve. Let $T$ denote the maximal torsion subsheaf of $E$ and put $F:= E/T$. $F$ is pure of dimension one and as a consequence so is $\pi^*(F)$. However, a pure dimension one sheaf on $U$ is torsion free and thus locally free, since the local rings of $U$ are principal ideal domains. Therefore, $F$ is locally free. Next, the dimension of the support of $F^\vee \otimes T$ equals the dimension of the support of $\pi^*(F^\vee \otimes T) = \pi^* F^\vee \otimes \pi^*T$. Since $\pi^*T$ is a torsion sheaf on $U$, it follows that $\dim \Supp (F^\vee\otimes T) = 0$. Because $F$ is locally free, $H^1(\cX,F^\vee\otimes T) \cong \Ext^1(F,T) = 0$ by \Cref{L:homologicaldimension}. Thus, the exact sequence $0\to T\to E\to F\to 0$ is split.
\end{proof}

\subsection{Stability on orbifold curves}

We recall the definition of slope stability for coherent sheaves on an orbifold curve and prove some basic properties that will be used in the sequel.

\begin{defn}
\label{D:generatingsheaf}
Given a tame Deligne-Mumford stack $\cX$, with coarse moduli space $\pi: \cX \to X$, a locally free sheaf $\cE$ is called a \emph{generating sheaf} if for all $\cF\in \QCoh(\cX)$ the adjunction morphism $\pi^*(\pi_* \cH om(\cE,\cF)) \otimes \cE\to \cF$ is surjective. 
\end{defn}

By \cite{Taams}*{Thm. 1.3.13}, every orbifold curve $\cX$ admits a generating sheaf. See also \cite{Kresch}*{Cor. 5.4} for a more general existence result. Taams \cite{Taams} introduces a \emph{standard} generating sheaf on a complete orbifold curve $\cX$ given by 
\begin{equation}
\label{E:generatingsheaf}
        \cE_{\rm{std}} = \bigotimes_{p_i}\bigoplus_{j=0}^{e_i-1} \cO(\tfrac{j}{e_i}\cdot p_i) \oplus \bigotimes_{p_i} \bigoplus_{j=0}^{e_i-1} \cO(\tfrac{-j}{e_i}\cdot p_i).
\end{equation}

As in the case of integral schemes, we define the \emph{rank} of a coherent sheaf $E$ on $\cX$ to be $\rank(E) = \dim_{k(\cX)} E_\xi$ where $\xi$ is the generic point of any open subscheme of $\cX$. For the rest of the paper, we will use the following notation and hypotheses unless otherwise specified:

\begin{notn} 
    \label{N:orbifoldcurve}
        Let $\cX$ denote a complete tame nonsingular orbifold curve and for $n\ge 0$ let $\{p_1,\ldots, p_n\}$ denote the set of stacky points, i.e. the set of closed points of the stack $\cX$ for which the automorphism group is nontrivial (if $n=0$ this set is empty). Let $e_i := \lvert G_{p_i}\rvert$. We will write $\pi:\cX\to X$ for the coarse moduli space map.
\end{notn}

\begin{defn}
\label{D:slopestability}
    We define the \emph{slope} of a nonzero $E\in \Coh(\cX)$ by 
    \[
        \mu(E) = 
        \begin{cases}
            \deg(E)/\rank(E)& \rank(E) > 0 \\
            \infty& \text{else}.
        \end{cases}
    \]
    Here, degree is defined as in \Cref{L:degdefinition}.\footnote{This definition is sensible since the only non-zero rank $0$ sheaves on $\cX$ are torsion sheaves all of which have positive degree by \cite{Taams}*{Thm 1.2.26}.} We call $E$ \emph{stable (resp. semistable)} if for all proper subsheaves $0\subsetneq F\subsetneq E$ one has $\mu(F) < \mu(E)$ (resp. $\mu(F) \le \mu(E)$).
\end{defn}  

The next proposition shows that slope stability as defined in \Cref{D:slopestability} gives rise to Harder-Narasimhan (HN) filtrations as in the classical case.

\begin{prop}
\label{P:stability}
    For every nonzero $F\in \Coh(\cX)$ there is a filtration $0 = F_0 \subsetneq F_1 \subsetneq \cdots \subsetneq F_n = F $ such that $F_i/F_{i-1}$ is semistable for each $1\le i \le n$ and such that $\mu(F_{i+1}/F_{i}) < \mu(F_i/F_{i-1})$ for all $1\le i\le n$.
\end{prop}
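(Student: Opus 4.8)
The plan is to establish the existence and uniqueness of the Harder–Narasimhan filtration by mimicking the classical argument for coherent sheaves on a smooth projective curve, which works once one has the two standard ingredients: the slope function takes discrete values bounded appropriately on subsheaves, and subsheaves of a fixed sheaf with bounded slope form a suitable (Noetherian/boundedness) family. Since $\cX$ is a tame orbifold curve with coarse moduli map $\pi\colon\cX\to X$, I would reduce these facts to the corresponding statements on the smooth projective curve $X$ via $\pi$. First I would record that $\Coh(\cX)$ is Noetherian and that ranks are nonnegative integers, so any nonzero $F$ has a rank that bounds the length of a chain of subsheaves with strictly increasing rank.

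The key analytic input is that the slopes $\mu(F')$ of subsheaves $F'\subseteq F$ are bounded above and take values in a discrete subset of $\bQ$. Degrees on $\cX$ lie in $\tfrac{1}{N}\bZ$ where $N=\operatorname{lcm}(e_1,\ldots,e_n)$ (by \Cref{D:degree}), and ranks are bounded by $\rank F$, so the set of possible slopes of subsheaves of fixed rank is discrete; the needed upper bound on $\deg(F')/\rank(F')$ follows from boundedness of the family of subsheaves of $F$. I would obtain this boundedness by pushing forward along $\pi$: for a subsheaf $F'\subseteq F$, its saturation and the torsion-free quotient behave well under $\pi_*$, and $\deg_{\cX}$ is comparable to $\deg_X$ of the pushforward up to the rounding described in \Cref{P:rounddown} and the fixed correction terms coming from the finitely many stacky points. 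Because $X$ is an honest smooth projective curve, the classical fact that subsheaves of $\pi_*F$ have bounded degree transfers back to give an upper bound for $\mu(F')$ over all $F'\subseteq F$.

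With a maximal-slope, maximal-rank destabilizing subsheaf $F_1$ shown to exist (take $F_1$ to realize the supremum of slopes, and among those of largest rank), I would verify in the standard way that $F_1$ is semistable, that it is unique, and that $\mu(F/F_1)<\mu(F_1)$; the seesaw/additivity of $\deg$ and $\rank$ (\Cref{L:degdefinition}) makes these the same elementary inequalities as in the curve case. Iterating on $F/F_1$, which has strictly smaller rank, terminates and yields the filtration $0=F_0\subsetneq F_1\subsetneq\cdots\subsetneq F_n=F$ with semistable quotients of strictly decreasing slope.

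The main obstacle is the boundedness step: proving that $\{\mu(F')\colon 0\neq F'\subseteq F\}$ is bounded above. On a scheme curve this is classical, but on $\cX$ one must control how the slope interacts with the stacky structure, where degrees are rational and the pushforward $\pi_*$ only sees the rounded-down part of a fractional divisor. I expect the cleanest route is to fix a generating sheaf (e.g. $\cE_{\mathrm{std}}$ from \eqref{E:generatingsheaf}) or use the étale atlas $C\to\cX$ from \Cref{R:atlasbycurve}, pull back to the honest curve $C$ where boundedness is classical, and then descend the bound, noting that the finitely many correction terms at the stacky points shift slopes only by a bounded amount. The remaining verifications (semistability of $F_1$, strict inequality of slopes, termination) are then routine consequences of additivity of degree and rank and cause no difficulty.
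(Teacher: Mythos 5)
Your proposal is essentially correct but follows a genuinely different route from the paper. You run the classical Harder--Narasimhan argument directly: Noetherianity, discreteness of slopes in $\tfrac{1}{N}\bZ$, boundedness of slopes of subsheaves, then extraction of a maximal destabilizing subsheaf and induction on rank. The paper instead invokes Nironi's existence theorem for HN filtrations with respect to the modified slope $\mu_{\cE}$ attached to a generating sheaf, and its entire effort goes into verifying that for the standard generating sheaf $\cE_{\rm std}$ one has $d_{\cE}(F)/\rk(\cE) = \deg(F)$, so that $\mu_{\cE}$ coincides with the naive slope $\deg/\rank$; the torsion part is then split off via \Cref{L:decomp}. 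Your approach is more self-contained and avoids the external citation, at the cost of having to carry out the boundedness step yourself; the paper's approach outsources boundedness to Nironi but must do a nontrivial degree computation on line bundles $\cO_{\cX}(n\cdot p)$ to match the two slope functions. One caution on your boundedness step: of your two suggested routes, only the pushforward to $X$ works as stated. The \'etale atlas $C\to\cX$ of \Cref{R:atlasbycurve} is a disjoint union of non-proper curves, so there is no degree theory or classical boundedness to pull back from. The pushforward route does go through: since $\cX$ is tame, $\pi_*$ is exact on coherent sheaves, so $E\mapsto \deg_X(\pi_*E)$ is additive and the comparison $\deg_{\cX}(E)=\deg_X(\pi_*E)+\sum_p\tfrac{1}{e_p}\sum_i i\cdot m_{p,i}(E)$ can be checked on line bundles and extended by additivity, giving $0\le \deg_{\cX}(E)-\deg_X(\pi_*E)\le \rk(E)\sum_p\tfrac{e_p-1}{e_p}$; combined with the classical bound on degrees of subsheaves of $\pi_*F$ on $X$ this yields the required upper bound on $\mu(F')$. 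You should also say a word about torsion (the maximal torsion subsheaf, of slope $\infty$, is the first HN step), which is routine. With these points made precise, your argument is a valid alternative proof.
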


\begin{proof}
    Fix a generating sheaf $\cE$ on $\cX$ and a very ample line bundle $\cO_X(1)$ on $X$. Following \cite{olsonStarr}, Nironi introduces a functor $F_{\cE}:\QCoh(\cX)\to \QCoh(X)$ given by $F_{\cE}(G) = \pi_*\cH om(\cE , G)$; see \cite{Nironimodulispaces}. $F_{\cE}$ restricts to a functor $\Coh(\cX)\to \Coh(X)$. Furthermore, Nironi defines the modified Hilbert polynomial of $G \in \Coh(\cX)$ by 
        \[
            P_{\cE}(G,m) = \chi(\cX,G\otimes \cE^\vee \otimes \pi^*\cO_X(m)) = \chi (X , \pi_*\cH om(\cE , G)(m)).
        \]
    Since $X$ is a curve, $P_{\cE}(G,m)$ is a linear polynomial and one can compute by \cite{Taams}*{Thm. 1.3.20} that
    \begin{equation}
        \label{E:hilbpoly}
            P_{\cE}(G,m) = \rk G \cdot \rk \cE \cdot \deg\cO_X(1) \cdot m + d_{\cE}(G) + \rk G \cdot C_\cE
    \end{equation}
    where $d_{\cE}(G) = \deg F_{\cE}(G) - \rk G\cdot \deg F_{\cE}(\cO_{\cX})$ and $C_{\cE}$ is a constant depending only on $\cE$. In \cite{Nironimodulispaces}, stability of coherent sheaves is defined using the slope of the Hilbert polynomial, namely the constant term divided by the coefficient of $m$. However, by \eqref{E:hilbpoly} this defines an equivalent notion of stability as the function $\mu_{\cE}(F) = d_{\cE}(F)/(\rank(\cE)\cdot \rk(F))$. By \cite{Nironimodulispaces}*{Thm. 3.22}, every locally free sheaf $F$ has an HN filtration with respect to $\mu_{\cE}$. Consequently, the proposition will be proven in the case of $F$ locally free if we can produce a generating sheaf $\cE$ such that $\mu_{\cE} = \mu$. Let $\cE = \cE_{\rm{std}}$ as in \eqref{E:generatingsheaf}. We will prove that all $F\in \Coh(\cX)$, $d_{\cE}(F)/\rk(\cE) = \deg(F)$. 
    
    Both $d_{\cE}(F)/\rank(\cE)$ and $\deg(F)$ are additive on short exact sequences. So, it suffices by \cite{Taams}*{Lem. 1.2.20} to prove that $d_{\cE}(L)/\rk(\cE) = \deg(L)$ when $L$ is a line bundle on $\cX$. By \cite{Taams}*{Thm. 1.3.19}, the left hand side is given by the formula:
    \begin{equation}
    \label{E:degree}
        d(L) := d_{\cE}(L)/\rk(\cE) = \deg \pi_* L + \sum_p \frac{1}{e_p} \sum_{i=0}^{e_{p}-1} i\cdot m_{p,i}(L)
    \end{equation}
    where $m_{p,i}(L)$ is the multiplicity of the character $z\mapsto z^i$ in the decomposition of the $i_{p}^*L \in \Coh(B\mu_{e_p})$. To conclude, it will suffice to prove that for any Weil divisor $D$ on $\cX$ and any $p\in \cX(k)$ one has $d(\cO_{\cX}(D+p)) = d(\cO_{\cX}(D)) + d(\cO_{\cX}(p))$ and that $d(\cO_{\cX}(p)) = \deg(p)$. By \eqref{E:degree} it is clear that the first claim holds when $p$ and $D$ have disjoint support. Thus, we may assume that $D = n\cdot p$ for $n\in \bZ$. Now, $i_p^*(\cO_{\cX}(np))$ is the character of $\mu_{e_p}$ given by $z\mapsto z^n$. Write $n = m\cdot e_p +r$ where $0\le r\le e_p - 1$ and $m\in \bZ$. By \Cref{P:rounddown}, $\pi_*\cO_{\cX}(n\cdot p) = \cO_{X}(m\cdot \pi(p))$. On the other hand, $m_{r,p}(\cO_{\cX}(n\cdot p)) = 1$ and all other $m_{i,p}$ are zero. Thus, $d(\cO_{\cX}(n\cdot p)) = m + \frac{r}{e_p} = \frac{n}{e_p}$; this proves both of the claims.

    Finally, let $E$ be an arbitrary coherent sheaf. By \Cref{L:decomp} there is a decomposition $E = F\oplus T$ for $F$ locally free and $T$ torsion. Consider the HN filtration $0 = F_0 \subsetneq F_1\subsetneq \cdots \subsetneq F_n = F$ of $F$. The HN filtration of $E$ is $0 = E_{-1} \subsetneq E_0\subsetneq \cdots \subsetneq E_n = E$ where $E_i := F_i \oplus T$. 
\end{proof}

\begin{prop}
\label{P:propertiesofsemistable}
    All sheaves are on $\cX$ as in \Cref{N:orbifoldcurve}.
    \begin{enumerate}
        \item If $L$ is a line bundle then $L$ is semistable.
        \item If $F$ is a semistable vector bundle then $F^\vee$ is semistable and $\mu(F^\vee) = -\mu(F)$. 
        \item If $F$ is a semistable vector bundle and $L$ is a line bundle then $F\otimes L$ is semistable with $\mu(F \otimes L) = \mu(F) + \frac{\deg(L)}{\rk F}$.
        \item If $F$ is a semistable vector bundle and $\mu(F) < 0$ then $H^0(\cX,F) = 0$.
    \end{enumerate}
\end{prop}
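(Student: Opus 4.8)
The plan is to establish the four claims in order, each reducing to the behaviour of slope, degree, and rank under the relevant operation, exploiting the HN theory from \Cref{P:stability} and the additivity of $\deg$ from \Cref{L:degdefinition}.

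For (1), I would argue that any line bundle $L$ has rank one, so any nonzero proper subsheaf $F \subsetneq L$ must have rank zero or one; a rank-one subsheaf of a line bundle is of the form $L(-D)$ for an effective divisor $D$ (equivalently, the quotient $L/F$ is torsion), whence $\deg(F) \le \deg(L)$ and so $\mu(F) \le \mu(L)$, while a rank-zero subsheaf has $\mu = \infty$ only if nonzero torsion — but a line bundle on the smooth stack $\cX$ is torsion-free, so there are no nonzero torsion subsheaves. Thus every proper subsheaf satisfies $\mu(F) \le \mu(L)$, giving semistability.

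For (2) and (3), the slope identities $\mu(F^\vee) = -\mu(F)$ and $\mu(F\otimes L) = \mu(F) + \deg(L)/\rk F$ follow from additivity of degree and the formulas $\deg(F^\vee) = -\deg(F)$, $\rk(F^\vee) = \rk(F)$, and $\deg(F\otimes L) = \deg(F) + \rk(F)\deg(L)$, which I would verify using the determinant description $\deg(E) = \deg(\bigwedge^{\rk E} E)$. For preservation of semistability, I would use the subsheaf-to-quotient duality: a destabilizing subsheaf of $F^\vee$ dualizes to a destabilizing quotient of $F$, and the HN filtration of \Cref{P:stability} shows the minimal slope quotient of a semistable bundle equals its slope; tensoring by the line bundle $L$ is an exact autoequivalence that shifts all slopes by the same amount $\deg(L)/\rk$, so it preserves the semistability inequality verbatim. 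The one subtlety is that I must confine attention to locally free $F$ so that $F^\vee$ is again a vector bundle and the subsheaf/quotient correspondence under $(-)^\vee$ is clean; since the hypothesis assumes $F$ a semistable vector bundle, this is automatic.

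For (4), suppose $F$ is semistable with $\mu(F) < 0$ and, for contradiction, $H^0(\cX,F) \ne 0$: a nonzero global section gives a nonzero map $\cO_{\cX} \to F$, whose image is a subsheaf of $F$ of positive rank and, being a quotient of $\cO_\cX$, has $\deg \ge 0$ — more precisely its slope is $\ge 0$ since a nonzero quotient $\cO_\cX \twoheadrightarrow G$ has $\deg(G) \ge 0$ by the remark following \Cref{D:degree} (as $H^0(\cX,\cO_\cX(\Div s)) \ne 0$ forces $\deg \ge 0$). This produces a subsheaf of nonnegative slope inside $F$, contradicting $\mu(F) < 0$ and semistability, which requires every subsheaf to have slope $\le \mu(F) < 0$. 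I expect the main obstacle to be item (4), specifically making rigorous the claim that the image of a nonzero global section has nonnegative slope on the stack $\cX$ rather than on a scheme: one must track how degree interacts with torsion and torsion-free parts of the image, using \Cref{L:decomp} to split off torsion (which has positive degree) and the rounding-down behaviour of $\pi_*$ from \Cref{P:rounddown}; the remaining items are essentially formal consequences of degree additivity.
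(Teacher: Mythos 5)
Your proposal is correct and takes essentially the same route as the paper: subsheaves of a line bundle are torsion-free of rank one with smaller degree for (1), determinant formulas for the slope identities in (2) and (3), the subsheaf/quotient duality under $(-)^\vee$ for semistability of the dual (which the paper itself only spells out later, in \Cref{L:dualHN}), and the observation that a nonzero section embeds a slope-zero subsheaf into $F$ for (4). The extra caution you raise about item (4) is unnecessary: since $F$ is locally free its subsheaves are torsion-free, so the kernel of $\cO_{\cX}\to F$ is a torsion subsheaf of $\cO_{\cX}$ and hence zero, giving $\mu(F')=0$ directly.
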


\begin{proof}
    (1) $\deg L = \mu(L)$ and a subsheaf $L'\subset L$ is necessarily torsion free of rank $1$, hence also a line bundle. The inclusion $L'\hookrightarrow L$ is a nonzero morphism and consequently $\deg L' \le \deg L$. (2) is by $\deg L^\vee = -\deg L$ and the fact that formation of the determinant bundle commutes with dualization. (3) follows from the fact that if $F$ is of rank $r$ then $\bigwedge^r(F\otimes L) \cong \bigwedge^r(F) \otimes L$. (4) A global section of $F$ is equivalent to a nonzero morphism of sheaves $\cO \to F$ and the existence of a subsheaf $F'$ of $F$ which is quotient of $\cO$. Thus, $\mu(F) < 0 = \mu(\cO) \le \mu(F')$ so $F$ is not semistable. 
\end{proof}

\begin{lem}
\label{L:ssvanishing}
    If $F$ is a semistable vector bundle on $\cX$ with $\mu(F) \ge 2g + n$ then for any line bundle $L$ with $\deg(L) \ge -1$ one has $H^1(\cX, F\otimes L) = 0$.
\end{lem}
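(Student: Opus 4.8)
The plan is to dualize via Serre duality and thereby reduce the desired vanishing of $H^1$ to the vanishing of $H^0$ for an explicit semistable bundle of negative slope, to which \Cref{P:propertiesofsemistable}(4) then applies directly. Since $-\otimes_{\cO_\cX}\omega_\cX[1]$ is a Serre functor on $\DCoh(\cX)$, Serre duality supplies a natural isomorphism
\[
    H^1(\cX, F\otimes L) \cong H^0\bigl(\cX,\, F^\vee \otimes L^\vee \otimes \omega_\cX\bigr)^\vee ,
\]
obtained by taking $E = \cO_\cX$ in the duality $\RHom(E,G)\cong \RHom(G,E\otimes\omega_\cX[1])^\vee$. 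Hence it suffices to show that the coherent sheaf $G := F^\vee \otimes L^\vee \otimes \omega_\cX$ has no nonzero global sections.

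Next I would verify that $G$ is a semistable vector bundle of strictly negative slope. As $F$ is a semistable vector bundle, \Cref{P:propertiesofsemistable}(2) gives that $F^\vee$ is semistable with $\mu(F^\vee) = -\mu(F)$; and since $L^\vee \otimes \omega_\cX$ is a line bundle, \Cref{P:propertiesofsemistable}(3) gives that $G$ is again semistable, with
\[
    \mu(G) = -\mu(F) - \deg(L) + \deg(\omega_\cX).
\]
Thus $\mu(G) < 0$ follows from the numerical inequality $\mu(F) > \deg(\omega_\cX) - \deg(L)$, and then \Cref{P:propertiesofsemistable}(4) yields $H^0(\cX,G) = 0$, which completes the argument.

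It remains to establish that numerical inequality, and this is where the hypotheses are used. The essential input is the degree of the dualizing bundle of an orbifold curve,
\[
    \deg(\omega_\cX) = 2g - 2 + \sum_{i=1}^n \left(1 - \frac{1}{e_i}\right),
\]
where $g$ is the genus of the coarse space $X$; this follows from the ramification formula $\omega_\cX \cong \pi^*\omega_X \otimes \cO_\cX\bigl(\sum_i (e_i-1)\,p_i\bigr)$ together with $\deg(p_i) = 1/e_i$ from \Cref{D:degree} and the additivity of $\deg$ from \Cref{L:degdefinition}. Since each $e_i \ge 2$ one has $0 < 1 - 1/e_i < 1$, whence $\deg(\omega_\cX) < 2g - 2 + n$. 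Combining this with $\deg(L) \ge -1$ gives
\[
    \deg(\omega_\cX) - \deg(L) < (2g-2+n) + 1 = 2g - 1 + n < 2g + n \le \mu(F),
\]
so $\mu(G) < 0$, as needed.

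The main obstacle is the canonical degree computation: one must pin down $\deg(\omega_\cX)$ (or at least the bound $\deg(\omega_\cX) < 2g-2+n$) in the paper's orbifold degree conventions rather than simply quoting it, which in practice means justifying the ramification formula for $\omega_\cX$ and applying \Cref{D:degree}. Everything else is a routine application of Serre duality and the already-established properties of slope-semistable sheaves in \Cref{P:propertiesofsemistable}. A minor but necessary point to confirm is that the final inequality is \emph{strict}, so that \Cref{P:propertiesofsemistable}(4), which requires $\mu < 0$ strictly, genuinely applies; the estimate above is strict precisely because $\sum_i (1 - 1/e_i) < n$.
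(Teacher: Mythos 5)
Your proposal is correct and follows essentially the same route as the paper: Serre duality reduces the claim to $H^0(\cX, F^\vee\otimes L^\vee\otimes\omega_\cX)=0$, which follows from semistability and a strictly negative slope via \Cref{P:propertiesofsemistable}. The only difference is cosmetic: the paper simply quotes $\deg\omega_\cX = 2g-2+\sum_i\tfrac{e_i-1}{e_i}$ from the literature, whereas you rederive it from the ramification formula, and you spell out the strictness of the final inequality, which the paper leaves implicit.
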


\begin{proof}
    By Serre duality, $H^1(\cX,F\otimes L) \cong H^0(\cX,F^\vee \otimes L^\vee \otimes \omega_{\cX})^*$. By \Cref{P:propertiesofsemistable}, $F^\vee \otimes L^\vee \otimes \omega_{\cX}$ is semistable. By \cite{VoightZB}*{Prop. 5.5.6}, $\deg \omega_{\cX} = 2g(X) - 2 + \sum_{i=1}^n \tfrac{e_i-1}{e_i}$. By \Cref{P:propertiesofsemistable}, $\deg(F^\vee \otimes L^\vee \otimes \omega_{\cX})<0$ and the result follows.
\end{proof}

\subsection{Global Generation}
We consider the diagram
\[
\begin{tikzcd}
    \cX\arrow[r,"\pi"]\arrow[dr,"f",swap]&X\arrow[d,"g"]\\
    &\Spec k.
\end{tikzcd}
\]
where $f$ and $g$ are the structure morphisms and $\pi$ is the coarse moduli map. Recall that a coherent sheaf $\cF$ on a scheme $X$ over a field $k$ is globally generated if and only if the counit of adjunction $g^*g_* \cF\to \cF$ is surjective. 

\begin{defn} 
\label{D:generated}
    Fix $\cE$ a generating sheaf on $\cX$. Call $F\in \Coh(\cX)$ $\cE$-\emph{generated} if there exists $r\in \mathbb{N}$ and a surjection $\cE^{\oplus r} \twoheadrightarrow \cF$.
\end{defn}

For $X$ a scheme, a coherent sheaf $F$ on $X$ is globally generated in the usual sense if it is $\cO_X$-generated as in \Cref{D:generated}.

\begin{lem}
\label{L:ggonstack}
    Consider $F\in \Coh(\cX)$. 
    \begin{enumerate} 
        \item If $\pi_*(\cE^\vee \otimes F)$ is globally generated then $\Hom(\cE,F)\otimes \cE \to F$ is surjective. In particular, $F$ is $\cE$-generated.
        \item If in addition $\Hom(\cE,\cO_{\cX}) \otimes \cE \to \cO_{\cX}$ induces a surjection on global sections, then $\Hom(\cE,F)\otimes F \to F$ induces a surjection on global sections.
    \end{enumerate}
\end{lem}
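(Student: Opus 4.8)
The plan is to reduce both parts to the defining property of the generating sheaf (\Cref{D:generatingsheaf}) together with the interplay between two nested adjunctions. Throughout I write $\cH om(\cE,F) = \cE^\vee\otimes F$, valid since $\cE$ is locally free, so that $\Hom(\cE,F) = H^0(\cX,\cE^\vee\otimes F) = \Gamma\big(X,\pi_*(\cE^\vee\otimes F)\big)$ as a $k$-vector space (using $g_*\pi_* = f_* = \Gamma$). The key structural observation is that $\big(\pi^*(-)\otimes\cE,\ \pi_*\cH om(\cE,-)\big)$ is an adjoint pair between $\QCoh(X)$ and $\QCoh(\cX)$, since $\Hom_{\cX}(\pi^*\cM\otimes\cE,\cF)\cong\Hom_X(\cM,\pi_*\cH om(\cE,\cF))$; the generating-sheaf morphism of \Cref{D:generatingsheaf} is precisely the counit of this adjunction evaluated at $\cF$.

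For part (1), I would factor the evaluation map $\mathrm{ev}\colon \Hom(\cE,F)\otimes_k\cE\to F$ as follows. Writing $W := \Hom(\cE,F)$, note $W\otimes_k\cO_{\cX} = \pi^*(W\otimes_k\cO_X)$, and the global-generation hypothesis says exactly that the counit $W\otimes_k\cO_X\to \pi_*(\cE^\vee\otimes F)$ of the $(g^*,g_*)$-adjunction on $X$ is surjective, under the identification $W = \Gamma\big(X,\pi_*(\cE^\vee\otimes F)\big)$. Applying the exact functor $\pi^*(-)\otimes\cE$ to this surjection yields a surjection $W\otimes_k\cE \twoheadrightarrow \pi^*\pi_*(\cE^\vee\otimes F)\otimes\cE$, and composing with the generating-sheaf counit $\pi^*\pi_*(\cE^\vee\otimes F)\otimes\cE\to F$, surjective by \Cref{D:generatingsheaf}, gives a surjection $W\otimes_k\cE\to F$. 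It then remains to identify this composite with $\mathrm{ev}$: transposing across the adjunction $\big(\pi^*(-)\otimes\cE,\ \pi_*\cH om(\cE,-)\big)$, the map $\mathrm{ev}$ corresponds to the tautological section-evaluation map $W\otimes_k\cO_X\to\pi_*\cH om(\cE,F)$, which is the very counit used above. Since $\cX$ is projective, $W$ is finite-dimensional, so choosing a basis reads off a surjection $\cE^{\oplus r}\twoheadrightarrow F$ with $r=\dim_k W$, proving $F$ is $\cE$-generated.

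Part (2) is essentially formal once this setup is in place, and I will prove that the evaluation map $\Hom(\cE,F)\otimes\cE\to F$ is surjective on global sections. The extra hypothesis says $H^0\big(\cX,\Hom(\cE,\cO_{\cX})\otimes_k\cE\big)\to H^0(\cX,\cO_{\cX})$ is surjective; since $\id_{\cO_{\cX}}$ is a global section of $\cO_{\cX}$, this produces a decomposition of the identity $\sum_j \psi_j\circ s_j = \id_{\cO_{\cX}}$ with $\psi_j\in\Hom(\cE,\cO_{\cX})$ and $s_j\in H^0(\cX,\cE)$. Given any $t\in H^0(\cX,F) = \Hom(\cO_{\cX},F)$ I then write $t = t\circ\id_{\cO_{\cX}} = \sum_j (t\circ\psi_j)\circ s_j$, where $t\circ\psi_j\in\Hom(\cE,F)$. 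Under the identification $H^0\big(\cX,\Hom(\cE,F)\otimes_k\cE\big) = \Hom(\cE,F)\otimes_k H^0(\cX,\cE)$, this exhibits $t$ as the image of $\sum_j (t\circ\psi_j)\otimes s_j$ under the map induced by evaluation on global sections, proving surjectivity.

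The only nontrivial point — and the step I would be most careful with — is the compatibility claim in part (1): that the adjoint transpose of $\mathrm{ev}$ is exactly the global-generation counit on $X$, so that the two surjective factors genuinely compose to $\mathrm{ev}$ rather than to some other map. This is a naturality check for the counits of the two adjunctions and involves no geometry; everything else is bookkeeping with the projection formula and the identification $\Hom(\cE,F)=\Gamma\big(X,\pi_*(\cE^\vee\otimes F)\big)$.
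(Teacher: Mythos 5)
Your proof is correct and follows essentially the same route as the paper: part (1) factors the evaluation map through $\pi^*\pi_*(\cE^\vee\otimes F)\otimes\cE$ by composing the (pulled-back) global-generation counit on $X$ with the generating-sheaf counit on $\cX$, and part (2) is the paper's commutative-square argument rewritten as a partition of $\id_{\cO_{\cX}}$. Your explicit verification that the composite agrees with the evaluation map is a point the paper leaves implicit, but it is the right thing to check.
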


\begin{proof}
    (1) By $\cE$ being a generating sheaf, $\pi^*(\pi_*(\cE^\vee \otimes F))\otimes \cE\twoheadrightarrow F$ is surjective. Since $\pi_*(\cE^\vee \otimes F)$ is globally generated $g^*g_*\pi_*(\cE^\vee \otimes F)\twoheadrightarrow \pi_*(\cE^\vee \otimes F)$ is surjective. $\pi^*$ is right exact, so $\pi^*g^*g_*\pi_*(\cE^\vee \otimes F)\twoheadrightarrow \pi^*\pi_*(\cE^\vee \otimes F)$ is surjective and tensoring by $\cE$ gives $\Hom(\cE,F)\otimes \cE = f^*f_*(\cE^\vee \otimes F)\otimes \cE \twoheadrightarrow \pi^*\pi_*(\cE^\vee \otimes F)\otimes \cE \twoheadrightarrow F$. For (2), consider $s\in H^0(\cX,F)$. There is a commutative square 
    \[
    \begin{tikzcd}
        \Hom(\cE,\cO_{\cX})\otimes \cE \arrow[r]\arrow[d]& \cO_{\cX}\arrow[d,"s"]\\
        \Hom(\cE,F)\otimes \cE\arrow[r]& F.
    \end{tikzcd}
    \]
    Applying $H^0$, we see that $\Hom(\cE,\cO_{\cX})\otimes H^0(\cX,\cE)\to H^0(\cX,F)$ maps onto the subspace containing $s$. Consequently, so does $\Hom(\cE,F)\otimes H^0(\cX,\cE)\to H^0(\cX,F)$.
\end{proof}

\subsection{Chen-Ruan cohomology of orbifold curves and orbifold Chern character}

Let $\cX$ denote a nonsingular and projective orbifold curve over $\bC$. The \emph{Chen-Ruan cohomology} of $\cX$ is $H^*_{\rm{CR}}(\cX;\bZ):= H^*(\lvert I_{\cX}\rvert;\bZ)$, where $I_{\cX}$ is the inertia stack of $\cX$ and $\lvert I_{\cX}\rvert$ denotes its coarse space, regarded as a complex analytic space by analytification \cite{GAGA}. The reader is invited to consult \cites{ChenRuan,AGV} for a more thorough discussion of Chen-Ruan cohomology.

\begin{ex}
\label{Ex:CRofcurve}
    We use \Cref{N:orbifoldcurve}. $I_{\cX}$ is a disjoint union of stacks: 
    \[
    I_{\cX} = \cX \sqcup \bigsqcup_{i=1}^n \bigsqcup_{e_i-1}B\mu_{e_i} \text{ and } \lvert I_{\cX}\rvert = X \sqcup \bigsqcup_{i=1}^n \bigsqcup_{e_i-1} \Spec \bC.
    \]
    In particular, $H^*_{\rm{CR}}(\cX) = H^*(X;\bZ) \oplus \bigoplus_{i=1}^n \bigoplus_{e_i-1}H^*(\pt; \bZ)$.
\end{ex}

Next, we define a Chern character for orbifold curves. Associated to each of the $p_i$ is a residual gerbe, $B\mu_{e_i}$ and there is a pullback morphism $i_{p_i}^*:\Coh(\cX)\to \Coh(B\mu_{e_i})$ which sends $E\in \Coh(\cX)$ to its corresponding representation of $\mu_{e_i}$. Being a representation of $\mu_{e_i}$, $i_{p_i}^*(E)$ decomposes as a sum of irreducible representations, each of which corresponds to a character $\chi_j:\mu_{e_i}\to \bG_m$, for $0\le j \le e_i-1$. Let $m_{p_i,j}(E)$ denote the multiplicity of the character $\chi_j$ of $\mu_{e_i}$ in $i_{p_i}^*(E)$. Put $\mathfrak{m}_{p_i}(E) = (m_{p_i,1}(E),\ldots,m_{p_i,e_i-1}(E)) \in \bZ^{e_i-1}$. Since $\cX$ has the resolution property by \cite{Totaroresolution}*{Thm. 1.2}, it suffices to define $\alpha:K_0(\cX) \to \Pic(X) \oplus \bZ \oplus \bigoplus_{i=1}^n \bZ^{e_i-1}$ on vector bundles. So, given (the class of) a vector bundle, $E$, we define
\[
    \alpha(E) = (\det(\pi_*(E)),\rk(E), \mathfrak{m}_{p_1}(E),\ldots, \mathfrak{m}_{p_n}(E)).
\]

\begin{prop}
\label{P:K0computation}
\cite{Taams}*{Thm. 1.2.27} The map $\alpha:K_0(\cX) \to \Pic(X) \oplus \bZ \oplus \bigoplus_{i=1}^n \bZ^{e_i-1}$ is an isomorphism.
\end{prop}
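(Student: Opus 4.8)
The plan is to check that $\alpha$ is a well-defined homomorphism and then exhibit an explicit two-sided inverse. Since $\cX$ has the resolution property by \cite{Totaroresolution}*{Thm. 1.2}, every class in $K_0(\cX)$ is represented by a vector bundle, so it suffices to verify that $\alpha$ is additive on short exact sequences of vector bundles. The rank is obviously additive. Because $\cX$ is tame, the coarse moduli map has vanishing higher direct images, so $\pi_*$ is exact and $\det\pi_*$ is additive. Finally $i_{p_i}^*$ is exact and, by tameness, $\Coh(B\mu_{e_i})$ is semisimple; hence a short exact sequence of vector bundles restricts at $p_i$ to a (split) short exact sequence of $\mu_{e_i}$-representations, and the multiplicities $\mathfrak{m}_{p_i}$ add. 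Thus $\alpha$ descends to $K_0(\cX)$.

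For surjectivity I would evaluate $\alpha$ on three families of classes. First, $\alpha([\cO_{\cX}]) = (\cO_X, 1, 0, \ldots, 0)$. Second, for $L\in\Pic(X)$ the projection formula gives $\pi_*\pi^*L \cong L$ while $i_{p_i}^*\pi^*L$ is the trivial representation, so $\alpha([\pi^*L]) = (L, 1, 0, \ldots, 0)$. Third, for $1\le j\le e_i-1$ the computation in the proof of \Cref{P:stability} shows $i_{p_i}^*\cO_{\cX}(j\,p_i) = \chi_j$, while \Cref{P:rounddown} gives $\pi_*\cO_{\cX}(j\,p_i) = \cO_X$; hence $\alpha([\cO_{\cX}(j\,p_i)]) = (\cO_X, 1, \mathbf{e}_j^{(i)})$, where $\mathbf{e}_j^{(i)}$ is the corresponding standard basis vector of the $i$-th summand. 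Consequently the assignment
\[
    (L, r, (a_{i,j})) \longmapsto [\pi^*L] + (r-1)[\cO_{\cX}] + \sum_{i}\sum_{j} a_{i,j}\bigl([\cO_{\cX}(j\,p_i)] - [\cO_{\cX}]\bigr)
\]
defines a section $\gamma$ of $\alpha$, i.e. $\alpha\circ\gamma = \id$. In particular $\alpha$ is surjective and $\gamma$ is injective.

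It then remains to prove $\alpha$ injective, equivalently that $\gamma$ is surjective, i.e. that $K_0(\cX)$ is generated by the classes $[\cO_{\cX}]$, $[\pi^*L]$ and $[\cO_{\cX}(j\,p_i)]$. \textbf{This generation statement is the main obstacle.} I would establish it with the localization sequence in $G$-theory (Thomason): writing $Z = \bigsqcup_i B\mu_{e_i}$ for the reduced stacky locus and $U = \cX\setminus Z \cong X\setminus\{\pi(p_i)\}$ for its open complement, regularity identifies $G$-theory with $K$-theory and yields an exact sequence
\[
    \bigoplus_{i=1}^n R(\mu_{e_i}) \xrightarrow{\;i_*\;} K_0(\cX) \xrightarrow{\;j^*\;} K_0(U) \to 0 .
\]
Since $K_0(U)$ is generated by line bundles and $\Pic(X)\to\Pic(U)$ is surjective, every class of $K_0(U)$ is $j^*$ of a combination of $[\cO_{\cX}]$ and $[\pi^*L]$; subtracting such a lift reduces an arbitrary $\beta\in K_0(\cX)$ to the image of $i_*$. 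Now $R(\mu_{e_i})$ is spanned by $\chi_0,\ldots,\chi_{e_i-1}$, and $i_*\chi_j$ is the class of the twisted skyscraper $\cO_{p_i}^{(j)} := i_{p_i *}\chi_j$. The short exact sequences
\[
    0 \to \cO_{\cX}\bigl((j-1)p_i\bigr) \to \cO_{\cX}(j\,p_i) \to \cO_{p_i}^{(j)} \to 0
\]
express each $[\cO_{p_i}^{(j)}]$ as $[\cO_{\cX}(j\,p_i)] - [\cO_{\cX}((j-1)p_i)]$, and since $e_i\,p_i = \pi^*\pi(p_i)$ these differences (for $j=1,\dots,e_i$) all lie in the span of our chosen generators. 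Hence $\im(i_*)$ is contained in that span, so $\gamma$ is surjective and $\alpha$ is an isomorphism. The delicate points to get right are the exactness and identification of $\pi_*$ and $i_{p_i}^*$ under tameness, the precise characters carried by the twisted skyscraper sequences, and the validity of the stacky localization sequence.
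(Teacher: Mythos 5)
The paper offers no proof of this proposition: it is quoted directly from Taams's thesis (Thm.\ 1.2.27), so there is nothing internal to compare your argument against, and a self-contained proof is genuinely more than the paper provides. Taken on its own terms, your argument is correct. The structure --- additivity of $(\det\pi_*,\rk,\mathfrak{m}_{p_i})$ on short exact sequences of bundles using tameness and semisimplicity of $\Rep(\mu_{e_i})$, an explicit section $\gamma$ built from $[\pi^*L]$, $[\cO_\cX]$ and $[\cO_\cX(j\,p_i)]$, and a generation statement via the localization sequence $\bigoplus_i R(\mu_{e_i})\to K_0(\cX)\to K_0(U)\to 0$ together with d\'evissage and the twisted-skyscraper sequences $0\to\cO_\cX((j-1)p_i)\to\cO_\cX(j\,p_i)\to\cO_{p_i}^{(j)}\to 0$ --- is the standard way to compute $K_0$ of such an orbifold, and the ingredients you invoke (Thomason's localization for quotient stacks, the identity $e_i\,p_i=\pi^*\pi(p_i)$, surjectivity of $\Pic(X)\to\Pic(U)$, $K_0$ of a smooth affine curve being generated by line bundles) are all available here. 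Two small points to tighten. First, to pass from ``$K_0(\cX)$ is generated by $[\cO_\cX]$, $[\pi^*L]$, $[\cO_\cX(j\,p_i)]$'' to ``$\gamma$ is surjective'' you need the image of $\gamma$ to contain the subgroup generated by these classes, i.e.\ you need $L\mapsto[\pi^*L]-[\cO_\cX]$ to be a homomorphism $\Pic(X)\to K_0(\cX)$; this follows by transporting the relation $[L_1]+[L_2]=[L_1\otimes L_2]+[\cO_X]$ in $K_0(X)$ (valid on a curve via skyscraper sequences) along the exact functor $\pi^*$, but it should be said explicitly. Second, the character carried by $\cO_\cX(j\,p_i)/\cO_\cX((j-1)p_i)$ is $\chi_{\pm j}$ depending on conventions; as you acknowledge, this is harmless, since as $j$ runs over a full period the quotients exhaust all of $R(\mu_{e_i})$ either way, and your surjectivity computation only uses the paper's own convention $i_{p_i}^*\cO_\cX(j\,p_i)=\chi_j$ from the proof of \Cref{P:stability}. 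Neither point is a gap that threatens the argument.
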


\begin{defn} 
\label{D:chorb}
Let $\cX$ denote a nonsingular projective orbifold curve over $\bC$. The \emph{orbifold Chern character}, $\ch_{\rm{orb}}:K_0(\cX) \to H_{\rm{CR}}^*(\cX)$, is defined by the following diagram:

\[
\begin{tikzcd}K_0(\cX)\arrow[r,"\ch_{\rm{orb}}"]\arrow[d,"\alpha",swap] & H^*_{\rm{CR}}(\cX) \arrow[d,equal]\\
    \Pic(X)\oplus \bZ \oplus \bigoplus_{i=1}^n \bZ^{e_i-1}\arrow[r,"(\ch {,}\id)"] & H^*(X;\bZ) \oplus \bigoplus_{i=1}^n \bZ^{e_i-1}.
\end{tikzcd}
\]
Here, $\ch:K_0(X) = \Pic(X)\oplus \bZ \to H^*(X;\bZ)$ is the usual Chern character defined by $E\mapsto (\rk(E),\deg(E))$. The right vertical identification is furnished by \Cref{Ex:CRofcurve}. Later, we will consider the \emph{algebraic} Chen-Ruan cohomology, $H^*_{\rm{CR,alg}}(\cX) := \im \ch_{\rm{orb}}$. In our case, $H^*_{\rm{CR,alg}}(\cX) = H^0(X;\bZ)\oplus H^2(X;\bZ) \oplus \bigoplus_{i=1}^n \bZ^{e_i-1}$.
\end{defn}

\section{Rouquier dimension of orbifold curves}

This section is devoted to the proof of the following theorem, which generalises \cite{orlov2008remarks}*{Thm 6.}. We use \Cref{N:orbifoldcurve} throughout.

\begin{thm}
\label{T:Orlovtheorem}
    If $\cX$ is a complete tame orbifold curve over a field $k$, then $\rdim \DCoh(\cX) = 1$.
\end{thm}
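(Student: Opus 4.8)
The plan is to prove the two inequalities $\rdim\DCoh(\cX)\ge 1$ and $\rdim\DCoh(\cX)\le 1$ separately, with the second being the substantial one.

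For the lower bound I would rule out $\rdim\DCoh(\cX)=0$. If some object $G$ satisfied $\langle G\rangle_0=\DCoh(\cX)$, then every object would be a summand of a finite direct sum of shifts of $G$. Since $\cX$ is complete over $k$ the $\Hom$-spaces are finite-dimensional, so Krull--Schmidt holds and the indecomposables of $\langle G\rangle_0$ are exactly the shifts of the finitely many indecomposable summands of $G$. But a complete orbifold curve has infinitely many closed points $p$, and the corresponding structure sheaves $k(p)$ are pairwise non-isomorphic indecomposable sheaves concentrated in degree $0$, hence none is a shift of another. This forces $G$ to have infinitely many summands, a contradiction; alternatively one may simply invoke $\mathrm{hd}\,\Coh(\cX)=1$ from \Cref{L:homologicaldimension} to see the category is not semisimple.

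For the upper bound I would adapt Orlov's method, exhibiting a single generator $G$ with $\langle G\rangle_1=\DCoh(\cX)$. First, by \Cref{L:summand} every object splits as a finite sum of shifts of its cohomology sheaves, so it suffices to place every coherent sheaf in $\langle G\rangle_1$; and by \Cref{L:decomp} every coherent sheaf is $T\oplus F$ with $T$ torsion and $F$ locally free, reducing to these two classes. The generator is built from the standard generating sheaf $\cE_{\rm{std}}$ of \eqref{E:generatingsheaf} together with a twist by a very ample $\pi^*\cO_X(1)$ of large degree, so that over the scheme locus it restricts to Orlov's $\cO\oplus\cO(1)$; the generating property of \Cref{D:generatingsheaf} lets $\cE_{\rm{std}}$ play the role that $\cO_X$ plays on a scheme at the stacky points, recording via \Cref{L:ggonstack} every $\mu_{e_i}$-character that enters the classification \Cref{P:K0computation}. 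The uniform goal is to realize each sheaf as a direct summand of the cokernel (equivalently, up to shift, the kernel) of a single morphism between objects of $\operatorname{add}(G)$: since the cone of such a morphism splits by \Cref{L:summand} into its cohomology sheaves, this places the sheaf in $\langle G\rangle_1$. For a torsion sheaf this is achieved by a summand-splitting device: a suitably positive section of the generator has reduced divisor, so the induced map has cokernel a direct sum of point-sheaves, off of which the prescribed torsion summand splits, with no syzygy left to resolve.

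The main obstacle is the locally free case, namely proving the analogous one-step statement for a vector bundle $F$ while keeping a single, $F$-independent generator. A naive evaluation sequence $0\to K\to B\to F\to 0$ with $B\in\operatorname{add}(G)$ has syzygy $K$ of low slope, which a fixed finite generator cannot itself generate, so the difficulty is to control $K$ (or to exhibit $F$ as a summand of a cone without any leftover syzygy, as in the torsion case). This is exactly where the slope machinery developed in \S3 is meant to intervene: I would pass through the Harder--Narasimhan filtration of \Cref{P:stability} to reduce cohomological claims to the semistable case, and then use the vanishing \Cref{L:ssvanishing} together with \Cref{P:propertiesofsemistable} and the global-generation criterion \Cref{L:ggonstack} to guarantee that after twisting into the correct slope range the relevant $H^1$'s vanish and the intermediate bundle is again $\cE_{\rm{std}}$-generated, so that $F$ appears as a summand of a single cone between objects of $\langle G\rangle$. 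The genuinely new, orbifold-theoretic content — and the part I expect to require the most care — is that degrees are now fractional (\Cref{D:degree}), so the regularity thresholds in \Cref{L:ssvanishing} must be tracked in $\bQ$, and global generation must be phrased through the generating sheaf rather than through $H^0$; verifying that $\cE_{\rm{std}}$-generation is compatible with the slope filtration and with the vanishing lemma at the residual gerbes is the technical heart that upgrades Orlov's curve argument to the stacky setting.
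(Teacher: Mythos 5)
Your lower bound is handled by a different route from the paper, which simply invokes $\rdim \DCoh(\cX) \ge \rdim \DCoh(X)$ from \cite{BallardFavero}*{Lem. 2.17} together with the known value $\rdim\DCoh(X)=1$ for smooth projective curves. Your Krull--Schmidt argument (infinitely many pairwise non-isomorphic indecomposable skyscrapers, none a shift of another, versus the finitely many indecomposable summands of a putative $0$-step generator) is a correct alternative. However, your fallback -- ``invoke $\mathrm{hd}\,\Coh(\cX)=1$ to see the category is not semisimple'' -- is not a valid substitute: $\db(\mathrm{rep}\:Q)$ for $Q$ Dynkin has a hereditary heart and yet has Rouquier dimension $0$, so non-semisimplicity of the heart does not by itself exclude $\rdim=0$.

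The genuine gap is in the locally free case of the upper bound. You correctly name the obstacle -- an evaluation sequence $0\to K\to B\to F\to 0$ leaves a syzygy $K$ of low slope that a fixed generator cannot reach -- but your proposed resolution (``after twisting into the correct slope range the relevant $H^1$'s vanish \dots\ so that $F$ appears as a summand of a single cone'') only restates the goal. Twisting up handles bundles all of whose HN slopes exceed the threshold $4g+2n$; it does nothing for a bundle with HN factors on both sides of that threshold, which is the generic case. The missing mechanism is the two-sided resolution of \Cref{L:Orlovmainlemma}: split the HN filtration at the maximal $i$ with $\mu(F_i/F_{i-1})\ge 4g+2n$, present the high-slope part by $(\pi^*L^{-1}\otimes\cE)^{\oplus r_1}\to\cE^{\oplus r_0}\to F_i\to 0$, and \emph{co}-present the low-slope part by $0\to F/F_i\to(\pi^*L\otimes\cE^\vee)^{\oplus s_0}\to(\pi^*L^2\otimes\cE^\vee)^{\oplus s_1}$, which is obtained by dualizing and requires identifying the HN filtration of $F^\vee$ (\Cref{L:dualHN}, \Cref{L:Orlovlemmab}). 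One then needs the purely homological \Cref{L: orlovCone}, which uses the extension class $F/F_i[-1]\to F_i$ to assemble these two two-term complexes into a single morphism whose cone contains $F$ as a summand; this is also why the generator must contain the four twists $\pi^*L^{-1}\otimes\cE$, $\cE$, $\pi^*L\otimes\cE^\vee$, $\pi^*L^2\otimes\cE^\vee$, including duals of the generating sheaf, and not just $\cE_{\rm{std}}$ and one positive twist. A secondary flaw: your torsion-sheaf device (a section with reduced divisor, whose cokernel is a sum of point sheaves) only reaches reduced torsion sheaves -- a non-reduced torsion sheaf such as $\cO_{2p}$ is not a summand of a sum of length-one skyscrapers; the paper instead shows (\Cref{L:torsionsheafsurjection}) that any torsion sheaf is $\cE$-generated with first syzygy again $\cE$-generated after twisting by $\pi^*L$, so that $T$ is the $H^0$ of a two-term complex in $\operatorname{add}(H)$ and hence a summand of its cone by \Cref{L:summand}.
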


By \cite{alper2023stacks}, the nonsingularity hypotheses on $\cX$ and $X$ imply that $\pi^*$ is exact. By \cite{elagin2022calculating}*{Prop. 6.1}, if the coarse moduli space of $\cX$ is $\bP^1$, then $\rdim \DCoh(\cX) = 1$. So we may assume that $g := g(X) \ge 1$. By \cite{BallardFavero}*{Lem. 2.17}, $\rdim \DCoh(\cX)\ge \rdim\DCoh(X)$ for any tame $\cX$ with $X$ reduced and separated. To prove \Cref{T:Orlovtheorem}, we construct $H\in\DCoh(\cX)$ such that $\langle H\rangle_1 = \DCoh(\cX)$. Since $\langle H\rangle_1$ is closed under shifts, isomorphisms, sums, and summands, it suffices by \Cref{L:summand} to prove that $E \in \langle H\rangle_1$ for all $E\in \Coh(\cX)$. Writing $E = F \oplus T$ as in \Cref{L:decomp}, we are further reduced to considering the cases where $E$ is locally free or torsion separately.

\begin{hyp}
\label{Hyp}
    We assume $g = g(X) \ge 1$ and consider a line bundle $L$ on $X$ with $\deg(L) \ge 8g+4n$. $\cE$ denotes the locally free sheaf $\cE = \bigoplus_{i=1}^n \bigoplus_{j=0}^{e_i-1}\cO_{\cX}(-j\cdot p_i)$, which is a generating sheaf by \cite{Taams}*{Thm. 1.3.13}. We put 
    \[
        H = (\pi^*L^{-1}\otimes \cE) \oplus \cE\oplus (\pi^*L\otimes \cE^\vee) \oplus (\pi^*L^2\otimes \cE^\vee).
    \]
\end{hyp}

We prove that $H$ is a 1-step generator of $\DCoh(\cX)$ by modifying the arguments of \cite{orlov2008remarks}. If $\cX\to \Spec k$ is an algebraic stack, $\cX_{\overline{k}}$ denotes its base-change along $\Spec \overline{k}\to \Spec k$, where $\overline{k}$ is the algebraic closure of $k$. If $F\in \QCoh(\cX)$, we write $F_{\overline{k}}$ for its pullback to $\cX_{\overline{k}}$. Formation of the coarse moduli space commutes with flat base-change and by abuse of notation we also write $\pi:\cX_{\overline{k}}\to X_{\olk}$ for the base-changed morphism.

\begin{lem}
\label{L:vanishingposdeg}
    For a coherent sheaf $U$ on $\cX$, $H^1(\cX,U) = 0$ if and only if $H^1(\cX,\cE^\vee \otimes U) = 0$.
\end{lem}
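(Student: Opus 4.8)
The plan is to exploit the explicit form of $\cE$ from \Cref{Hyp}. Writing $\cE = \bigoplus_{i=1}^n\bigoplus_{j=0}^{e_i-1}\cO_{\cX}(-j p_i)$ and dualizing gives $\cE^\vee \cong \bigoplus_{i=1}^n\bigoplus_{j=0}^{e_i-1}\cO_{\cX}(j p_i)$, so that
\[
    \cE^\vee \otimes U \cong \bigoplus_{i=1}^n\bigoplus_{j=0}^{e_i-1} U(j p_i),
\]
and, since the $j=0$ terms are copies of $\cO_{\cX}$, the sheaf $U$ itself is a direct summand of $\cE^\vee\otimes U$. As $H^1(\cX,-)$ commutes with finite direct sums, this makes $H^1(\cX,U)$ a direct summand of $H^1(\cX,\cE^\vee\otimes U)$, which immediately gives the implication $H^1(\cX,\cE^\vee\otimes U)=0 \Rightarrow H^1(\cX,U)=0$.

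For the converse I would reduce, via the same direct sum decomposition, to proving that $H^1(\cX, U) = 0$ forces $H^1(\cX, U(j p_i)) = 0$ for each stacky point $p_i$ and each $0\le j \le e_i-1$. First I would split off torsion: writing $U \cong F \oplus T$ with $F$ locally free and $T$ torsion as in \Cref{L:decomp}, every twist $T(j p_i)$ is again supported in dimension zero, so its cohomology can be computed after applying the (exact, by tameness) functor $\pi_*$ and realizing $\pi_* T(jp_i)$ as a skyscraper on the curve $X$, forcing $H^1(\cX, T(j p_i)) = 0$. Hence both $H^1(\cX,U)$ and $H^1(\cX,\cE^\vee\otimes U)$ depend only on the locally free summand $F$, and we may assume $U=F$ is locally free.

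I would then induct on $j$. The inclusion of line bundles $\cO_{\cX}(j p_i)\hookrightarrow \cO_{\cX}((j+1)p_i)$ has cokernel a sheaf $\cT$ supported at $p_i$; because $F$ is locally free, tensoring the short exact sequence by $F$ remains exact, yielding
\[
    0 \to F(j p_i) \to F((j+1)p_i) \to \cT\otimes F \to 0.
\]
Since $\cT\otimes F$ is supported at the single stacky point $p_i$ it has vanishing $H^1$, while $H^2(\cX,-)=0$ by \Cref{L:homologicaldimension}; the long exact cohomology sequence then produces a surjection $H^1(\cX,F(j p_i))\twoheadrightarrow H^1(\cX,F((j+1)p_i))$. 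Starting from $H^1(\cX,F)=0$ (the case $j=0$) and iterating yields $H^1(\cX,F(j p_i))=0$ for all $0\le j\le e_i-1$, and therefore $H^1(\cX,\cE^\vee\otimes F)=0$.

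The two places that need care — and which I expect to be the only real obstacle — are the vanishing of $H^1$ for sheaves supported at the stacky points (this is where tameness enters, guaranteeing $\pi_*$ is exact and that such sheaves behave like skyscrapers on the coarse curve $X$) and the exactness of the twisting sequence, which is precisely why I peel off the torsion summand first so that $\otimes F$ introduces no $\Tor_1$ contribution. Both are bookkeeping rather than conceptual difficulties.
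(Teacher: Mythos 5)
Your proof is correct and follows essentially the same strategy as the paper's: reduce to the summand-by-summand statement for the line bundles $\cO_{\cX}(j\,p_i)$, peel off torsion via \Cref{L:decomp}, and use a short exact sequence with torsion cokernel (whose $H^1$ vanishes) to propagate the vanishing. The only cosmetic difference is that you induct one twist at a time via $0\to F(j p_i)\to F((j+1)p_i)\to \cT\otimes F\to 0$, whereas the paper tensors $0\to \cO_{\cX}\to \cO_{\cX}(j\,p_i)\to T\to 0$ with $U$ and reaches $U(j\,p_i)$ in a single step.
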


\begin{proof}
    By \Cref{Hyp}, $\cE^\vee$ is a sum of line bundles of the form $\cO_{\cX}(j\cdot p_i)$ with $j\ge 0$. It suffices to prove that $H^1(\cX,U) = 0$ implies $H^1(\cX,\cE^\vee \otimes U) = 0$, the other implication being immediate from the fact that $H^1(\cX,U)$ is a summand of $H^1(\cX,\cE^\vee \otimes U)$. The case where $U$ is torsion is immediate and so by \Cref{L:decomp} we may assume that $U$ is locally free. By \cite{Taams}*{Ex. 1.2.24}, there is a short exact sequence $0\to \cO_{\cX} \to \cO_{\cX}(j\cdot p_i) \to T\to 0$, where $T$ is a torsion sheaf. Tensoring by $U$ gives $0\to U \to U\otimes \cO_{\cX}(j\cdot p_i) \to T'\to 0$, where $T'$ is again torsion. We obtain an exact sequence $H^1(\cX,U)\to H^1(\cX, U\otimes \cO_{\cX}(j\cdot p_i)) \to H^1(\cX,T') \to 0$. Since $T'$ is torsion, $H^1(\cX,T') = 0$ and therefore $H^1(\cX, U\otimes \cO_{\cX}(j\cdot p_i)) = 0$. 
\end{proof}

\begin{lem}
\label{L:vanishinggg}
    Let $F$ be a vector bundle on $\cX$. Suppose that for all line bundles $M$ on $X_{\overline{k}}$ with $\deg M =d$, one has $H^1(\cX_{\overline{k}},\cE^\vee_{\olk} \otimes F_{\olk}\otimes \pi^*M) = 0$. Then
    \begin{enumerate}
        \item $H^1(\cX, \cE^\vee \otimes F \otimes \pi^*N) = 0$ for any line bundle $N$ on $X$ with $\deg N \ge d$; and 
        \item $F\otimes \pi^*N$ is $\cE$-generated for any line bundle $N$ with $\deg N > d$. 
    \end{enumerate}
\end{lem}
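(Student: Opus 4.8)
The plan is to prove both statements by reducing everything to the algebraically closed base field $\olk$, where the hypothesis lives, and then exploiting the cohomological triviality of the coarse moduli map that tameness provides. I first record the two reductions. Since $\olk/k$ is flat, cohomology commutes with the base change $\cX_{\olk}\to\cX$, so for any coherent sheaf $G$ on $\cX$ one has $H^1(\cX,G)\otimes_k\olk\cong H^1(\cX_{\olk},G_{\olk})$; hence the vanishing in (1) may be checked over $\olk$. Likewise, a coherent sheaf on $X$ is globally generated if and only if its base change to $X_{\olk}$ is, because $g^*g_*(-)\to(-)$ is surjective after the faithfully flat base change $\Spec\olk\to\Spec k$ and $g_*$ commutes with it. So it suffices to work over $\olk$ throughout.

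The technical core is the following claim, proved entirely over $\olk$: for every line bundle $L'$ on $X_{\olk}$ with $\deg L'\ge d$ one has $H^1(\cX_{\olk},\cE^\vee_{\olk}\otimes F_{\olk}\otimes\pi^*L')=0$. I would argue by induction on $\deg L'-d\ge 0$. The base case $\deg L'=d$ is exactly the hypothesis. For the inductive step with $\deg L'>d$, choose a closed point $q\in X_{\olk}$ (one exists as $\olk$ is algebraically closed) and set $L'':=L'(-q)$, so $\deg L''=\deg L'-1\ge d$. Tensoring the structure sequence $0\to\cO_{X_{\olk}}(-q)\to\cO_{X_{\olk}}\to\cO_q\to 0$ with $L'$, pulling back along the exact functor $\pi^*$, and tensoring with the vector bundle $\cE^\vee_{\olk}\otimes F_{\olk}$ yields
\[
    0\to \cE^\vee_{\olk}\otimes F_{\olk}\otimes\pi^*L''\to \cE^\vee_{\olk}\otimes F_{\olk}\otimes\pi^*L'\to T\to 0,
\]
where $T=\cE^\vee_{\olk}\otimes F_{\olk}\otimes\pi^*\cO_q$ is torsion. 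The long exact sequence gives $H^1(\cE^\vee_{\olk}\otimes F_{\olk}\otimes\pi^*L'')\to H^1(\cE^\vee_{\olk}\otimes F_{\olk}\otimes\pi^*L')\to H^1(T)$, in which the left term vanishes by the inductive hypothesis and the right term vanishes because $T$ is torsion on the curve $\cX_{\olk}$ (exactly as in the proof of \Cref{L:vanishingposdeg}). Hence the middle term vanishes, completing the induction. Statement (1) is then immediate: for $N$ on $X$ with $\deg N\ge d$, the base change $N_{\olk}$ has degree $\ge d$, so the claim together with the flat base change reduction gives $H^1(\cX,\cE^\vee\otimes F\otimes\pi^*N)=0$.

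For (2) I would invoke \Cref{L:ggonstack}(1), which reduces $\cE$-generation of $F\otimes\pi^*N$ to global generation of $\pi_*(\cE^\vee\otimes F\otimes\pi^*N)$ on $X$. By the projection formula this sheaf equals $\pi_*(\cE^\vee\otimes F)\otimes N=:G\otimes N$, where $G$ is a vector bundle on $X$. By the reduction above it suffices to show $G_{\olk}\otimes N_{\olk}$ is globally generated on $X_{\olk}$, and since $\olk$ is algebraically closed this follows from the pointwise criterion: it is enough that $H^0\to$ fiber is surjective at each closed point $q\in X_{\olk}$, for which it suffices that $H^1(X_{\olk},G_{\olk}\otimes N_{\olk}(-q))=0$. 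Here tameness enters decisively: because $\cX$ is tame, $\pi_*$ is exact, so $R^{i}\pi_*=0$ for $i>0$ and the Leray spectral sequence degenerates; combined with the projection formula and flat base change for $\pi_*$ this gives
\[
    H^1(X_{\olk},G_{\olk}\otimes N_{\olk}(-q))\cong H^1\bigl(\cX_{\olk},\cE^\vee_{\olk}\otimes F_{\olk}\otimes\pi^*(N_{\olk}(-q))\bigr).
\]
Since $\deg N>d$ forces $\deg\bigl(N_{\olk}(-q)\bigr)=\deg N-1\ge d$, the claim proved above yields the vanishing, so $G\otimes N$ is globally generated and $F\otimes\pi^*N$ is $\cE$-generated.

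The individual steps are structurally routine once this framework is set up; the step most in need of care, and the one I expect to be the main obstacle, is the comparison of cohomology on $\cX_{\olk}$ with cohomology on $X_{\olk}$. This rests on the cohomological triviality of the coarse moduli map—precisely where tameness is used and where the argument would fail in general—and it must be combined cleanly with the projection formula and flat base change so that no exactness or base-change hypothesis is silently assumed.
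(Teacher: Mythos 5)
Your proof is correct, and its skeleton coincides with the paper's: reduce to $\olk$ by flat base change, convert the cohomology and global-generation questions on $\cX$ into questions on $X$ via the exactness of $\pi_*$ (tameness) and the projection formula, and then feed the result back through \Cref{L:ggonstack}. The one genuine difference is where the curve-theoretic content is handled. The paper pushes $\cE^\vee\otimes F$ down to $X_{\olk}$, splits off the torsion as $T\oplus E$, and then cites Orlov's Lemma 9 from \cite{orlov2008remarks} as a black box to get both the $H^1$-vanishing in all degrees $\ge d$ and the global generation in degrees $>d$. You instead reprove the content of that lemma: the twisting-by-a-point induction for the $H^1$-vanishing, carried out directly on $\cX_{\olk}$ using that $H^1$ of a torsion sheaf vanishes, and the pointwise criterion $H^1(G_{\olk}\otimes N_{\olk}(-q))=0$ for global generation on $X_{\olk}$. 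This buys self-containedness at the cost of length; the paper's route is shorter but leans on the external reference. One small point you assert without justification is that $G=\pi_*(\cE^\vee\otimes F)$ is a vector bundle: this is true ($\pi_*$ of a locally free sheaf on a tame orbifold curve is torsion-free, hence locally free on the smooth curve $X$), but since your pointwise criterion via $H^1(G\otimes N(-q))$ needs $G\otimes N\otimes\cO(-q)\to G\otimes N$ to be injective, you should either say this or split off the torsion as the paper does (torsion summands being globally generated for free).
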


\begin{proof}
    Suppose $k = \overline{k}$. $\pi_*(\cE^\vee \otimes F) = T \oplus E$ for $T$ torsion and $E$ locally free. By hypothesis, $0 = H^1(\cX, \cE^\vee \otimes F \otimes \pi^*M) = H^1(X,\pi_*(\cE^\vee \otimes F) \otimes M) = H^1(X,E\otimes M)$ for any line bundle $M$ of degree $d$. Therefore, by \cite{orlov2008remarks}*{Lem. 9} for any line bundles $N_1$ with $\deg(N_1)\ge d$ and $N_2$ with $\deg(N_2)>d$ one has that $H^1(X,E\otimes N_1) = 0$ and that $E\otimes N_2$ is generated by global sections. (1) now follows. $T\otimes N_2$ is a torsion sheaf and thus generated by global sections. So, $\pi_*(\cE^\vee \otimes F)\otimes N_2$ is generated by global sections and (2) follows from \Cref{L:ggonstack}. The case where $k$ is not algebraically closed follows from flat base change.
\end{proof}

\begin{lem}
\label{L:torsionsheafsurjection}
    For any torsion sheaf $T$ on $\cX$, there is an exact sequence 
    \[
        (\pi^*L^{-1}\otimes \cE)^{\oplus r_1}\to \cE^{\oplus r_0}\to T\to 0.
    \]
\end{lem}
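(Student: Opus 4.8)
The plan is to produce the two asserted syzygies separately: first surject $\cE^{\oplus r_0}$ onto $T$, and then show that the kernel of this surjection is generated by $\pi^*L^{-1}\otimes\cE$. For the first step, since $T$ is torsion, $\cE^\vee\otimes T$ is torsion and hence $\pi_*(\cE^\vee\otimes T)$ is a torsion sheaf on the curve $X$, so it is globally generated. By \Cref{L:ggonstack}(1) the canonical evaluation $q\colon\Hom(\cE,T)\otimes_k\cE\to T$ is therefore surjective; I set $r_0=\dim_k\Hom(\cE,T)$ and $K=\ker q$. As a subsheaf of the locally free sheaf $\cE^{\oplus r_0}$ the sheaf $K$ is torsion-free, hence locally free by \Cref{L:decomp}. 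It then suffices to prove that $K$ is $(\pi^*L^{-1}\otimes\cE)$-generated, i.e. that $K\otimes\pi^*L$ is $\cE$-generated, since the resulting surjection $(\pi^*L^{-1}\otimes\cE)^{\oplus r_1}\to K$ composed with $K\hookrightarrow\cE^{\oplus r_0}$ yields the claimed exact sequence.

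To establish $\cE$-generation of $K\otimes\pi^*L$ I would fix an integer $d$ with $2g+n<d<\deg L$ (possible since $\deg L\ge 8g+4n$) and invoke \Cref{L:vanishinggg}(2) with $F=K$ and $N=L$: because $\deg L>d$, it is enough to check $H^1(\cX_{\overline k},\cE_{\overline k}^\vee\otimes K_{\overline k}\otimes\pi^*M)=0$ for every line bundle $M$ on $X_{\overline k}$ of degree $d$, and by \Cref{L:vanishingposdeg} this is equivalent to $H^1(\cX_{\overline k},K_{\overline k}\otimes\pi^*M)=0$. Working over $\overline k$, where $K$ base-changes correctly by flatness and $T_{\overline k}$ stays torsion, I tensor $0\to K\to\cE^{\oplus r_0}\to T\to0$ by $\pi^*M$ and read the long exact cohomology sequence. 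Since $\cE$ is a sum of line bundles $\cO_\cX(-j\cdot p_i)$ of slope in $(-1,0]$, the bound $d>2g+n$ forces $H^1(\cX,\cE\otimes\pi^*M)=0$ by \Cref{L:ssvanishing} (applied summand by summand with trivial twist), so the desired vanishing $H^1(K\otimes\pi^*M)=0$ reduces to surjectivity of the multiplication map $\beta\colon\Hom(\cE,T)\otimes H^0(\cX,\cE\otimes\pi^*M)\to H^0(\cX,T\otimes\pi^*M)$.

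The surjectivity of $\beta$ is the crux, and it is where the structure of the generating sheaf enters. Because $T$ is torsion, $\Hom(\cE,T)=\bigoplus_{x\in\Supp T}\Hom_{\cO_{\cX,x}}(\cE_x,T_x)$ is computed pointwise, so it suffices to hit each stalk $(T\otimes\pi^*M)_x$ separately. I would choose one section $s_0\in H^0(\cX,\cE\otimes\pi^*M)$ whose fibre at each $x\in\Supp T$ is nonzero in every character-summand of $\cE_x$; such an $s_0$ exists since $\cE\otimes\pi^*M$ is globally generated for $d$ large and, crucially, $\cE=\bigoplus_{i,j}\cO_\cX(-j\cdot p_i)$ realises all $e_i$ characters of $\mu_{e_i}$ at each stacky point $p_i$. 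Evaluating the equivariant homomorphisms in $\Hom(\cE,T)$ against $s_0(x)$ then surjects onto $T_x$ character by character, giving surjectivity of $\beta$. The main obstacle is precisely this balancing: a careless (for instance, minimal) surjection onto a high-length torsion sheaf produces a kernel $K$ with arbitrarily negative $\mu_{\min}$, for which no fixed twist $\pi^*L$ can make $K\otimes\pi^*L$ a quotient of copies of $\cE$; using the full evaluation $\Hom(\cE,T)\otimes\cE$ together with the all-characters property of $\cE$ is what keeps $K$ positive enough. Once the vanishing over $\overline k$ is secured it feeds back through \Cref{L:vanishinggg}(2) to give the surjection $(\pi^*L^{-1}\otimes\cE)^{\oplus r_1}\to K$ over $k$, completing the sequence.
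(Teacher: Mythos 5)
Your overall skeleton coincides with the paper's: surject $\cE^{\oplus r_0}$ onto $T$ via the evaluation map, show the kernel $K$ twisted by $\pi^*L$ is $\cE$-generated by verifying $H^1(\cX,K\otimes\pi^*M)=0$ for line bundles $M$ of suitable degree, and reduce that vanishing to surjectivity of $H^0(\cX,\cE^{\oplus r_0}\otimes\pi^*M)\to H^0(\cX,T\otimes\pi^*M)$. The gap is in your justification of that surjectivity, which you correctly identify as the crux. The section $s_0\in H^0(\cX,\cE\otimes\pi^*M)$ you posit --- one whose fibre at each $x\in\Supp T$ is nonzero in \emph{every} character summand of $\cE_x$ --- does not exist when $\Supp T$ meets a stacky point $p_i$. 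A global section of $\cO_{\cX}(-j\cdot p_i)\otimes\pi^*M$ is a morphism $\cO_{\cX}\to\cO_{\cX}(-j\cdot p_i)\otimes\pi^*M$, and its restriction to the residual gerbe $B\mu_{e_i}$ is a map from the trivial representation to the character $\chi_{-j}$; for $j\not\equiv 0\pmod{e_i}$ this is forced to be zero, so every global section of the $j\ne 0$ summands has vanishing fibre at $p_i$. Fibrewise global generation of $\cE\otimes\pi^*M$ therefore fails at stacky points no matter how large $d$ is, and ``hitting $T_x$ character by character'' with global sections is impossible.

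The step is nonetheless salvageable, and the paper's proof shows how: one does not need to reach the whole stalk $(T\otimes\pi^*M)_x$, only its $G_x$-invariant part, because $H^0(\cX,T\otimes\pi^*M)=H^0(X,\pi_*T\otimes M)$ and $\pi_*$ takes invariants at stacky points. Concretely, the paper first arranges the surjection $\alpha\colon\cE^{\oplus r_0}\to T$ to be surjective on $H^0$ using \Cref{L:ggonstack}(2) (which works because $\cO_{\cX}$ is the $j=0$ summand of $\cE$), then pushes the short exact sequence down to $X$ via the exact functor $\pi_*$ and the projection formula, and observes that since $\pi_*T$ is a torsion sheaf on the curve $X$ and $N$ (of degree $\deg L-1\ge 2g$) is base-point free, surjectivity of $H^0(\pi_*\alpha)$ persists after tensoring with $N$; the vanishing $H^1(X,\pi_*(\cE^{\oplus r_0})\otimes N)=0$ then forces $H^1(X,\pi_*(U)\otimes N)=0$. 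If you replace your $s_0$ argument with this descent to the coarse moduli space --- equivalently, note that only the trivial-character summand of $\cE$ together with base-point-freeness of $M$ is needed to reach $H^0(\cX,T\otimes\pi^*M)$ --- the rest of your proof goes through.
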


\begin{proof}
    $T\otimes \cE^\vee$ is torsion and so $T$ is $\cE$-generated by \Cref{L:ggonstack}. Consider a short exact sequence $0\to U \to \cE^{\oplus r_0}\xrightarrow{\alpha} T\to 0$ such that $H^0(\alpha)$ is surjective. We claim that $U\otimes \pi^*L$ is $\cE$-generated, whence the result follows. By \Cref{L:vanishingposdeg} and \Cref{L:vanishinggg}, it suffices to show that $H^1(\cX, U\otimes \pi^*N) = 0$ for all line bundles $N$ with $\deg(N) = \deg(L) - 1$. Applying $\pi_*$ and the projection formula to $0 \to U\otimes \pi^*N\to \cE^{\oplus r_0}\otimes \pi^*N\to T\otimes \pi^*N \to 0$ yields
    \[
    0\to \pi_*(U) \otimes N \to \pi_*(\cE^{\oplus r_0}) \otimes N \xrightarrow{\pi_*(\alpha) \otimes \id} \pi_*(T)\otimes N \to 0.
    \]
    $\pi_*(\alpha)$ is surjective on global sections because $\alpha$ is and since $T$ is a torsion sheaf it follows that $\pi_*(\alpha)\otimes \id$ is also surjective. Since $\pi_*\cE$ is a direct sum of line bundles of degree at least $-1$, $H^1(X,\pi_*(\cE^{\oplus r_0})\otimes N) = 0$ for degree reasons, so $H^1(\cX,U\otimes \pi^*N) = H^1(X,\pi_*(U)\otimes N) = 0$. 
\end{proof}

\begin{lem}
\label{L:twosections}
    Let $X$ denote a smooth proper curve of geometric genus $g\ge 1$ over an infinite field $k$. If $L$ is a line bundle on $X$ of degree at least $2g$ then there is a surjective morphism of sheaves $\cO_X^{\oplus 2}\twoheadrightarrow L$. 
\end{lem}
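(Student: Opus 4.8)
A surjection $\cO_X^{\oplus 2}\twoheadrightarrow L$ is exactly the data of a pair of global sections $s_0,s_1\in H^0(X,L)$ with empty common vanishing locus. By Nakayama's lemma surjectivity of a map of coherent sheaves may be tested on fibres at closed points, and since $L\otimes k(x)$ is one-dimensional over $k(x)$, the fibre map $k(x)^{\oplus 2}\to L\otimes k(x)$ is onto precisely when $s_0$ and $s_1$ do not both vanish at $x$. So the plan is to reduce the lemma to producing two sections of $L$ with no common zero, then to build them in two moves: take one nonzero section, and choose a second that misses its zeros.

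The first input is that $\deg L\ge 2g$ forces $L$ to be base-point free, and in particular $H^0(X,L)\neq 0$. I would verify this after base change to $\olk$: global generation of the evaluation $H^0(X,L)\otimes_k\cO_X\to L$ is faithfully flat local on $X$, and $H^0$ commutes with the flat base change $\Spec\olk\to\Spec k$, so $L$ is globally generated if and only if $L_{\olk}$ is. Over $\olk$ every closed point $p$ is rational, and $\deg L_{\olk}(-p)=\deg L-1\ge 2g-1$, so $H^1(X_{\olk},L_{\olk}(-p))=0$ by Serre duality and a degree count. The long exact sequence attached to
\[
0\to L_{\olk}(-p)\to L_{\olk}\to L_{\olk}\otimes k(p)\to 0
\]
then shows the evaluation map is surjective at $p$, whence $L_{\olk}$, and therefore $L$, is globally generated. (Since $h^0(X,L)=\deg L+1-g\ge g+1\ge 2$, nonzero sections exist in abundance.)

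Now fix a nonzero $s_0\in H^0(X,L)$. As $X$ is an integral curve, the zero divisor $\Div(s_0)$ is a proper closed subscheme, hence supported on a finite set of closed points $x_1,\dots,x_m$. For each $i$ the sections of $L$ vanishing at $x_i$ form the $k$-subspace $H^0(X,L(-x_i))\subsetneq H^0(X,L)$, and this inclusion is \emph{proper} exactly because $x_i$ is not a base point of $|L|$, as established above. The crucial use of the hypothesis is now that $k$ is infinite: a $k$-vector space is never a finite union of proper subspaces, so I may choose $s_1\in H^0(X,L)$ lying in none of the $H^0(X,L(-x_i))$. Then $s_1$ is nonvanishing at each $x_i$, so for every closed point $y$ either $y\notin\Supp\Div(s_0)$ (and $s_0(y)\neq 0$) or $y=x_i$ (and $s_1(y)\neq 0$); in either case the fibre map is surjective. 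Thus $(s_0,s_1)$ defines the desired surjection $\cO_X^{\oplus2}\twoheadrightarrow L$.

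The step I expect to be the genuine obstacle — and the reason the infinitude of $k$ is indispensable — is the last one: over a finite field the finitely many proper subspaces $H^0(X,L(-x_i))$ could genuinely exhaust $H^0(X,L)$, and the argument breaks. The other point requiring care is the base-point-freeness over a non-closed field, since the points $x_i$ may carry nontrivial residue extensions; this is why I would argue global generation after passing to $\olk$ rather than trying to force $H^1(X,L(-x_i))=0$ directly on $X$, where the twist $\deg L-[k(x_i):k]$ need not be large.
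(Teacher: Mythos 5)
Your proof is correct, and it follows the same basic strategy as the paper's: deduce base-point-freeness of $L$ from $\deg L \ge 2g$, then produce two sections with disjoint zero loci by an avoidance argument that uses the infinitude of $k$. The difference is in the packaging of the second step: the paper passes to the morphism $X \to \bP^n$ defined by $|L|$ and picks a general hyperplane $H$ followed by a second hyperplane $H'$ missing $H \cap X$, whereas you stay inside $H^0(X,L)$ and invoke the fact that a vector space over an infinite field is not a finite union of the proper subspaces $H^0(X,L(-x_i))$. These are two dressings of the same idea, but yours has a genuine advantage over a non-algebraically-closed field: the paper cites \cite{HartshorneAG}*{Cor.\ IV.3.2}, which is stated for $k = \olk$, while you explicitly check global generation after the faithfully flat base change to $X_{\olk}$ (where all closed points are rational and the degree count $\deg L_{\olk}(-p) \ge 2g-1$ applies), and you correctly flag why one cannot naively twist by a closed point of large residue degree on $X$ itself. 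So your write-up is, if anything, the more careful of the two; no gaps.
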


\begin{proof}
    By Riemann-Roch, $h^0(X,L) \ge 2$. $L$ is base-point free by \cite{HartshorneAG}*{Cor. IV.3.2} and defines a map $X\to \bP^n$ for some $n\ge 1$. $L$ has at least two linearly independent global sections and thus the map is nonconstant. A general hyperplane $H$ in $\bP^n$ intersects $X$ at $\deg L$ many points $\{P_i\}$. Choose another hyperplane $H'$ avoiding $\{P_i\}$. The equations of $H$ and $H'$ correspond to global sections $s,s'$ of $L$ which do not vanish simultaneously and thus define a surjection $\cO_X^{\oplus 2} \twoheadrightarrow L$.
\end{proof}

The next result is a modification of \cite{orlov2008remarks}*{Lem. 8}. The proof goes through with only minor modifications, but we include it for the convenience of the reader.

\begin{lem}[Orlov's Main Lemma] 
\label{L:Orlovmainlemma}
Let $L$ be a line bundle on $X$ with $\deg L \ge 8g+4n$ and let $F$ denote a vector bundle on $\cX$ with Harder-Narasimhan filtration $0 = F_0\subset F_1\subset \cdots \subset F_n = F$. Suppose there exists a maximal $i$ such that $\mu(F_i/F_{i-1}) \ge 4g+2n$. There are exact sequences 
\begin{enumerate}[label=(\alph*)]
    \item $(\pi^*L^{-1}\otimes \cE)^{\oplus r_1}\xrightarrow{\alpha} \cE^{\oplus r_0}\to F_i\to 0$
    \item $0\to F/F_i \to (\pi^*L\otimes \cE^\vee)^{\oplus s_0}\xrightarrow{\beta} (\pi^*L^2\otimes \cE^\vee)^{\oplus s_1}.$
\end{enumerate}
If $\mu(F_1)<4g+2n$ then there is an exact sequence $0\to F \to (\pi^*L\otimes \cE^\vee)^{\oplus s_0}\xrightarrow{\beta} (\pi^*L^2\otimes \cE^\vee)^{\oplus s_1}.$ 
\end{lem}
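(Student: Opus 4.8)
The plan is to exploit the dichotomy furnished by the Harder-Narasimhan filtration. The maximality of $i$ together with the strict decrease of slopes in \Cref{P:stability} guarantees that every graded piece of $F_i$ is semistable of slope $\ge 4g+2n$, while every graded piece of $F/F_i$ is semistable of slope $<4g+2n$. Sequence (a) will be built from the ``positive'' part $F_i$ by resolving it, and sequence (b) will be obtained from the ``small'' part $F/F_i$ by a dualization that reduces it to (a). The final assertion is exactly (b) in the degenerate case $F_i=0$: the hypothesis $\mu(F_1)<4g+2n$ says the top (largest) Harder-Narasimhan slope is $<4g+2n$, so there is no index with slope $\ge 4g+2n$, whence $F_i=0$ and $F/F_i=F$. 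Thus no separate argument for it is needed.

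I would first establish (a). Since each graded piece $G$ of $F_i$ is semistable of slope $\ge 4g+2n\ge 2g+n$ (this holds for the base change to $\olk$ as well, slopes being preserved), \Cref{L:ssvanishing} gives $H^1(\cX_{\olk},G\otimes\pi^*M)=0$ for every line bundle $M$ on $X_{\olk}$ of degree $\ge -1$; running this through the defining extensions and applying \Cref{L:vanishingposdeg} yields $H^1(\cX_{\olk},\cE^\vee\otimes F_i\otimes\pi^*M)=0$ for $\deg M\ge -1$. Then \Cref{L:vanishinggg}(2) with threshold $d=-1$ and $N=\cO_X$ shows $F_i$ is $\cE$-generated, producing a short exact sequence $0\to U\to\cE^{\oplus r_0}\to F_i\to 0$ with $U$ locally free. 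It remains to prove that $U\otimes\pi^*L$ is $\cE$-generated, since then a surjection $(\pi^*L^{-1}\otimes\cE)^{\oplus r_1}\to U$ gives the map $\alpha$ and hence sequence (a). By \Cref{L:vanishinggg}(2) and \Cref{L:vanishingposdeg} this reduces to $H^1(\cX_{\olk},U\otimes\pi^*M)=0$ for $\deg M=\deg L-1$. As in the proof of \Cref{L:torsionsheafsurjection}, I would tensor the defining sequence of $U$ by $\pi^*M$ and push forward along $\pi$: by tameness ($\pi_*$ is exact and $R^1\pi_*=0$) together with the projection formula, the sequence descends to one of sheaves on the honest curve $X$ in which $\pi_*\cE$ is a sum of line bundles of degree $\ge -1$ by \Cref{P:rounddown}, and $H^1(\cX,U\otimes\pi^*M)=H^1(X,\pi_*U\otimes M)$. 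Here the required vanishing is the curve-level syzygy estimate of \cite{orlov2008remarks}*{Lems. 8, 9}, which applies because $\deg L\ge 8g+4n$ makes the twist large relative to the slopes of $\pi_*F_i$.

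Sequence (b) I would then deduce from (a) by dualization. Set $G=F/F_i$; its graded pieces are semistable of slope $<4g+2n$, so by \Cref{P:propertiesofsemistable} the graded pieces of $G^\vee\otimes\pi^*L$ are again semistable, and by additivity of degree (\Cref{L:degdefinition}) of slope $>-4g-2n+\deg L\ge 4g+2n$. Hence $G^\vee\otimes\pi^*L$ is a vector bundle all of whose Harder-Narasimhan slopes are $\ge 4g+2n$, and part (a) applies to it verbatim, yielding a right-exact presentation $(\pi^*L^{-1}\otimes\cE)^{\oplus s_1}\to\cE^{\oplus s_0}\to G^\vee\otimes\pi^*L\to 0$. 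Twisting by $\pi^*L^{-1}$ gives a presentation of $G^\vee$ by the locally free sheaves $\pi^*L^{-1}\otimes\cE$ and $\pi^*L^{-2}\otimes\cE$; applying $\sheafhom(-,\cO_{\cX})$ (left exact on this right-exact sequence of bundles) and using $G^{\vee\vee}\cong G$ produces exactly $0\to G\to(\pi^*L\otimes\cE^\vee)^{\oplus s_0}\to(\pi^*L^2\otimes\cE^\vee)^{\oplus s_1}$, which is (b).

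The main obstacle is the second half of step (a): controlling the syzygy bundle $U$. Showing $H^1(\cX_{\olk},U\otimes\pi^*M)=0$ for the specific twist $\deg M=\deg L-1$ is equivalent, via Serre duality on $X$, to bounding the maximal slope of $(\pi_*U)^\vee$, i.e. to excluding destabilizing sub-bundles of $\pi_*U$ of very negative slope. This is precisely where the numerical thresholds $8g+4n$ and $4g+2n$ must be calibrated: they have to be large enough that, after pushforward, $\pi_*F_i$ is positive enough for Orlov's curve-level estimates to force the vanishing despite $\pi_*\cE$ being a sum of line bundles of degree $\ge -1$ rather than copies of $\cO_X$. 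Verifying that the stacky corrections coming from $\cE$ and from the fractional degree of $\omega_{\cX}$ do not spoil these bounds is the only genuinely new point beyond \cite{orlov2008remarks}.
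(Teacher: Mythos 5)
Your treatment of part (b), of the degenerate case $\mu(F_1)<4g+2n$, and of the first half of part (a) all track the paper's argument: the paper likewise deduces (b) by applying (a) to $(F/F_i)^\vee\otimes\pi^*L$ and dualizing (this is \Cref{L:Orlovlemmab}, resting on \Cref{L:dualHN} for the dual HN filtration, which you use implicitly), and it likewise gets $\cE$-generation of $F_i$ from \Cref{L:ssvanishing}, \Cref{L:vanishingposdeg} and \Cref{L:vanishinggg}. The problem is the step you yourself flag as ``the main obstacle'': the vanishing of $H^1$ for the syzygy bundle $U$ in $0\to U\to\cE^{\oplus r_0}\xrightarrow{\varphi}F_i\to 0$. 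You propose to push the twisted sequence down to $X$ and then invoke ``the curve-level syzygy estimate of Orlov, Lems.~8, 9.'' This is not a proof: Orlov's Lemma~8 \emph{is} the curve analogue of the statement being proven, and it concerns a presentation of a bundle on $X$ by copies of $\cO_X$, whereas after pushforward you have a presentation of $\pi_*F_i$ by $\pi_*\cE^{\oplus r_0}$ (a sum of line bundles, some of degree $-1$) that was constructed upstairs on $\cX$; its kernel $\pi_*U$ is not the syzygy bundle to which Orlov's lemma applies. The surjectivity of $H^0(X,\pi_*\cE^{\oplus r_0}\otimes M)\to H^0(X,\pi_*F_i\otimes M)$ after twisting does not follow from the untwisted surjectivity (the shortcut used in \Cref{L:torsionsheafsurjection} works only because the quotient there is torsion), and nothing in your sketch supplies it.

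What is actually needed — and what the paper does — is to rerun Orlov's two-section trick directly on the stack. Take $M$ on $X$ of degree $2g+n$, use \Cref{L:twosections} to get $0\to M^\vee\to\cO_X^{\oplus 2}\to M\to 0$, pull it back to $\cX$, and tensor it with $0\to U\to\cE^{\oplus r_0}\to F_i\to 0$ to obtain a $3\times 3$ diagram. Since the HN graded pieces of $F_i\otimes\pi^*M^\vee$ have slope at least $4g+2n-(2g+n)=2g+n$, \Cref{L:ssvanishing} gives $H^1(\cX,F_i\otimes\pi^*M^\vee)=0$, hence $H^0(F_i^{\oplus 2})\to H^0(F_i\otimes\pi^*M)$ is surjective; combined with the surjectivity of $H^0(\varphi^{\oplus 2})$ this forces $H^0(\cE^{\oplus r_0}\otimes\pi^*M)\to H^0(F_i\otimes\pi^*M)$ to be surjective, and then $H^1(\cX,U\otimes\pi^*M)=0$ because $H^1(\cX,\cE^{\oplus r_0}\otimes\pi^*M)=0$ for degree reasons. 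Only then does \Cref{L:vanishinggg} (with $d=2g+n<\deg L$) give $\cE$-generation of $U\otimes\pi^*L$. Your write-up correctly identifies where the difficulty sits and correctly calibrates the numerics, but it omits the one genuinely new construction that closes the argument, so as it stands part (a) is not established.
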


\begin{proof}
    Suppose first that $k = \overline{k}$ and consider (a). By \Cref{L:ssvanishing}, if $G$ is a semistable vector bundle on $\cX$ with $\mu(G) \ge 2g+n$ then $H^1(\cX,G\otimes L) = 0$ for all $L$ with $\deg L \ge -1$. So, by \Cref{L:vanishingposdeg} and \Cref{L:vanishinggg} it follows that $G$ is $\cE$-generated. In particular, $F_i$ is $\cE$-generated, being an iterated extension of semistable vector bundles with slope at least $4g+2n$. So, we may construct a short exact sequence
    \begin{equation}
    \label{E:ses2}
        0\to U  \to \cE^{\oplus r_0}\xrightarrow{\varphi} F_i \to 0
    \end{equation}
    such that $H^0(\varphi)$ is surjective since $\cO_{\cX}$ is a summand of $\cE$. Next, consider a line bundle $M$ on $X$ of degree $2g+n$. By \Cref{L:twosections}, there is a short exact sequence 
    \[
    0\to M^\vee\xrightarrow{(-s'{,}s)^t}\cO_{X}^{\oplus 2} \xrightarrow{(s{,}s')} M \to 0
    \]
    which pulls back to 
    \begin{equation}
        \label{E:ses3}
        0\to \pi^* M^\vee\to \cO_{\cX}^{\oplus 2} \to \pi^*M \to 0.
    \end{equation}
    Forming the tensor product of short exact sequences, \eqref{E:ses2} $\otimes$ \eqref{E:ses3} gives a commutative diagram with exact rows and columns: 
    \begin{center}
        \begin{tikzcd}
            & 0 \arrow[d] & 0 \arrow[d] & 0 \arrow[d]  & \\
            0 \arrow[r] & U \otimes \pi^*M^\vee \arrow[r] \arrow[d] & \cE^{\oplus r_0} \otimes \pi^* M^\vee \arrow[r] \arrow[d]  & F_i \otimes \pi^*M^\vee \arrow[r] \arrow[d] & 0 \\
            0 \arrow[r] & U^{\oplus 2} \arrow[r] \arrow[d] & \cE^{\oplus 2 r_0} \arrow[r,"\varphi^{\oplus 2}"] \arrow[d,"\chi"]& F_i^{\oplus 2} \arrow[r] \arrow[d,"\psi"] & 0 \\
            0 \arrow[r] & U \otimes \pi^*M \arrow[r] \arrow[d] & \cE^{\oplus r_0} \otimes \pi^* M \arrow[r,"\upsilon"] \arrow[d] & F_i \otimes \pi^* M \arrow[d] \arrow[r]  & 0 \\
             & 0 & 0 & 0 &
        \end{tikzcd}
    \end{center}
    Since $F_i\otimes \pi^*M^\vee$ is an extension of semistable vector bundles with slope at least $2g+n$, $H^1(\cX,F_i \otimes \pi^*M^\vee) = 0$ and so $H^0(\psi)$ is surjective. Also, by construction, $H^0(\varphi^{\oplus 2})$ is surjective and so by commutativity, $H^0(\upsilon)$ is surjective. Finally, as $H^1(\cX,\cE^{\oplus r_0} \otimes \pi^* M) = 0$ for degree reasons, it follows that $H^1(\cX,U \otimes \pi^*M) = 0$. So, by \Cref{L:vanishingposdeg} and \Cref{L:vanishinggg}, $U\otimes \pi^*L$ is $\cE$-generated and we obtain an exact sequence $\pi^*L^{-1}\otimes \cE^{\oplus r_1}\xrightarrow{\alpha} \cE^{\oplus r_0}\to F_i \to 0$. This proves (a) in the algebraically closed case. The general case is deduced using flat base change and the observation that we only applied results involving vanishing of cohomology groups. The dual statement (b) is proven in \Cref{L:Orlovlemmab}.
\end{proof}

\begin{proof}[Proof of \Cref{T:Orlovtheorem}]
    By \Cref{L:summand} and \Cref{L:decomp} it suffices to prove that 
    \begin{enumerate}
        \item any torsion sheaf $T$ in the heart $\Coh(\cX)$ is in $\langle H\rangle_1$; and 
        \item any locally free sheaf $F$ in the heart $\Coh(\cX)$ is in $\langle H\rangle_1$.
    \end{enumerate}
    For (1), consider a torsion sheaf $T$. By \Cref{L:torsionsheafsurjection}, we have an exact sequence $(\pi^*L^{-1}\otimes \cE)^{\oplus r_1}\to \cE^{\oplus r_0}\to T \to 0$. By \Cref{L:summand} it follows that $T$ is a summand of $\Cone((\pi^*L^{-1}\otimes \cE)^{\oplus r_1}\to \cE^{\oplus r_0})$ and consequently lies in $\langle H \rangle_1$. For (2), using \Cref{L:Orlovmainlemma}(a,b) and \Cref{L: orlovCone} it follows that $F$ is a summand of the cone of a map 
    \[
    \begin{tikzcd}
      (\pi^* L^{-1}\otimes \cE)^{\oplus r_1} \oplus (\pi^* L\otimes \cE^\vee)^{\oplus s_0}[-1] \arrow[r,"\sbm{\alpha\amp \varepsilon \\ 0 \amp \beta}"]& \cE^{\oplus r_0}\oplus (\pi^* L^2\otimes \cE^\vee)^{\oplus s_1}[-1] 
    \end{tikzcd}
    \]
    and thus $F\in \langle H\rangle_1$. 
\end{proof}

\section{Diagonal dimension of orbifold curves}

Recall the following main result from the paper of Olander \cite{olander2024diagonal}.

\begin{thm}
\label{T:diagDimCurve}
Let $X$ be a smooth projective curve over a field $k$. Then 
\begin{align*}
\ddim(X) = \begin{cases}
    1 & \text{if } h^1(X, \cO_X ) = 0\\
    2 & \text{if } h^1(X, \cO_X ) \ge 1
\end{cases}
\end{align*}
\end{thm}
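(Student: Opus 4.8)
The plan is to handle the two cases of the dichotomy separately, after recording the universal bounds $1 = \dim X \le \rdim(X) \le \ddim(X) \le 2\dim X = 2$ from \cite{BallardFavero}*{Lem. 2.16}, so that $\ddim(X) \in \{1,2\}$ in every case and only the precise value is at issue.

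In the case $h^1(X,\cO_X) = 0$, the curve $X$ has genus zero and the Beilinson resolution of the diagonal settles matters: over $\overline{k}$ one has the short exact sequence $0 \to \cO(-1)\boxtimes\cO(-1) \to \cO\boxtimes\cO \to \cO_\Delta \to 0$ \cite{Beilinsoncollection}, which exhibits $\cO_\Delta \in \langle (\cO \oplus \cO(-1))\boxtimes(\cO\oplus\cO(-1))\rangle_1$, so $\ddim(X_{\overline k}) \le 1$. Since both the diagonal and the box product commute with flat base change, one descends to $\ddim(X) \le 1$, and the lower bound $\ddim(X)\ge 1$ forces equality.

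In the case $h^1(X,\cO_X)\ge 1$ the upper bound gives $\ddim(X)\le 2$, so the content is the lower bound $\ddim(X)\ge 2$, i.e.\ that $\cO_\Delta \notin \langle F\boxtimes G\rangle_1$ for every $F,G\in\DCoh(X)$. The organizing invariant is Hochschild homology: for a smooth projective curve the Hochschild--Kostant--Rosenberg decomposition gives $HH_1(X)\cong H^1(X,\cO_X)$ and $HH_{-1}(X)\cong H^0(X,\omega_X)$, so $HH_i(X)\ne 0$ for some $i\ne 0$ \emph{precisely} when $h^1(X,\cO_X)\ge 1$. The key observation is that a product kernel acts trivially on positive-degree Hochschild homology: by the projection formula and flat base change $\Phi_{F\boxtimes G}(E)\cong R\Gamma(X,E\otimes F)\otimes_k G$, so $\Phi_{F\boxtimes G}$ factors through $\DCoh(k)$, whence its induced map on Hochschild homology factors through $HH_\bullet(k)$, which is concentrated in degree $0$. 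Since $\cO_\Delta$ is the kernel of $\id_{\DCoh(X)}$, a triangle $P_0\to \cO_\Delta\to P_1\to P_0[1]$ with $P_0,P_1\in\langle F\boxtimes G\rangle$ together with additivity of the Hochschild action over triangles would force $\id_{HH_i(X)} = 0$ for $i\ne 0$, contradicting $HH_{\ne0}(X)\ne 0$.

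The hard part is making this robust against the summand-closure built into $\langle -\rangle$: a direct summand of a sum of shifts of $F\boxtimes G$ need not be a product kernel, so additivity of the raw Hochschild action does not by itself force each $P_j$ to act by zero on $HH_{\ne 0}(X)$. To circumvent this I would route the argument through the ghost lemma rather than the additive invariant. Since $\cO_\Delta\in\langle F\boxtimes G\rangle_1$, any composite of two $(F\boxtimes G)$-ghost maps out of $\cO_\Delta$ must vanish, and this vanishing is automatically stable under passing to summands. It then suffices to produce two $(F\boxtimes G)$-ghost maps out of $\cO_\Delta$ whose composite is nonzero; the natural candidates are built from the nonzero classes in $HH_{\pm1}(X)$, which exist exactly because $h^1(X,\cO_X)\ge 1$. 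That these maps are ghost for the product kernel reflects the fact that $\Phi_{F\boxtimes G}$ cannot detect positive-degree Hochschild data, while their composite survives because the identity functor does detect it. Simultaneously verifying the ghost property and the nonvanishing of the composite is the crux of the lower bound, and here the argument follows \cite{olander2024diagonal} with only the minor modifications needed to keep track of $H^1(X,\cO_X)$ rather than the genus directly.
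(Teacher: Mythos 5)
Your overall skeleton (bound $\ddim(X)\in\{1,2\}$, then split on $h^1$) matches the strategy, but note that the paper does not prove this statement itself: it quotes it from Olander, whose argument for the lower bound is of a completely different nature from what you propose. Olander (and the paper, in its orbifold generalization in \Cref{T:diagDimOrbCurve}) first uses the hereditary splitting $E\cong\bigoplus H^i(E)[-i]$ to reduce to the case where $\cO_\Delta$ is a summand of the cone of a map $E\boxtimes F\to G\boxtimes H$ of \emph{vector bundles}, and then runs a slope argument: Harder--Narasimhan filtrations of $E,F,G,H$ induce slope filtrations of the box products, and the adjunctions $\RHom(\cO_\Delta,E\boxtimes F)\cong \mathbf{R}\Gamma(X,E\otimes F\otimes T_X)[-1]$ and $\RHom(E\boxtimes F,\cO_\Delta)\cong\RHom(E\otimes F,\cO_X)$ kill the relevant Hom spaces for degree reasons once $\deg\omega_X\ge 0$, producing the contradiction. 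No Hochschild homology appears.

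The gap in your version is exactly where you locate it, and it is not repaired by the citation: you must produce two $(F\boxtimes G)$-ghost maps out of $\cO_\Delta$ with nonzero composite, you do not construct them, and the argument does not ``follow \cite{olander2024diagonal} with only minor modifications'' because that paper contains no ghost/Hochschild argument. Worse, the natural candidates fail. Maps out of $\cO_\Delta$ built from ``Hochschild classes'' live in $\Ext^\bullet_{X\times X}(\cO_\Delta,\cO_\Delta)=\mathrm{HH}^\bullet(X)=\bigoplus_{p+q=\bullet}H^p(X,\wedge^qT_X)$ (Hochschild \emph{cohomology}; elements of $\mathrm{HH}_{\pm1}$ are not composable endomorphisms of the diagonal), and the class supplied by $h^1(X,\cO_X)\ne 0$ sits in the component $H^1(X,\cO_X)\subset\mathrm{HH}^1(X)$. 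The Yoneda square of two such classes lands in $H^2(X,\cO_X)=0$, so for $g\ge 2$ (where $H^0(X,T_X)=0$) the composite of any two degree-one self-maps of $\cO_\Delta$ of this type vanishes identically, and the ghost lemma yields no contradiction. You would need ghost maps into auxiliary targets rather than self-maps of the diagonal, and verifying simultaneously the ghost property and the nonvanishing of the composite is the entire content of the theorem; as written it is asserted, not proved. Separately, in the $h^1=0$ case your descent from $\overline{k}$ is not automatic: knowing $\cO_{\Delta}\in\langle F\boxtimes G\rangle_1$ over $\overline{k}$ does not produce $F,G$ defined over $k$ (e.g.\ $\cO(-1)$ does not descend to a nonsplit conic); Olander treats Brauer--Severi curves directly, which is why the paper's footnote singles out \cite{olander2024diagonal}*{Prop. 11}.
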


This is to say that of the two possible values for diagonal dimension of a smooth curve provided by \Cref{L:dimbound}, the answer depends on the curve's arithmetic genus. In this section we explain the modifications needed to generalize this result to the setting of orbifold curves. For simplicity, in this section we assume $k$ is algebraically closed.

As explained in \S1, for $X$ a scheme, $\ddim(X) = \min\{n: \cO_{\Delta} \in \langle F\boxtimes G\rangle_n, F,G \in \DCoh(X)\}$. For schemes, $\cO_\Delta$ can equivalently be thought of as the pushforward of $\cO_X$ along the diagonal morphism $\Delta : X \to X \times X$, or as the structure sheaf of the diagonal subscheme of $X \times X$. For stacks, these two notions may no longer agree as $\Delta$ is not in general even a monomorphism. 

\begin{defn}
    Let $\cX$ be an algebraic stack, and denote the diagonal morphism by $\Delta : \cX \to \cX \times \cX$. We define $\cO_\Delta := \Delta_* (\cO_\cX)$.
\end{defn}

To mimic Olander's proof, we need the following strengthening of \cite{BallardFavero}*{Lem. 2.16}. 

\begin{lem}
\label{L:dimbound} 
Let $\cX$ be a smooth, tame, and proper stack with quasi-projective coarse moduli space, then $\rdim(\cX) \leq \ddim(X) \leq 2 \dim(\cX)$. 
\end{lem}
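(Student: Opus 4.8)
The plan is to prove the two inequalities separately, transporting to the stacky setting the two halves of \cite{BallardFavero}*{Lem. 2.16} (which themselves repackage \cite{Rouquier_2008}). Write $d = \dim\cX$ and let $p_1,p_2\colon \cX\times\cX \to \cX$ be the two projections. Because $\cX$ is smooth, tame, and proper, the integral-transform formalism is available on the product: for a kernel $\cK\in\DCoh(\cX\times\cX)$ I set $\Phi_{\cK}(-) = \mathbf{R}p_{2*}\big(\mathbf{L}p_1^*(-)\otimes^{\mathbf L}\cK\big)$, an exact endofunctor of $\DCoh(\cX)$ (this preserves $\DCoh$ since $\cX\times\cX$ is smooth, so coherent complexes are perfect, and $\mathbf{R}p_{2*}$ preserves coherence by properness).

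For the lower bound $\rdim(\cX)\le\ddim(\cX)$, I would first record two identities. By the projection formula for the finite, unramified, representable morphism $\Delta$ together with $p_1\circ\Delta = p_2\circ\Delta = \id_\cX$, one has $\mathbf{L}p_1^*E\otimes^{\mathbf L}\Delta_*\cO_\cX \simeq \Delta_*\,\mathbf{L}\Delta^*\mathbf{L}p_1^*E \simeq \Delta_*E$, and pushing forward along $p_2$ gives $\Phi_{\cO_\Delta}\simeq\id$; here it is essential that $\cO_\Delta$ means $\Delta_*\cO_\cX$, since $\Delta$ is not a closed immersion. Second, for an external product, the projection formula and flat base change over $\Spec k$ give $\Phi_{F\boxtimes G}(E)\simeq \mathbf{R}\Gamma(\cX, E\otimes^{\mathbf L}F)\otimes_k G$, which (by properness) is a \emph{finite} complex of $k$-vector spaces tensored with $G$, hence a finite sum of shifts of $G$ lying in $\langle G\rangle_0$.

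Now suppose $\ddim(\cX)=n$, witnessed by $F,G$ with $\cO_\Delta\in\langle F\boxtimes G\rangle_n$. For a fixed $E\in\DCoh(\cX)$ the assignment $\cK\mapsto\Phi_{\cK}(E)$ is an exact functor $\DCoh(\cX\times\cX)\to\DCoh(\cX)$, so it carries $\langle F\boxtimes G\rangle_n$ into $\langle \Phi_{F\boxtimes G}(E)\rangle_n\subseteq\langle G\rangle_n$. Applying this to $\cO_\Delta$ and invoking $\Phi_{\cO_\Delta}(E)\simeq E$ shows $E\in\langle G\rangle_n$ for \emph{every} $E$; thus $G$ strongly generates $\DCoh(\cX)$ in time at most $n$, whence $\rdim(\cX)\le n=\ddim(\cX)$. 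I expect this half to be routine once base change and the projection formula are granted for tame proper Deligne--Mumford stacks.

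For the upper bound $\ddim(\cX)\le 2d$ I would adapt Rouquier's resolution of the diagonal (as organised in \cite{BallardFavero} and used by Olander in \cite{olander2024diagonal}): the goal is to exhibit $\cG,\cG'$ with $\cO_\Delta\in\langle\cG\boxtimes\cG'\rangle_{2d}$. Using that the coarse space is quasi-projective and that $\cX$ has the resolution property (\Cref{P:K0computation} and \cite{Totaroresolution}), I would fix a generating sheaf $\cE$ and an ample box line bundle on $\cX\times\cX$, and construct a length-$2d$ resolution of $\cO_\Delta$ by external products of such bundles; the bound $2d$ reflects that $\cX\times\cX$ is smooth of dimension $2d$ and that $\cO_\Delta=\Delta_*\cO_\cX$ is the perfect pushforward along the codimension-$d$ morphism $\Delta$. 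The hard part is exactly this construction: on a stack $\Delta$ is only finite and unramified rather than a closed immersion, so one cannot cut out the diagonal by a regular sequence, and locally free resolutions on the factors must be replaced by $\cE$-generated ones. Controlling such a resolution so that it still terminates after $2d$ external-product cones---leaning on generating sheaves and the round-down description of $\pi_*$ in \Cref{P:rounddown}---is the technical heart of the statement.
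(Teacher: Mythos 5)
Your first half is correct and is essentially the paper's argument: the paper proves this lemma by observing that the proof of \cite{BallardFavero}*{Lem.~2.16} uses only (i) the resolution property and (ii) the fact that $\cO_\Delta=\Delta_*\cO_{\cX}$ is the Fourier--Mukai kernel of the identity, both of which hold here (\cite{Totaroresolution}, \cite{HuybrechtsFM}). Your derivation of $\rdim(\cX)\le\ddim(\cX)$ --- $\Phi_{\cO_\Delta}\simeq\id$ via the projection formula for the finite morphism $\Delta$, $\Phi_{F\boxtimes G}(E)\in\langle G\rangle_0$ via K\"unneth, and exactness of $\cK\mapsto\Phi_{\cK}(E)$ preserving the generation filtration --- is a correct unwinding of exactly that citation.

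The upper bound $\ddim(\cX)\le 2\dim(\cX)$, however, is left as a sketch: you declare the construction of a length-$2d$ box-product resolution to be ``the technical heart'' and do not carry it out, so as written this half is a genuine gap. Moreover, the obstacle you flag --- that $\Delta$ is not a closed immersion, so the diagonal cannot be cut out by a regular sequence --- is not actually relevant to the Ballard--Favero argument. That argument never uses a Koszul or regular-sequence presentation of the diagonal; it only needs (a) a (possibly infinite) resolution $\cdots\to A_1\to A_0\to\cO_\Delta\to 0$ with each $A_i$ a summand of $F_i\boxtimes G_i$ for vector bundles $F_i,G_i$, and (b) vanishing of $\Ext^{2d+1}$ between coherent sheaves on $\cX\times\cX$, after which brutal truncation at step $2d$ splits off $\cO_\Delta$ as a summand of an object of $\langle\bigoplus F_i\boxtimes\bigoplus G_i\rangle_{2d}$. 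Point (a) follows from the resolution property on $\cX\times\cX$ together with ampleness pulled back from the quasi-projective coarse space (twists of $\cE\boxtimes\cE$ for a generating sheaf $\cE$ suffice), and point (b) is \Cref{L:homologicaldimension} applied to the smooth tame proper $2d$-dimensional stack $\cX\times\cX$. With these two observations your outline closes up and coincides with the paper's proof; without them, the second inequality remains unproved.
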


\begin{proof}
    Though the result is stated for schemes, the proof of \cite{BallardFavero}*{Lem. 2.16} uses only the resolution property and that $\cO_{\Delta}$ is the Fourier-Mukai kernel of the identity functor. For $\cX$ an orbifold curve, the resolution property holds since $\cX$ is a global quotient \cite{Totaroresolution} and $\Delta_*\cO_{\cX}$ is the Fourier-Mukai kernel of the identity for formal reasons; see \cite{HuybrechtsFM}*{p. 114}.
\end{proof}

In particular, for $\cX$ an orbifold curve by \Cref{T:Orlovtheorem} we have $\ddim(\cX) \in \{ 1,2 \}$. The proof now proceeds in two parts. In the first, Olander shows that if $\cO_\Delta$ is a summand of the cone of a map $E^\bullet \boxtimes F^\bullet \to G^\bullet \boxtimes H^\bullet$, with $E^\bullet,F^\bullet,G^\bullet,H^\bullet \in \DCoh(\cX)$, then $\cO_\Delta$ is a summand of $\Cone(E\boxtimes F \to G\boxtimes H)$ for {\em vector bundles} $E,F,G,H \in\Coh(\cX)$. Olander's proof uses only the following properties, all of which hold in our setting:

\begin{enumerate}
    \item $\cO_\Delta = \bf{R}\Delta_* \cO_\cX$. Since $\cX$ has a presentation as a global quotient \cite{Kresch}, $\Delta$ is affine by \cite{Totaroresolution}*{Prop. 1.3} and hence $\Delta_*$ is exact, i.e. $\Delta_* = \bf{R}\Delta_*$
    \item For all $E \in \DCoh(\cX)$, $E \cong \bigoplus_i H^i(E)[-i]$ by \Cref{L:summand}. 
    \item By \Cref{L: boxProd} (below) , K\"unneth type objects $E \boxtimes F$ for $E, F \in \DCoh(\cX)$ decompose as sums of their shifted cohomology objects.
    \item In $\Coh(\cX$), there are no maps from a torsion sheaf to a locally free sheaf. This can be verified after passing to an \'{e}tale atlas, where the result is standard.
    \item The K\"unneth formula for sheaf cohomology on $\cX\times \cX$ holds in the form of \cite{stacks-project}*{\href{https://stacks.math.columbia.edu/tag/0BEC}{0BEC}}, i.e. for $F,G\in \QCoh(\cX)$ one obtains $H^n(\cX\times \cX, F\boxtimes G) = \bigoplus_{p+q = n} H^p(\cX,F)\otimes_k H^q(\cX,G)$ for all $n \ge 0$.
    \item $\cO_\Delta$ is not a summand of a sheaf with zero-dimensional support. This follows from the fact that the dimension of the support of $\cO_{\Delta}$ is one.
\end{enumerate}

Thus, for any orbifold curve of diagonal dimension $1$, $\cO_\Delta$ is necessarily a summand of the cone of K\"unneth type vector bundles on $\cX\times \cX$. For classical curves, Olander proves that this is impossible when $g(X) \geq 1$, and since $\ddim(\mathbb{P}^1) = 1$ the classification follows.\footnote{Strictly speaking, Olander proves more since he does not use the assumption that $k$ is algebraically closed. In particular, he proves the result for one-dimensional Brauer-Severi varieties \cite{olander2024diagonal}*{Prop. 11}.} 

\begin{lem}
\label{L: boxProd}
Let $\cX$ be a stacky curve, and $F,G$ coherent sheaves on $\cX$. Denote by $p_1, p_2$ the two projections $\cX \times \cX \to \cX$. Then 
\[
    p_1^* F \otimes p_2^* G \cong p_1^*F\otimes^{\bf{L}}  p_2^*G \in \DCoh(\cX \times \cX).
\]
\end{lem}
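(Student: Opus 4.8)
The goal is to show that for coherent sheaves $F, G$ on the stacky curve $\cX$, the exterior tensor product $p_1^*F \otimes p_2^* G$ agrees with the derived exterior tensor product $p_1^* F \otimes^{\bf{L}} p_2^* G$, i.e. that no higher Tor sheaves contribute. The plan is to reduce this to a local computation on an \'etale atlas, where the Tor-vanishing follows from the fact that the local rings of a smooth curve are regular of dimension one.

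First I would observe that since $\pi$ is exact (or more directly since $\cX$ is smooth of dimension one), $p_1^*F$ and $p_2^*G$ are flat over the respective factors, but the derived tensor product is taken over $\cO_{\cX \times \cX}$, so I must control the Tor of the two pullbacks against each other. The key step is to pass to an \'etale atlas: by \Cref{R:atlasbycurve}, there is an \'etale surjection $C \to \cX$ with $C$ a nonsingular curve, which induces an \'etale atlas $C \times C \to \cX \times \cX$. Since derived tensor product and the formation of $\otimes^{\bf{L}}$ commute with flat (in particular \'etale) base change, it suffices to check that $\mathcal{T}or_j^{\cO_{C \times C}}(q_1^* F|_C, q_2^* G|_C) = 0$ for all $j > 0$, where $q_1, q_2$ are the projections on $C \times C$.

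On $C \times C$ the computation is classical: $C$ is a smooth curve, so its local rings are regular local rings of dimension one (discrete valuation rings or their localizations), hence $C \times C$ is a smooth surface and its local rings are regular of dimension two. For a point $(x,y) \in C \times C$, the completed local ring is, up to the residue-field factors, a tensor product $\cO_{C,x} \otimes_k \cO_{C,y}$, and the pullbacks $q_1^* F$ and $q_2^* G$ are respectively extended from the $x$-factor and the $y$-factor. Since $\cO_{C,x}$ is flat over $k$ (it is a localization of a $k$-algebra and $k$ is a field), the stalk of $q_2^* G$ is flat as an $\cO_{C,x}$-module-from-the-other-side, so one can compute $\mathcal{T}or^{\cO_{C \times C}}$ by a flat resolution pulled back from one factor; the relevant higher Tor then vanishes because $q_1^* F$ has a length-one locally free resolution in the first variable (the local rings of $C$ being PIDs) and $q_2^* G$ is flat along that variable. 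Concretely, resolving $F$ \'etale-locally by $0 \to \cO^a \to \cO^b \to F|_U \to 0$ (possible since $\cX$ has the resolution property and dimension one) and pulling back along $q_1$, tensoring with the flat sheaf $q_2^* G$ stays exact, which kills all higher Tor.

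The main obstacle, which is really a bookkeeping point rather than a deep difficulty, is justifying that the derived exterior tensor product can be computed \'etale-locally and that the Tor-vanishing on the atlas descends to $\cX \times \cX$. This is handled by the flatness of \'etale morphisms together with the fact that $C \times C \to \cX \times \cX$ is itself an \'etale atlas, so a complex on $\cX \times \cX$ has no higher cohomology sheaves if and only if its pullback to $C \times C$ does. I would therefore structure the proof as: (i) reduce to checking $\mathcal{T}or^{>0}$ vanishing after \'etale base change to $C \times C$; (ii) resolve $F$ by locally free sheaves of length one using the resolution property and one-dimensionality; (iii) invoke flatness of the other pullback to conclude the tensor product is already derived, so $p_1^*F \otimes p_2^*G$ carries no higher Tor and the two products coincide.
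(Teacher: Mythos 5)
Your proof is correct and follows essentially the same route as the paper: reduce by faithful flatness of the \'etale atlas $C\times C\to \cX\times\cX$ to the case of a product of smooth curves, then kill the higher $\mathcal{T}or$ there. The only difference is that the paper outsources the final step to \cite{olander2024diagonal}*{Lem.\ 3}, whereas you unpack it directly (length-one locally free resolution in the first factor, flatness over $k$ in the second), which is a valid substitute.
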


\begin{proof}
    Let $\pi: C \to \cX$ be an atlas of $\cX$ as in \Cref{R:atlasbycurve}. Then $C \times C \to \cX \times \cX$ is an atlas for $\cX\times \cX$, where we also denote the two projection maps for $C$ by $p_i$. Since $C\times C \to \cX\times \cX$ is faithfully flat it suffices to prove $p_1^*F\otimes p_2^*G\cong p_1^*F\otimes^{\bf{L}} p_2^*G$ in $\DCoh(C\times C)$, which is equivalent to $H^i(p_1^*F\otimes^{\bf{L}} p_2^*G) = 0$ for all $i\ne 0$. Now, \cite{olander2024diagonal}*{Lem. 3} implies the result. 
\end{proof}

We record the following adjunctions, to be used in the proof of \Cref{T:diagDimOrbCurve}.

\begin{lem}
\label{L: diagonaladjunctions}
    Let $E, F \in \DCoh(X)$. Then 
\[
    \RHom(\cO_{\Delta}, E \boxtimes F) = \mathbf{R}\Gamma(\cX, E \otimes_{\cO_{\cX}}^{\bf{L}} F \otimes_{\cO_{\cX}}^{\bf{L}} T_{\cX})[-1],
\]
    and 
\[
    \RHom(E \boxtimes F, \cO_{\Delta})= \RHom(E \otimes_{\cO_{\cX}}^{\bf{L}} F, \cO_{\cX} ).
\]
\end{lem}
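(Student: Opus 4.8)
The plan is to prove both identities using only the adjunction $\mathbf{L}\Delta^* \dashv \Delta_*$ together with Serre duality on $\cX$ and on $\cX \times \cX$, thereby avoiding any direct computation of the twisted inverse image $\Delta^!$ (which is delicate here since, as noted above, $\Delta$ need not be a monomorphism). The two basic inputs I would record first are that $\Delta$ is finite—hence affine, so that $\Delta_* = \mathbf{R}\Delta_*$ is exact and satisfies the projection formula—and that $\mathbf{L}\Delta^*(E\boxtimes F) \cong E \otimes^{\mathbf{L}}_{\cO_\cX} F$, which follows from $p_i\circ\Delta = \id_\cX$ and monoidality of $\mathbf{L}\Delta^*$ applied to $E\boxtimes F = p_1^*E\otimes^{\mathbf{L}}_{\cO_{\cX\times\cX}}p_2^*F$.

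The second identity is then immediate: since $\cO_\Delta = \Delta_*\cO_\cX$, adjunction gives
\[
\RHom(E\boxtimes F, \cO_\Delta) \cong \RHom(\mathbf{L}\Delta^*(E\boxtimes F), \cO_\cX) \cong \RHom(E\otimes^{\mathbf{L}}_{\cO_\cX} F, \cO_\cX).
\]
For the first identity I would run Serre duality twice. Write $n=\dim\cX = 1$, so that $\cX\times\cX$ is a smooth, proper, tame DM stack of dimension $2$ carrying the Serre functor $-\otimes\omega_{\cX\times\cX}[2]$ with $\omega_{\cX\times\cX}\cong\omega_\cX\boxtimes\omega_\cX$ (by \cite{GeometricitiyBerghLuntsSchnurer}*{Thm. 6.4}); recall also $T_\cX\cong\omega_\cX^{-1}$. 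Applying Serre duality on $\cX\times\cX$ and the projection formula $\cO_\Delta\otimes(\omega_\cX\boxtimes\omega_\cX)\cong\Delta_*(\Delta^*(\omega_\cX\boxtimes\omega_\cX))=\Delta_*(\omega_\cX^{\otimes 2})$ gives
\[
\RHom(\cO_\Delta, E\boxtimes F)\cong\RHom(E\boxtimes F, \Delta_*(\omega_\cX^{\otimes 2})[2])^\vee,
\]
where $(-)^\vee$ is the $k$-linear dual. The adjunction above turns the right-hand side into $\big(\RHom(E\otimes^{\mathbf{L}}_{\cO_\cX} F, \omega_\cX^{\otimes 2})[2]\big)^\vee$. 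A second use of Serre duality, now on $\cX$, rewrites $\RHom(E\otimes^{\mathbf{L}}_{\cO_\cX}F,\omega_\cX^{\otimes 2})$ as the $k$-dual of $\mathbf{R}\Gamma(\cX, E\otimes^{\mathbf{L}}_{\cO_\cX}F\otimes\omega_\cX^{-1})[1]=\mathbf{R}\Gamma(\cX, E\otimes^{\mathbf{L}}_{\cO_\cX}F\otimes T_\cX)[1]$. Since $\cX$ is smooth and proper, $\DCoh(\cX)=\Perf(\cX)$ and $\mathbf{R}\Gamma$ has finite-dimensional cohomology, so the two $k$-linear duals cancel; collecting the shifts $[2]$ and $[1]$ together with the sign reversal produced by dualization yields exactly $\mathbf{R}\Gamma(\cX, E\otimes^{\mathbf{L}}_{\cO_\cX}F\otimes T_\cX)[-1]$.

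The routine parts are the adjunction and the bookkeeping of shifts; the part needing genuine justification in the stacky setting is that the tools I invoke are actually available. Concretely, I would cite the Serre-functor statement for tame smooth proper DM stacks so that both $\cX$ and $\cX\times\cX$ carry Serre functors of the stated form, verify that tameness, properness, and projectivity are inherited by the product and that $\omega_{\cX\times\cX}\cong\omega_\cX\boxtimes\omega_\cX$, and use that $\Delta$ is finite—hence affine, as in the proof of \Cref{L:dimbound}—to guarantee exactness of $\Delta_*$ and the projection formula. The one point I would treat most carefully is the external-product identification $\omega_{\cX\times\cX}\cong\omega_\cX\boxtimes\omega_\cX$ and the finiteness of $\Delta$; once these are in hand the argument is entirely formal. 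As an alternative I could instead compute $\Delta^!\cO_{\cX\times\cX}\cong T_\cX[-1]$ and use $\Delta_*\dashv\Delta^!$, but verifying this shriek computation for the diagonal—which is not a closed immersion—is precisely the step the Serre-duality route is designed to circumvent.
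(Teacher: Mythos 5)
Your proof is correct and is essentially the paper's argument: the paper packages your two applications of Serre duality into the single formula $\Delta^! = S_{\cX}\circ \mathbf{L}\Delta^*\circ S_{\cX\times\cX}^{-1}$ and then invokes the adjunction $\Delta_*\dashv \Delta^!$, which is precisely your explicit double-dualization (and shift bookkeeping) unpacked. The second identity is handled identically in both, via $\mathbf{L}\Delta^*\dashv \Delta_*$ and $\mathbf{L}\Delta^*(E\boxtimes F)\cong E\otimes^{\mathbf{L}}_{\cO_{\cX}}F$.
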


\begin{proof}
Consider $\Delta_* = \mathbf{R}\Delta_* : \DCoh(\cX) \to \DCoh(\cX\times \cX) $. We have adjunctions $\mathbf{L} \Delta^* \dashv \Delta_* \dashv \Delta^!$ where $\Delta^! = S_\cX \circ \mathbf{L}\Delta^* \circ S_{\cX \times \cX}^{-1}$. Then
\begin{align*}
\RHom(\Delta_* \cO_\cX , F \boxtimes G) &\cong \RHom(\cO_\cX, \Delta^!(F \boxtimes G) )\\
&= \RHom(\cO_\cX, S_\cX( \mathbf{L}\Delta^* (F \boxtimes G \otimes \omega_{\cX \times \cX}^\vee [-2]) ))\\
&\cong \bf{R}\Gamma(\cX, \mathbf{L}\Delta^* (F \boxtimes G \otimes \omega_{\cX \times \cX}^\vee) \otimes \omega_\cX [-1] )
\end{align*}
But now $\mathbf{L}\Delta^* (F \boxtimes G \otimes \omega_{\cX \times \cX}^\vee) = \mathbf{L}\Delta^*(p_1^* F \otimes^\mathbf{L} p_2^* G \otimes \omega_{\cX \times \cX}^\vee ) = F \otimes^\mathbf{L} G \otimes \Delta^*(\omega_{\cX \times \cX}^\vee)$ as required, and $T_\cX := \omega_\cX^\vee \cong \Delta^*(\omega_{\cX \times \cX}^\vee)\otimes \omega_\cX$. The second formula follows immediately from the adjuction between $\mathbf{L}\Delta^*$ and $\bf{R} \Delta_*$ since $\mathbf{L}\Delta^* (E \boxtimes F) = E \otimes_{\cX}^{\mathbf{L}} F$.
\end{proof}

\begin{thm}
\label{T:diagDimOrbCurve}
Let $\cX$ be an orbifold curve, $\ddim(\cX) = 1$ if and only if $\deg(\omega_{\cX}) < 0$. Otherwise, $\ddim(\cX) = 2$.
\end{thm}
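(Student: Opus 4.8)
The plan is to reduce the statement to two cases, $\deg(\omega_{\cX}) < 0$ and $\deg(\omega_{\cX}) \ge 0$, and treat each separately using the adjunction formulas of \Cref{L: diagonaladjunctions} together with the reduction (properties (1)--(6) above) to K\"unneth-type \emph{vector bundles}. By \Cref{L:dimbound} and \Cref{T:Orlovtheorem} we already know $\ddim(\cX) \in \{1,2\}$, so it suffices to determine exactly when $\ddim(\cX) = 1$, i.e. when $\cO_\Delta$ can be realised as a summand of $\Cone(E\boxtimes F \to G\boxtimes H)$ for vector bundles $E,F,G,H$ on $\cX$.

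First I would handle the harder direction: showing $\ddim(\cX) = 2$ whenever $\deg(\omega_{\cX}) \ge 0$. Following Olander's strategy, suppose for contradiction that $\cO_\Delta$ is a summand of a cone $\Cone(E\boxtimes F \xrightarrow{\psi} G\boxtimes H)$ with $E,F,G,H$ vector bundles. The existence of such a summand realisation gives, via the long exact sequence and the splitting $\cO_\Delta \hookrightarrow \Cone \twoheadrightarrow \cO_\Delta$, a nonzero map $\cO_\Delta \to \cO_\Delta$ factoring in a controlled way through the cone, hence nontrivial elements in $\RHom(\cO_\Delta, G\boxtimes H)$ and $\RHom(E\boxtimes F, \cO_\Delta)$ in appropriate degrees. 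Applying \Cref{L: diagonaladjunctions}, these translate into the data
\[
    \RHom(\cO_\Delta, G\boxtimes H) = \mathbf{R}\Gamma(\cX, G\otimes^{\bf L} H \otimes^{\bf L} T_{\cX})[-1], \qquad \RHom(E\boxtimes F, \cO_\Delta) = \RHom(E\otimes^{\bf L} F, \cO_{\cX}),
\]
and the point is to extract a cohomological obstruction from the hypothesis $\deg(\omega_{\cX}) \ge 0$, equivalently $\deg(T_{\cX}) \le 0$. Concretely, I would pass to the identity-summand statement: the composite $\cO_\Delta \to G\boxtimes H \to \cO_\Delta[?]$ (or its Fourier--Mukai interpretation as endofunctors) must recover the identity up to the splitting, and tracking Euler characteristics or $H^0/H^1$-ranks using the K\"unneth formula (property (5)) together with Serre duality for $\cX$ should force a contradiction exactly when the twist by $T_{\cX} = \omega_{\cX}^\vee$ fails to produce enough sections, which is governed by the sign of $\deg(\omega_{\cX})$. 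The main obstacle is making the numerical obstruction precise in the orbifold setting: one must verify that Olander's genus-based argument (which used $h^1(X,\cO_X) = \dim H^1$ and the adjunction $\RHom(\cO_\Delta,\cO_\Delta) = \mathbf{R}\Gamma(\cX,T_{\cX})[-1]$) goes through with $\deg(\omega_{\cX})$ replacing the arithmetic genus, keeping careful track of the stacky contributions to degree from \Cref{D:degree} and the vanishing results of \Cref{L:ssvanishing}.

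For the converse direction, I would show that $\deg(\omega_{\cX}) < 0$ implies $\ddim(\cX) = 1$ by exhibiting an explicit K\"unneth-type resolution of $\cO_\Delta$. When $\deg(\omega_{\cX}) < 0$ the orbifold curve is ``Fano-like,'' and by the Geigle--Lenzing/canonical-algebra picture recalled in the introduction it admits a strong full exceptional collection of line bundles, hence a tilting bundle with hereditary endomorphism algebra; this is precisely the content foreshadowed by \Cref{C:hereditarytilt}. The strategy is to use such an exceptional collection $(\cO, L_1, \ldots, L_m)$ to build a two-term resolution of the diagonal: a Beilinson-type resolution expressing $\cO_\Delta$ as the cone of a single map between sums of box products $L_i^\vee \boxtimes L_i$, analogous to the Beilinson resolution on $\bP^1$ that gives $\ddim(\bP^1) = 1$. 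The key computation is that the hereditary (global dimension one) nature of $\End(\bigoplus_i L_i)$ ensures the diagonal bimodule has a length-one projective resolution, which box-product-transfers into a single-cone presentation of $\cO_\Delta$, yielding $\cO_\Delta \in \langle F\boxtimes G\rangle_1$.

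I expect the main obstacle to be the $\deg(\omega_{\cX}) \ge 0$ direction, specifically pinning down the exact cohomological invariant that obstructs a one-step K\"unneth resolution. In the scheme case Olander uses that for $g \ge 1$ the diagonal cannot split off a cone of vector-bundle box products because of a dimension count in $\Hom(\cO_\Delta, \cO_\Delta) \oplus \Hom(\cO_\Delta,\cO_\Delta[1])$; replicating this requires the orbifold Serre duality (already available via $-\otimes\omega_{\cX}[1]$) and careful bookkeeping of the local representation-theoretic multiplicities $\mathfrak{m}_{p_i}$ from \Cref{D:chorb}, since the stacky points contribute to both degree and to the K\"unneth decomposition. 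The delicate point is ensuring that the obstruction depends only on $\deg(\omega_{\cX})$ and not on finer invariants, so that the dichotomy is exactly as stated; I would verify this by reducing all relevant Ext-group computations to the coarse space $X$ via $\pi_*$ and the rounding-down formula of \Cref{P:rounddown}, where Olander's classical argument applies.
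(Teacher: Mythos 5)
Your high-level plan (reduce to K\"unneth-type vector bundles, use the two adjunction formulas, let $\deg(\omega_{\cX})$ govern the dichotomy) matches the paper, but the core of the hard direction is left as a hope rather than an argument, and the mechanism you gesture at is not the one that works. The paper does not extract a contradiction from Euler characteristics or $H^0/H^1$-rank counts; instead it uses the Harder--Narasimhan filtrations of $E,F,G,H$ (this is why \S 2 develops slope stability and HN filtrations on orbifold curves) to filter $E\boxtimes F$ and $G\boxtimes H$ by \emph{total slope}, and then cuts these filtrations at the two thresholds $\deg(\omega_{\cX})$ and $0$: one shows $\cO_\Delta$ is a summand of the cone of $\sum_{\alpha+\beta\ge \deg(\omega_{\cX})}E^\alpha\boxtimes F^\beta \to Q'$ by proving $\Hom\bigl(\sum_{\alpha+\beta>0}G^\alpha\boxtimes H^\beta,\cO_\Delta\bigr)=0$ (via $\RHom(G'\boxtimes H',\cO_\Delta)=\RHom(G'\otimes H',\cO_{\cX})$ and positivity of slope) and $\Hom(\cO_\Delta,Q[1])=0$ (via $\Hom(\cO_\Delta,E'\boxtimes F'[1])=H^0(\cX,E'\otimes F'\otimes T_{\cX})$, which vanishes when $\mu(E')+\mu(F')<\deg(\omega_{\cX})$). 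Your proposal never identifies these two slope cutoffs, which is precisely where $\deg(\omega_{\cX})$ enters and why the answer depends on its sign and on nothing finer. Moreover, your suggested fallback of ``reducing all Ext-group computations to the coarse space $X$ via $\pi_*$ and the rounding-down formula, where Olander's classical argument applies'' cannot work: $\pi_*$ is not fully faithful, and the dichotomy is invisible on $X$ --- for coarse space $\bP^1$ both signs of $\deg(\omega_{\cX})$ occur, so the stacky contributions $\sum_i(e_i-1)/e_i$ to the slope must be retained on $\cX$ itself.

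On the converse direction your route is essentially the paper's (\Cref{C:orbifoldDD}, \Cref{C:hereditarytilt}): in the domestic case $\deg(\omega_{\cX})<0$ one invokes the Geigle--Lenzing derived equivalence with representations of an acyclic (extended Dynkin) quiver and the fact that hereditary path algebras have diagonal dimension one. One correction: the tilting bundle coming from the strong exceptional collection of \emph{line bundles} has endomorphism algebra the canonical algebra $kQ/I$, which has relations and is not hereditary once there are three weighted points; the hereditary algebra arises from a different tilting object, so your inference ``exceptional collection of line bundles, hence hereditary endomorphism algebra'' is false as stated, though the conclusion you need is still available from \cite{GL}.
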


\begin{proof}
    Let $\cX$ be an orbifold curve with $\ddim(\cX) = 1$ so that $\cO_\Delta$ is a summand of a map of vector bundles $E\boxtimes F \to G\boxtimes H$. The bundles $E,F,G,H$ have HN-filtrations by \Cref{P:stability}, which we denote by $0 = E_0 \subset E_1\subset \cdots \subset E_n = E$ and analogously for $F,G,H$. If $\mu(E_i/E_{i-1}) = \mu_i$, we write $E^{\mu_i} = E_i$ to index by slope. These HN filtrations define filtrations of $E\boxtimes F$ and $G\boxtimes H$, where for $\gamma \in \bR$ we have $(E\boxtimes F)^\gamma = \sum_{\alpha +\beta \ge \gamma} E^\alpha \boxtimes F^\beta$. The associated graded pieces are written $\gr^\gamma(E\boxtimes F) = \bigoplus_{\alpha + \beta = \gamma}\gr^\alpha(E)\boxtimes \gr^\beta(F)$. We consider the following short exact sequences: 
    \begin{align*}
        0 \to \sum_{\alpha + \beta \geq \deg(\omega_\cX)} E^\alpha \boxtimes F^\beta \to E \boxtimes F \to Q \to 0 \\
        0 \to \sum_{\alpha + \beta >0} G^\alpha \boxtimes H^\beta \to G \boxtimes H \to Q^\prime \to 0. 
\end{align*}
    Olander's reduction strategy \cite{olander2024diagonal}*{Lem. 5} still applies to show that $\cO_\Delta$ is a summand of the cone of $\varphi: \sum_{\alpha +\beta \ge \deg(\omega_{\cX})} E^\alpha \boxtimes F^\beta \to Q'$. This involves showing $\Hom(\sum_{\alpha +\beta >0}G^\alpha \boxtimes H^\beta,\cO_{\Delta}) = 0 = \Hom(\cO_\Delta, Q[1])$. The first equality is exactly as in \emph{loc. cit.} The second uses the fact that $Q$ has a filtration with associated graded pieces sums of sheaves of the form $E'\boxtimes F'$ with $\mu(E') + \mu(F') < \deg(\omega_{\cX})$. \Cref{L: diagonaladjunctions} now gives $\Hom(\cO_{\Delta},E\boxtimes F[1]) = H^0(\cX,E\otimes F \otimes T_{\cX}) = 0$ for reasons of degree. The remainder of the proof proceeds exactly as in \emph{loc. cit.} except that the case of $g\ge 2$ is replaced by the condition $\deg(\omega_{\cX}) > 0$ and the case $g = 1$ is replaced by $\deg(\omega_{\cX}) = 0$.
\end{proof}

Let $(a_1,a_2,a_3) \in \bZ_{\ge 1}^3$ be given. We write $\bP^1_{(a_1,a_2,a_3)}$ for the orbifold projective line which has three (possibly) stacky points of orders $a_1,a_2,a_3$.  

\begin{cor}
\label{C:orbifoldDD}
    Let $\cX$ be an orbifold curve. $\ddim(\cX) = 1$ if and only if $\cX = \bP^1_{(a_1,a_2,a_3)}$ for $(a_1,a_2,a_3) \in \{(1,p,q),(2,2,r),(2,3,3),(2,3,4),(2,3,5)\}$ where $p,q\ge 1$ and $r\ge 2$. 
\end{cor}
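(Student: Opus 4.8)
The plan is to combine the dichotomy of \Cref{T:diagDimOrbCurve} with the standard degree formula for $\omega_{\cX}$ and then carry out an elementary Diophantine classification. By \Cref{T:diagDimOrbCurve}, $\ddim(\cX) = 1$ precisely when $\deg(\omega_{\cX}) < 0$, so it suffices to determine which orbifold curves satisfy this inequality. I would recall from \cite{VoightZB}*{Prop. 5.5.6} (as used in the proof of \Cref{L:ssvanishing}) that
\[
    \deg(\omega_{\cX}) = 2g(X) - 2 + \sum_{i=1}^n \frac{e_i - 1}{e_i}.
\]
The first observation is that each summand $\frac{e_i-1}{e_i}$ is nonnegative, so if $g(X) \ge 1$ then $\deg(\omega_{\cX}) \ge 2g(X) - 2 \ge 0$. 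Hence $\deg(\omega_{\cX}) < 0$ forces $g(X) = 0$, i.e. the coarse space is $\bP^1$. This reduces the problem to classifying, among rational orbifold curves, those with $\sum_{i=1}^n \left(1 - \tfrac{1}{e_i}\right) < 2$, where each $e_i \ge 2$.

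Next I would bound the number of stacky points. Since $1 - 1/e_i \ge 1/2$ for every $i$, the inequality $\sum (1 - 1/e_i) < 2$ gives $n/2 < 2$, hence $n \le 3$. The cases $n = 0,1,2$ impose no further constraint: for $n \le 2$ one always has $\sum(1 - 1/e_i) \le 2 - \sum 1/e_i < 2$. These are exactly the curves $\bP^1_{(a_1,a_2,a_3)}$ in which at least one index equals $1$, i.e. the family $(1,p,q)$ with $p,q \ge 1$, where the label $a_i = 1$ records a marked point with trivial stabilizer.

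The substantive case is $n = 3$, where, ordering $2 \le e_1 \le e_2 \le e_3$, the inequality becomes $\tfrac{1}{e_1} + \tfrac{1}{e_2} + \tfrac{1}{e_3} > 1$. I would argue as follows: if $e_1 \ge 3$ then the sum is at most $1$, so necessarily $e_1 = 2$; given $e_1 = 2$ we need $\tfrac{1}{e_2} + \tfrac{1}{e_3} > \tfrac{1}{2}$. Splitting on $e_2 = 2$ (which imposes no condition on $e_3$, giving $(2,2,r)$ for all $r \ge 2$), on $e_2 = 3$ (which forces $e_3 < 6$, giving $(2,3,3), (2,3,4), (2,3,5)$), and on $e_2 \ge 4$ (impossible, since then $\tfrac{1}{e_2}+\tfrac{1}{e_3} \le \tfrac{1}{2}$) produces exactly the listed triples.

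There is no deep obstacle here: once \Cref{T:diagDimOrbCurve} is in hand, the content is elementary number theory. The only points requiring care are the bookkeeping around the ordering convention $e_1 \le e_2 \le e_3$ and the interpretation of the label $1$ in the notation $\bP^1_{(a_1,a_2,a_3)}$ as a trivial stabilizer, so that the $n \le 2$ cases are correctly absorbed into the $(1,p,q)$ family and no tuple is double-counted or omitted.
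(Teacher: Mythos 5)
Your Diophantine classification is correct and is essentially the ``explicit calculation'' the paper alludes to: $\deg(\omega_{\cX}) = 2g(X)-2+\sum_i(1-1/e_i) < 0$ forces $g(X)=0$ and at most three stacky points, and the case analysis on $1/e_1+1/e_2+1/e_3>1$ yields exactly the listed triples. Where you diverge from the paper is in how the ``if'' direction is established. You lean entirely on \Cref{T:diagDimOrbCurve} as a proved biconditional, whereas the paper's proof of the corollary does not: after the classification it invokes the Geigle--Lenzing tilting equivalence $\DCoh(\bP^1_{(a_1,a_2,a_3)})\simeq \db(\mathrm{rep}\:Q)$ for $Q$ an acyclic quiver of extended Dynkin type, together with Elagin's computation that $\ddim(kQ)=1$ for acyclic $Q$, to conclude $\ddim(\cX)=1$ for these curves. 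This extra step is not redundant: the written proof of \Cref{T:diagDimOrbCurve} only carries out Olander's obstruction argument, which gives $\deg(\omega_{\cX})\ge 0 \Rightarrow \ddim(\cX)=2$, i.e.\ the ``only if'' direction; the converse implication $\deg(\omega_{\cX})<0 \Rightarrow \ddim(\cX)=1$ is precisely what the quiver argument in the corollary supplies. So as the paper is actually organized, citing \Cref{T:diagDimOrbCurve} for the ``if'' direction is circular, and you should either add the quiver step (classification $\Rightarrow$ extended Dynkin $\Rightarrow$ $\ddim=1$ by \cite{elagin2022calculating}*{Prop.\ 4.5}) or give some other explicit $1$-step decomposition of $\cO_\Delta$ when $\deg(\omega_{\cX})<0$. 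With that repair the rest of your argument, including the bookkeeping of the $n\le 2$ cases into the $(1,p,q)$ family, is sound.
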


\begin{proof}
    By \cite{Taams}, $\deg(\omega_{\cX}) = 2g(X) - 2 +\sum_{i=1}^n \frac{e_i-1}{e_i}$ and can compute that $\deg(\omega_{\cX}) < 0$ is possible only when there are at most three stacky points. Explicit calculation shows that the triples in the statement are the only ones possible. Now, by \cite{GL}*{\S 5.4.1} any such orbifold projective line is derived equivalent to the category of representations of an acyclic quiver (of extended Dynkin type). By \cite{elagin2022calculating}*{Prop. 4.5} the result now follows.
\end{proof}

\begin{cor}
\label{C:hereditarytilt}
    Let $\cX$ be an orbifold curve. $\ddim(\cX) = 1$ if and only if $\cX$ admits a tilting bundle $E$ whose endomorphism algebra is hereditary.
\end{cor}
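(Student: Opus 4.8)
The plan is to prove both implications by routing them through the derived invariance of diagonal dimension together with the classification already obtained, treating the easy (sufficiency) direction first. Suppose $\cX$ (which we may take to be connected) carries a tilting bundle $E$ with $A := \End(E)$ hereditary. Since $\cX$ is projective, $A$ is finite-dimensional over $k$; replacing $E$ by the direct sum of its pairwise non-isomorphic indecomposable summands I may assume $A$ is basic, and connectedness of $\cX$ makes $A$ connected. By Gabriel's theorem a basic connected finite-dimensional hereditary algebra over an algebraically closed field is the path algebra $kQ$ of a finite acyclic quiver $Q$. By \cite{BondalAssociative}*{Thm. 6.2} the bundle $E$ induces an exact equivalence $\DCoh(\cX) \simeq \db(\mathrm{mod}\, kQ)$. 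Because $\ddim$ is an invariant of the enhanced triangulated category (both sides being $\Perf$ of a smooth and compact dg-algebra) and $\ddim(kQ) = 1$ by \cite{elagin2022calculating}*{Prop. 4.5}, I conclude $\ddim(\cX) = 1$. Note this direction uses neither that $E$ is a bundle nor \Cref{T:diagDimOrbCurve}: any tilting object with hereditary endomorphism algebra would suffice.

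For necessity I would invoke the classification. Assuming $\ddim(\cX) = 1$, \Cref{T:diagDimOrbCurve} forces $\deg(\omega_{\cX}) < 0$, so by (the proof of) \Cref{C:orbifoldDD} the stack $\cX$ is one of the finitely many domestic orbifold projective lines $\bP^1_{(a_1,a_2,a_3)}$, each derived equivalent via \cite{GL} to $\db(\mathrm{rep}\, Q)$ for $Q$ an extended Dynkin quiver. I would then appeal to the Geigle--Lenzing theory to upgrade this to a tilting \emph{bundle}: in the domestic case $\Coh(\cX)$ admits a locally free tilting object whose endomorphism algebra is the corresponding tame hereditary algebra $kQ$, and this is exactly the sought-after hereditary tilting bundle.

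The hard part will be precisely this last upgrade. The obvious candidate, the Geigle--Lenzing canonical tilting bundle $\bigoplus_{\vec x} \cO(\vec x)$, does not serve, since its endomorphism algebra is a canonical algebra carrying relations and hence is not hereditary; one must instead exhibit the distinguished domestic tilting bundle. If I wanted to argue intrinsically rather than cite the explicit construction, I would start from an abstract tilting object $T$ with $\End(T) \cong kQ$, use \Cref{L:summand} together with $\mathrm{hd}\,\Coh(\cX) = 1$ to force $T$ (up to shift) into a single cohomological degree, and then use \Cref{L:decomp} and the structure of the module category of a tame hereditary algebra to rule out torsion summands, so that $T$ is locally free. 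Making this reduction airtight --- rather than simply citing the weighted-projective-line literature --- is the delicate point.
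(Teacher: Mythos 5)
Your proposal is correct and follows essentially the same route as the paper: combine the classification coming from \Cref{T:diagDimOrbCurve} and \Cref{C:orbifoldDD} ($\ddim(\cX)=1$ forces $\deg\omega_{\cX}<0$, hence one of the domestic $\bP^1_{(a_1,a_2,a_3)}$) with the Geigle--Lenzing construction of a tilting bundle whose endomorphism algebra is the corresponding tame hereditary algebra, and with Elagin's computation $\ddim(kQ)=1$ plus derived invariance for the converse. The paper's own proof is exactly these citations (\cite{GL}*{\S 5.4.1}, \cite{AbdelgadirUeda}) compressed into two sentences, and your flagged ``hard part'' --- that the canonical tilting bundle $\bigoplus_{\vec x}\cO(\vec x)$ has a canonical algebra with relations as endomorphism algebra, so one must use the distinguished domestic tilting bundle instead --- is a correct and useful clarification of what those references are actually being asked to supply.
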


\begin{proof}
    \cite{GL}*{\S 5.4.1} constructs an exact equivalence $\DCoh(\bP^1_{(a_1,a_2,a_3)}) \simeq \db(\mathrm{rep}\:Q_{(a_1,a_2,a_3)})$ where $Q_{(a_1,a_2,a_3)}$ is an acyclic quiver. The path algebra of the quiver arises as the endomorphism algebra of a tilting bundle coming from a strong exceptional collection \cites{AbdelgadirUeda,GL}.
\end{proof}

\section{Serre dimension and global dimension of orbifold curves}

We begin by computing the Serre dimension of orbifold curves. In fact, the usual proof that $\mathrm{Sdim}(\DCoh(X)) = \dim X$ for a smooth and projective variety $X$ works here without any major modification and we state results in greater generality than we need.

\begin{lem}
\label{L:sdimab}
    Let $\cA$ denote an Abelian category of finite homological dimension and consider its bounded derived category, $\db(\cA)$. Suppose further that 
    \begin{enumerate} 
        \item $\db(\cA)$ has a Serre functor $S$; and
        \item a strong generator $G$.
    \end{enumerate} 
    If there exist $n\in \bZ$ and $m\in \bN$ such that $S^m(\cA) \subset \cA[n]$ then 
    \[
        \lsdim(\db(\cA)) = \usdim(\db(\cA)) = \frac{n}{m}.
    \]
\end{lem}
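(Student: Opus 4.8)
The plan is to compute both Serre dimensions by a squeeze argument that uses only cohomological amplitude estimates, avoiding any delicate nonvanishing statement. Since the upper and lower Serre dimensions are independent of the chosen generators (as recalled in \S1), I would first arrange for a single generator lying in the heart. Starting from the strong generator $G$ of hypothesis (2), set $G_0 := \bigoplus_i H^i(G) \in \cA$, a finite direct sum because $G$ is bounded. The truncation triangles exhibit $G$ as an iterated extension of the shifts $H^i(G)[-i]$, so $G \in \langle G_0\rangle$, whence $\langle G\rangle \subseteq \langle G_0\rangle$ and $\langle G\rangle_N \subseteq \langle G_0\rangle_N$ for all $N$; thus $G_0$ is again a (strong, hence classical) generator, and I may compute $\usdim$ and $\lsdim$ using $G = G' = G_0 \in \cA$ in both slots. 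To avoid notation clash I rename the dummy exponent in the defining limits to $\ell$, keeping $m$ for the fixed period.

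Next I would record the iterated form of the hypothesis. From $S^m(\cA) \subseteq \cA[n]$ one gets $S^{km}(\cA) \subseteq \cA[kn]$ for all $k \ge 0$ by induction: if $A \in \cA$ then $S^m(A) \cong B[n]$ with $B \in \cA$, so $S^{(k+1)m}(A) = S^{km}(B)[n] \in \cA[kn][n] = \cA[(k+1)n]$. Applying the exact functor $S^{km}$ to the truncation triangles of a bounded complex $M$ with cohomology in degrees $[c_1,c_2]$ sends each $H^i(M) \in \cA$ into cohomological degree $i - kn$, so $S^{km}(M)$ has cohomology concentrated in $[c_1 - kn, c_2 - kn]$. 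Writing an arbitrary exponent as $\ell = km + s$ with $0 \le s \le m-1$, and letting $[-b, a]$ be a common cohomological amplitude for the finitely many complexes $S^s(G_0)$, I conclude that $S^\ell(G_0) = S^{km}(S^s G_0)$ has cohomology concentrated in $[-b - kn, \, a - kn]$.

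Finally I would convert amplitude into $\Hom$-vanishing. Let $d := \mathrm{hd}(\cA) < \infty$. For $G_0 \in \cA$ and any $N \in \db(\cA)$ with cohomology in $[c_1,c_2]$, the spectral sequence $E_2^{p,q} = \Ext^p_{\cA}(G_0, H^q(N)) \Rightarrow \Hom^{p+q}(G_0, N)$ has nonzero terms only for $0 \le p \le d$ and $c_1 \le q \le c_2$, so $\Hom^i(G_0,N) = 0$ unless $c_1 \le i \le c_2 + d$; equivalently $e_-(G_0,N) \ge c_1$ and $e_+(G_0,N) \le c_2 + d$. Taking $N = S^\ell(G_0)$ gives $e_-(G_0, S^\ell G_0) \ge -b - kn$ and $e_+(G_0, S^\ell G_0) \le a + d - kn$. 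Dividing by $\ell = km+s$ and letting $\ell \to \infty$ yields $\usdim(\db(\cA)) \le \tfrac{n}{m}$ and $\lsdim(\db(\cA)) \ge \tfrac{n}{m}$. Since one always has $\lsdim \le \usdim$, these combine to $\tfrac{n}{m} \le \lsdim \le \usdim \le \tfrac{n}{m}$, forcing equality throughout, which is the claim.

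The estimates themselves are routine; the step needing care is the reduction to a generator inside $\cA$ together with a clean invocation of the independence of Serre dimension from the choice of generators, and the secondary nuisance of handling exponents $\ell$ not divisible by $m$, which is dispatched above by the uniform amplitude bound over the finitely many residues $s$. I would also note that finiteness of $e_\pm(G_0, S^\ell G_0)$, needed for the limits to be limits of real numbers, is automatic: $S^\ell G_0$ is a nonzero object and $G_0$ generates, so $\RHom(G_0, S^\ell G_0) \ne 0$.
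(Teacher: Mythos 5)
Your proof is correct and follows essentially the same route as the paper's: replace $G$ by the direct sum of its cohomology objects to get a generator in the heart, iterate $S^m(\cA)\subseteq\cA[n]$ to control the cohomological amplitude of $S^\ell(G_0)$, and use the finite homological dimension of $\cA$ to squeeze $e_-$ and $e_+$ between linear bounds of slope $-n/m$. You are in fact slightly more careful than the paper on one point: the paper only estimates along the subsequence of exponents divisible by $m$, whereas you treat arbitrary exponents $\ell=km+s$ via a uniform amplitude bound over the finitely many residues $s$, which is what the $\limsup$/$\liminf$ in the definitions strictly require.
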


\begin{proof}
    Up to replacing it with the sum of its cohomology objects, we may assume that $G$ lies in the heart $\cA$ of the standard t-structure. We compute $e_-(G,S^{mk}(G))$ for all $k\ge 0$. $S^{mk}(G) \in \cA[nk]$ and so $\Hom^i(G,S^{mk}(G)) \ne 0$ implies $i \ge -nk$. So, $\usdim(\db(\cA)) \le n/m$. Next, $\Hom^i(G,S^{mk}(G))\ne 0$ implies $\Ext^{i+nk}(G,G) \ne 0$ and thus $i+nk \le \rm{hd}(\cA)$. So, $-i\ge nk - \rm{hd}(\cA)$. Dividing through by $mk$ and taking $k\to \infty$ gives $\lsdim(\db(\cA)) \ge n/m$.
\end{proof}

\begin{cor}
\label{C:sdualitydm}
    If $\cX$ is a smooth and projective Deligne-Mumford stack over a field $k$ then $\usdim(\db(\cX)) = \lsdim(\db(\cX)) = n$.
\end{cor}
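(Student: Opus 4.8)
The plan is to deduce \Cref{C:sdualitydm} directly from \Cref{L:sdimab} by verifying its hypotheses for $\cA = \Coh(\cX)$, with $n = \dim \cX$ and $m = 1$. First I would observe that $\Coh(\cX)$ has finite homological dimension equal to $n$ by \Cref{L:homologicaldimension}, which handles the standing assumption of the lemma. Next, hypothesis (1) is immediate from the discussion preceding \Cref{L:homologicaldimension}: the functor $S = -\otimes_{\cO_\cX}\omega_\cX[n]$ is a Serre functor on $\db(\cX) = \DCoh(\cX)$ by \cite{GeometricitiyBerghLuntsSchnurer}*{Thm. 6.4}. For hypothesis (2), I would invoke \Cref{T:Orlovtheorem} in the curve case (or more generally the existence of a strong generator for the bounded derived category of a smooth proper DM stack, which follows from the resolution property as in \Cref{L:dimbound}); so $\db(\cX)$ admits a strong generator $G$.

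The crucial remaining point is the containment $S^m(\cA)\subset \cA[n]$ for some $m$ and $n$. Here I would take $m = 1$: since $S(F) = F\otimes_{\cO_\cX}\omega_\cX[n]$ and $\omega_\cX$ is a line bundle, tensoring a coherent sheaf by $\omega_\cX$ again yields a coherent sheaf, so $S(\Coh(\cX)) = \Coh(\cX)\otimes\omega_\cX[n] = \Coh(\cX)[n]$. Thus $S(\cA)\subset \cA[n]$ with $n = \dim\cX$, and \Cref{L:sdimab} gives $\lsdim(\db(\cX)) = \usdim(\db(\cX)) = n/1 = n$, which is exactly the assertion.

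The main (and essentially only) obstacle is conceptual bookkeeping rather than difficulty: one must be sure that the Serre functor is genuinely the $[\dim\cX]$-shifted twist by $\omega_\cX$ in the stacky setting, which is the content of the tame smooth proper Deligne-Mumford Serre duality cited above, and that $\omega_\cX$ is indeed an honest line bundle (so that tensoring preserves the heart exactly on the nose rather than only up to shift). Both are already recorded in the excerpt, so the proof is a clean application: verify the three hypotheses of \Cref{L:sdimab} with $m=1$, $n=\dim\cX$, and conclude. I would phrase the argument in one short paragraph, noting that the only input specific to stacks is the form of the Serre functor, and that everything else is formal.
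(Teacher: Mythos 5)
Your proof is correct and follows essentially the same route as the paper: both verify the hypotheses of \Cref{L:sdimab} for $\cA=\Coh(\cX)$ with $m=1$ and $n=\dim\cX$, using the Serre functor $-\otimes\omega_{\cX}[n]$ from \cite{GeometricitiyBerghLuntsSchnurer} and the existence of a strong generator (the paper cites \cite{BallardFavero}*{Lem.\ 2.20} for the latter, where you appeal to the resolution property, but this is an inessential difference).
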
 

\begin{proof}
    By \cite{GeometricitiyBerghLuntsSchnurer}, $\DCoh(\cX)$ has Serre functor $-\otimes \omega_{\cX}[n]$, where $\omega_{\cX} \in \Coh(\cX)$ is the dualizing sheaf. Furthermore, $\DCoh(\cX)$ has finite Rouquier dimension by \cite{BallardFavero}*{Lem. 2.20}. The result now follows from \Cref{L:sdimab}.
\end{proof}

Here, there are no hypotheses on the field $k$. \Cref{C:sdualitydm} implies in particular that for a stacky curve $\cX$ over a field $k$ the Serre dimension is one. Next, we consider global dimension in the sense of \cite{Ikeda_Qiu_2023}. First, we must show that $\Stab(\cX)\ne \varnothing$ for a projective orbifold curve $\cX$. For simplicity, we work over $\bC$. Since $K_0(\cX)$ is in general of infinite rank by \Cref{P:K0computation}, we require our stability conditions to factor through $\ch_{\rm{orb}}:K_0(\cX)\twoheadrightarrow H^*_{\rm{CR,alg}}(\cX)$, as in \Cref{D:chorb}.

We define a canonical basis of $H^*_{\rm{CR,alg}}(\cX)\otimes_{\bZ} \bR$ by using as a basis for $H^0(X;\bZ)\oplus H^2(X;\bZ)$ the Poincar\'{e} dual classes to the fundamental class and to a point. This gives an identification $H^*_{\rm{CR,alg}}(\cX)\otimes_{\bZ}\bR \cong \bR^{2+N}$ where $N = \sum_{i=1}^n e_i - 1$. Define $\lVert\:\cdot\:\rVert:H^*_{\rm{CR,alg}}(\cX)\otimes_{\bZ} \bR\to \bR$ to be the Euclidean norm with respect to this identification. Consider $\beta \in \bR$ and $H\in \bR_{>0}$. We define a function $Z_{\beta,H}:\Ob(\Coh(\cX)) \to \bC$ by $Z_{\beta,H}(E) = -\deg(E) + (\beta + iH)\rank(E)$, where $\deg(E)$ is as in \Cref{D:degree}.

\begin{thm}
\label{T:stabcond}
    For any $\beta \in \bR$ and $H\in \bR_{>0}$, the pair $\sigma_{\beta,H} = (\Coh(\cX),Z_{\beta,H})$ defines a Bridgeland stability condition on $\DCoh(\cX)$ which satisfies the support property for the orbifold Chern character map $\ch_{\rm{orb}}:K_0(\cX) \twoheadrightarrow H^*_{\rm{CR,alg}}(\cX)$. 
\end{thm}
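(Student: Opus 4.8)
The plan is to verify the three defining axioms of a Bridgeland stability condition for the pair $(\Coh(\cX), Z_{\beta,H})$, together with the support property, following the standard template for curves. The heart $\Coh(\cX)$ is already the heart of the standard t-structure on $\DCoh(\cX)$, so no construction of a tilted heart is needed; this is the key simplification afforded by working with a curve. First I would check positivity: for any nonzero $E \in \Coh(\cX)$, one must show $Z_{\beta,H}(E)$ lies in the strict upper half-plane $\mathbb{H} \cup \mathbb{R}_{<0}$. When $\rank(E) > 0$ this is immediate since $\Im Z_{\beta,H}(E) = H\cdot \rank(E) > 0$. When $\rank(E) = 0$, the sheaf is torsion and $\Im Z = 0$, so I must show $\Re Z_{\beta,H}(E) = -\deg(E) < 0$; this follows from the fact (cited in \Cref{D:slopestability}) that nonzero torsion sheaves on $\cX$ have strictly positive degree by \cite{Taams}*{Thm. 1.2.26}.

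Next I would establish the existence of Harder--Narasimhan filtrations for the slope function determined by $Z_{\beta,H}$. The crucial observation is that the $Z_{\beta,H}$-slope $\Re Z/\Im Z = (\beta\rank(E) - \deg(E))/(H\rank(E))$ is, up to the positive affine reparametrisation $\mu \mapsto (\beta - \mu)/H$, an order-reversing function of the classical slope $\mu(E) = \deg(E)/\rank(E)$ from \Cref{D:slopestability}. Therefore the $Z_{\beta,H}$-semistable objects are exactly the slope-semistable sheaves of \Cref{D:slopestability}, and HN filtrations for $Z_{\beta,H}$ are precisely the slope HN filtrations whose existence is \Cref{P:stability} (with torsion sheaves placed at the top, of maximal phase). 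This reduces the HN axiom to an already-proven result.

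For the support property, I would use the fact that $Z_{\beta,H}$ factors through $\ch_{\rm{orb}}:K_0(\cX)\twoheadrightarrow H^*_{\rm{CR,alg}}(\cX)$: indeed $\deg$ and $\rank$ are determined by the orbifold Chern character via \Cref{D:chorb} and \Cref{L:degdefinition}, so $Z_{\beta,H}$ descends to a linear map on the finite-rank lattice $H^*_{\rm{CR,alg}}(\cX)$. The support property then amounts to producing a quadratic form $Q$ on $H^*_{\rm{CR,alg}}(\cX)\otimes\mathbb{R}$, negative on $\ker Z_{\beta,H}$, with $Q(\ch_{\rm{orb}}(E)) \ge 0$ for all semistable $E$; equivalently, a uniform bound $\lVert \ch_{\rm{orb}}(E)\rVert \le C\,\lvert Z_{\beta,H}(E)\rvert$ on semistable classes in terms of the Euclidean norm introduced before the statement. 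Since $Z_{\beta,H}$ only sees $\deg$ and $\rank$, the content is to bound the stacky multiplicity data $\mathfrak{m}_{p_i}(E)$ against $\lvert Z_{\beta,H}(E)\rvert$ for semistable $E$; the multiplicities are bounded by $\rank(E)$ at each stacky point, and $\rank(E)$ is controlled by $\Im Z_{\beta,H}(E)/H$, giving the required inequality.

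The main obstacle I expect is the support property, and specifically bounding the torsion/stacky contributions $\mathfrak{m}_{p_i}$ uniformly on semistable objects, since these directions of $H^*_{\rm{CR,alg}}(\cX)$ are invisible to the central charge $Z_{\beta,H}$. The resolution is that each $m_{p_i,j}(E)$ is bounded above by $\rank(E)$ for a vector bundle (a character can appear in a fibre representation at most $\rank$ times), so after reducing an arbitrary semistable sheaf to its locally free part via \Cref{L:decomp} and controlling $\rank$ by the imaginary part of the central charge, every coordinate of $\ch_{\rm{orb}}(E)$ is dominated by $\lvert Z_{\beta,H}(E)\rvert$, yielding the desired quadratic form. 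The remaining axiom verifications (local finiteness and the discreteness needed for HN) follow as in the scheme case because the image lattice $H^*_{\rm{CR,alg}}(\cX)$ has finite rank.
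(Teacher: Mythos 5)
Your overall route is the same as the paper's: keep the standard heart $\Coh(\cX)$, get positivity from the fact that nonzero torsion sheaves have positive degree, deduce the HN property from \Cref{P:stability} via the order-reversing reparametrisation of the slope, observe that $Z_{\beta,H}$ factors through $\ch_{\rm{orb}}$, and prove the support property as a uniform lower bound on $\lvert Z_{\beta,H}(E)\rvert/\lVert\ch_{\rm{orb}}(E)\rVert$ over semistable $E$ in the heart. The positivity, HN, and factorisation steps are fine, and your bookkeeping for positive-rank semistable sheaves (which are indeed locally free, so each $m_{p_i,j}(E)\le\rk(E)$ and $\rk(E)=\Im Z_{\beta,H}(E)/H$, with $\deg(\pi_*E)$ then controlled via $\deg(E)$ and the multiplicities) can be made to work.

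The genuine gap is at the rank-zero semistable objects, i.e. the phase-one objects, which are exactly the torsion sheaves. There your mechanism collapses: $\rk(T)=0$ and $\Im Z_{\beta,H}(T)=0$, while $\mathfrak{m}_{p_i}(T)$ is generally nonzero, so the bound ``multiplicity $\le$ rank'' is simply false. Concretely, the simple torsion sheaf $i_{p_i,*}\chi_1$ has class $[\cO_{\cX}(p_i)]-[\cO_{\cX}]$, hence $\rk=0$ but $\mathfrak{m}_{p_i}=(1,0,\dots,0)$; and ``reducing to the locally free part via \Cref{L:decomp}'' does not help, since the locally free part of a torsion sheaf is zero. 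You need a separate estimate here, e.g. filtering $T$ by the simples $i_{p,*}\chi_j$, each of degree $1/e_p$ and with $\ch_{\rm{orb}}$ of bounded norm, which gives $\lVert\ch_{\rm{orb}}(T)\rVert\le C\,(\max_ie_i)\,\deg(T)=C\,(\max_ie_i)\,\lvert Z_{\beta,H}(T)\rvert$. (Note also that such simples show the coordinates of $\ch_{\rm{orb}}$ of a semistable sheaf need not all be non-negative --- $i_{p,*}\chi_2$ has $\mathfrak{m}_p=(-1,1,0,\dots)$ --- so this corner of the argument genuinely requires its own case analysis; the paper's proof instead bounds $\lvert Z\rvert^2$ below by a weighted sum of squares of the coordinates of $\ch_{\rm{orb}}$, arriving at the explicit constant $\min_i 1/e_i^2$.) With the torsion case supplied, your proof goes through; without it, the support property is not established.
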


\begin{proof}
    By \Cref{D:chorb}, the orbifold Chern character map $\ch_{\rm{orb}}:K_0(\cX) \twoheadrightarrow H^*_{\rm{CR,alg}}(\cX) = H^0(X;\bZ)\oplus H^2(X;\bZ) \oplus \bigoplus_{i=1}^n \bZ^{e_i-1}$ is given by $E\mapsto (\deg(\pi_*E),\rk(E),\mathfrak{m}_{p_1}(E),\ldots, \mathfrak{m}_{p_n}(E))$ and it follows that $Z_{\beta,H}$ factors through $\ch_{\rm{orb}}$. The fact that $(\Coh(\cX),Z_{\beta,H})$ defines a pre-stability condition follows from \Cref{P:stability} and \cite{Br07}*{Prop. 5.3}. It remains to verify the support property. It suffices to verify this for $\beta = 0$ and $H = 1$ since the other $\sigma_{\beta,H}$ are in the $\GL_2^+(\bR)^{\sim}$ orbit of $\sigma_{0,1}$; see \cite{Br07}*{Lem. 8.2} and \cite{Bayer_short}*{p.5}. So, put $Z = Z_{0,1}$. It suffices to prove that 
    \[
        \inf \left\{\frac{\lvert Z(\ch_{\rm{orb}}(E))\rvert}{\lVert \ch_{\rm{orb}}(E)\rVert} : E\in \Coh(\cX)\text{ semistable}\right\} > 0.
    \]
    So, we verify that $\lvert Z(x,y)\rvert /\lVert (x,y)\rVert$ is bounded below for all $x\in H^0(X;\bZ)\oplus H^2(X;\bZ)$ and $y\in \bigoplus_{i=1}^n \bZ^{e_i-1}$ which are non-negative, in the sense that they have all non-negative coefficients with respect to the standard basis. Write $x = x_0 + x_2$ and $y = \vec{y}_1 + \cdots + \vec{y}_n$. By the hypothesis that $(x,y)$ is non-negative, we have 
    \[
        \lvert Z(x,y) \rvert^2 \ge x_0^2 + x_2^2 + \sum_{i=1}^n \frac{\lVert \vec{y}_i \rVert^2}{e_i^2}
    \]
    As a consequence, for all $E$ semistable with respect to $\sigma_{0,1}$ in $\Coh(\cX)$
    \[
        \frac{\lvert Z(\ch_{\rm{orb}}(E))\rvert^2}{\lVert \ch_{\rm{orb}}(E)\rVert^2} \ge  \min \left\{\frac{1}{e_i^2}: 1\le i \le n\right\}.
    \]
    Thus, the support property holds and $(\Coh(\cX),Z_{\beta,H})$ defines a stability condition.
\end{proof}

\begin{cor}
\label{C:orbifoldcurvegldim}
    If $\cX$ is an orbifold curve over $\bC$ then $\gldim(\cX) = 1$.
\end{cor}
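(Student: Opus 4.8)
The plan is to prove the two inequalities $\gldim(\cX)\ge 1$ and $\gldim(\cX)\le 1$ separately. For the lower bound I would appeal to \Cref{T:KOT}: since $\cX$ is a smooth and proper Deligne--Mumford stack over $\bC$, it admits a strong generator (\Cref{T:Orlovtheorem}), so $\DCoh(\cX)$ is equivalent to the perfect derived category of a smooth and proper dg-algebra over $\bC$. Hence $\usdim(\DCoh(\cX))\le\gldim(\cX)$, and since $\usdim(\DCoh(\cX))=1$ by \Cref{C:sdualitydm}, we obtain $\gldim(\cX)\ge 1$. All of the work goes into the upper bound, for which I would bound $\gldim(\sigma_{0,H})$ for the stability conditions $\sigma_{0,H}$ of \Cref{T:stabcond} and then let $H$ vary; the reduction to $\beta=0$ is harmless since changing $\beta$ stays within a single $\GL_2^+(\bR)^{\sim}$-orbit and does not affect the global dimension.

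First I would reduce the computation of $\gldim(\sigma_{0,H})$ to graded homomorphisms between $\mu$-semistable sheaves. Since the heart is $\Coh(\cX)$ and $Z_{0,H}$ restricts on it to the slope $\mu$, every $\sigma_{0,H}$-semistable object is a shift of a $\mu$-semistable coherent sheaf (using the Harder--Narasimhan property of \Cref{P:stability}); and since $\Coh(\cX)$ has homological dimension one by \Cref{L:homologicaldimension}, for sheaves $F_1,F_2$ only $\Hom(F_1,F_2)$ and $\Ext^1(F_1,F_2)$ can be nonzero. Writing $\phi(F)\in(0,1)$ for the phase in the heart of a $\mu$-semistable sheaf of positive rank (characterised by $\cot(\pi\phi(F))=-\mu(F)/H$, with torsion sheaves assigned phase $1$, so that $\phi$ is increasing in $\mu$), the global dimension is the supremum of $\phi(F_2)-\phi(F_1)+j$ over $\mu$-semistable $F_1,F_2$ and $j\in\{0,1\}$ with $\Ext^j(F_1,F_2)\ne 0$. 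When $j=0$ a nonzero map forces $\mu(F_1)\le\mu(F_2)$, hence $\phi(F_2)-\phi(F_1)\in[0,1)$. When $j=1$, Serre duality (via the Serre functor $-\otimes\omega_\cX[1]$, cf. \Cref{L: diagonaladjunctions}) gives $\Ext^1(F_1,F_2)\cong\Hom(F_2,F_1\otimes\omega_\cX)^{\vee}$, and $F_1\otimes\omega_\cX$ is again $\mu$-semistable by \Cref{P:propertiesofsemistable}; the cases in which $F_1$ or $F_2$ is torsion are then seen directly to contribute a gap $\le 1$ (for instance there are no nonzero maps from a torsion sheaf to a locally free one).

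The crux is the remaining case, with $F_1,F_2$ both locally free and $\Ext^1(F_1,F_2)\ne 0$. Here the nonzero map $F_2\to F_1\otimes\omega_\cX$ forces $\mu(F_2)\le\mu(F_1\otimes\omega_\cX)$, so $\phi(F_2)\le\phi(F_1\otimes\omega_\cX)$ and the gap is at most $1+\bigl(\phi(F_1\otimes\omega_\cX)-\phi(F_1)\bigr)$. Since $\mu(F_1\otimes\omega_\cX)$ exceeds $\mu(F_1)$ by a fixed amount governed by $\deg(\omega_\cX)$, an elementary estimate on the phase function (using that its derivative in $\mu$ is maximised near $\mu=0$ and decays) shows that $\sup_{F_1}\bigl(\phi(F_1\otimes\omega_\cX)-\phi(F_1)\bigr)\to 0$ as $H\to\infty$. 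Thus $\gldim(\sigma_{0,H})\to 1$, whence $\gldim(\cX)=\inf_\sigma\gldim(\sigma)\le 1$, and together with the lower bound this gives $\gldim(\cX)=1$. I expect this uniform estimate to be the main obstacle: the subtlety is that when $\deg(\omega_\cX)>0$ no single $\sigma_{0,H}$ has global dimension exactly $1$, so one must genuinely pass to the limit $H\to\infty$ to realise the infimum, whereas when $\deg(\omega_\cX)\le 0$ (in particular for honest elliptic curves) the estimate already yields $\gldim(\sigma_{0,H})=1$ for every $H$. Finally, for $\cX$ whose coarse moduli space is $\bP^1$ one may alternatively invoke \cite{OtaniGldim}*{Thm. 3.2}.
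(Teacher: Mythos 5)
Your proposal is correct and follows essentially the same route as the paper: the lower bound via \Cref{T:KOT} and \Cref{C:sdualitydm}, and the upper bound by taking the stability conditions $\sigma_{0,H}$ of \Cref{T:stabcond} and showing, via the uniform continuity of $\mathrm{arccot}$, that the phase shift induced by $-\otimes\omega_{\cX}$ is uniformly small as $H\to\infty$. The only difference is one of packaging: where you compute $\gldim(\sigma_{0,H})$ directly by reducing to graded Homs between $\mu$-semistable sheaves and applying Serre duality, the paper outsources exactly that step to \cite{KOT}*{Prop. 4.3}, which converts the statement that the Serre functor moves phases into $[\phi+1-\epsilon,\phi+1+\epsilon]$ into the bound $\gldim(\sigma_{0,H})\le 1+\epsilon$.
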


\begin{proof}
    We mimic the proof of \cite{KOT}*{Lem. 5.15}. By \Cref{C:sdualitydm} and \Cref{T:KOT} we know that $\gldim(\cX) \ge 1$. Next, by \cite{KOT}*{Prop. 4.3} given $\epsilon>0$ it will suffice to construct $\sigma_\epsilon = (Z,\cP) \in \Stab(\cX)$ such that for any $\phi \in \bR$ one has $S(\cP(\phi)) \subset \cP[\phi+1-\epsilon, \phi+ 1 +\epsilon]$; here, $S = -\otimes \omega_{\cX}[1]$ is the Serre functor on $\DCoh(\cX)$. Put $d = \deg(\omega_{\cX}) = 2g - 2 + \sum_i \tfrac{e_i-1}{e_i}$. $\mathrm{arccot}:\bR \to (0,\pi)$ is uniformly continuous so there exists an $H>0$ such that 
    \[
        \frac{1}{\pi}\left\lvert \mathrm{arccot}\left(x - \tfrac{d}{H}\right) - \mathrm{arccot}\left( x\right)\right\rvert < \epsilon
    \]
    for all $x\in \bR$. Put $\sigma_\epsilon = \sigma_{0,H}$ for some such $H$. Now, if $E$ is $\sigma_\epsilon$-semistable then $\rk(E) = 0$ is equivalent to $E$ being torsion and thus so is $E\otimes \omega_{\cX}$. It follows that $\phi_{\sigma_\epsilon}(E\otimes \omega_{\cX}) = \phi_{\sigma_\epsilon}(E)$. If $\rk(E) \ne 0$, then $Z_{0,H}(E\otimes \omega_{\cX}) = Z_{0,H}(E) - d\cdot \rk(E)$. Thus, one has 
    \[
        \lvert \phi_{\sigma_\epsilon}(E\otimes \omega_{\cX}) - \phi_{\sigma_\epsilon}(E)\rvert = \frac{1}{\pi}\left\lvert \mathrm{arccot}\left(\tfrac{-\deg(E)}{H\cdot \rk(E)} - \tfrac{d}{H}\right) - \mathrm{arccot}\left(\tfrac{-\deg(E)}{H\cdot \rk(E)}\right) \right\rvert < \epsilon.
    \]
    Thus, $S(\cP_{\sigma_\epsilon}(\phi)) \subset \cP_{\sigma_\epsilon}[\phi+1-\epsilon, \phi+1+\epsilon]$.
\end{proof}

\begin{rem}
    We have constructed stability conditions in \Cref{T:stabcond} only in the case of $\cX$ defined over $\bC$. The main reason for this restriction is to have a convenient topological definition of the charge lattice $\Lambda$ through which the central charge factors. In the case where one prefers to work over a general base field, one can use the numerical Grothendieck group $\cN(X)$ in lieu of $H^0(X;\bZ)\oplus H^2(X;\bZ)$. The proof carries through \emph{mutatis mutandis}. On the other hand, \Cref{C:orbifoldcurvegldim} makes use of \Cref{T:KOT} which restricts to the case of the ground field being $\bC$. It seems unlikely that this restriction is necessary, but at present we have not investigated this.
\end{rem}

\appendix

\section{Homological Lemma}

For completeness we provide a proof of the following lemma, used without proof in \cite{orlov2008remarks}, and to be used in \Cref{T:Orlovtheorem}. As such, we have in mind the case where $F$ is a vector bundle on an orbifold curve $\cX$ with sub-bundle $F_i \subseteq F$ and such that $F_i$ and $F/F_i$ fit into the exact sequences from \Cref{L:Orlovmainlemma}:
\begin{align*}
        (\pi^*L^{-1}\otimes \cE)^{\oplus r_1} &\xrightarrow{\alpha} \cE^{\oplus r_0}\to F_i\to 0 \\
        0 \to F/F_i &\to (\pi^*L\otimes \cE^\vee)^{\oplus s_0} \xrightarrow{\beta} (\pi^*L^2\otimes \cE^\vee)^{\oplus s_1}.
\end{align*}

\begin{lem}
\label{L: orlovCone}
Let $\cA$ an Abelian category of homological dimension one with enough injectives. Let $F \in \cA$ with subobject $F_i \subseteq F$, and suppose we have the following exact sequences in $\cA$
    \begin{align}
        L_{-1} &\xrightarrow{\alpha} L_0 \to F_i\to 0 \label{E:(6)}\\
        0 \to F/F_i &\to L_1 \xrightarrow{\beta} L_2\label{E:(7)}
    \end{align}
    for some objects $L_{-1}, L_0, L_1, L_2 \in \cA$. Then $F$ is a summand of $\Cone(\phi) \in \rm{D}(\cA)$, where $\phi$ is the map  
    \[
    \begin{tikzcd} 
        L_{-1} \oplus L_1[-1] \arrow[r,"\sbm{\alpha \amp \varepsilon\\ 0\amp \beta}"] &
        L_0 \oplus L_2 [-1] 
    \end{tikzcd}
    \]
\end{lem}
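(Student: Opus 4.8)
The plan is to reconstruct $\Cone(\phi)$ from the two ``column'' cones $\Cone(\alpha)$ and $\Cone(\beta)$ using the upper–triangular shape of $\phi$, exploiting the hypothesis $\operatorname{hd}(\cA)=1$ at every turn to split off unwanted summands and to kill higher $\Ext$ obstructions. Write $G:=F/F_i$ and let $\eta\in\Ext^1(G,F_i)$ denote the class of the extension $0\to F_i\to F\to G\to 0$, so that $F$ is determined by the triangle $F_i\to F\to G\xrightarrow{\eta}F_i[1]$. The goal is to choose $\varepsilon$ so that this same class is reproduced inside $\Cone(\phi)$.

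First I would analyse the two cones. From \eqref{E:(6)} the complex $\Cone(\alpha)=[L_{-1}\xrightarrow{\alpha}L_0]$ has $H^0=\coker\alpha=F_i$ and $H^{-1}=\ker\alpha$; from \eqref{E:(7)} the complex $\Cone(\beta)[-1]$ has cohomology $G=\ker\beta$ and $\coker\beta$. Since $\cA$ has homological dimension one, every object of $\db(\cA)$ splits as the sum of its shifted cohomology objects (the obstruction to peeling off the top cohomology lives in an $\Ext^2$, which vanishes). Hence $\Cone(\alpha)\cong F_i\oplus(\ker\alpha)[1]$ and $\Cone(\beta)[-1]\cong G\oplus(\coker\beta)[-1]$, and under these identifications the canonical map $L_0\to\Cone(\alpha)$ restricts on the $F_i$–summand to the quotient $q\colon L_0\twoheadrightarrow F_i$, while the canonical projection $\Cone(\beta)[-1]\to L_1$ restricts on the $G$–summand to the inclusion $\iota\colon G\hookrightarrow L_1$.

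Next I would invoke the standard fact that an upper–triangular morphism $\phi=\left(\begin{smallmatrix}\alpha&\varepsilon\\0&\beta\end{smallmatrix}\right)$ of direct sums sits in a distinguished triangle $\Cone(\alpha)\to\Cone(\phi)\to\Cone(\beta)[-1]\xrightarrow{w}\Cone(\alpha)[1]$, obtained by completing the evident map of split triangles via the octahedral axiom, whose connecting map $w$ is the composite of the projection $\Cone(\beta)[-1]\to L_1$, the class $\varepsilon\in\Ext^1(L_1,L_0)=\Hom(L_1,L_0[1])$, and the map $L_0[1]\to\Cone(\alpha)[1]$. Substituting the splittings turns $w$ into a morphism $G\oplus(\coker\beta)[-1]\to F_i[1]\oplus(\ker\alpha)[2]$, and here homological dimension one does the decisive work: the components $G\to(\ker\alpha)[2]$, $(\coker\beta)[-1]\to F_i[1]$ and $(\coker\beta)[-1]\to(\ker\alpha)[2]$ are valued in $\Ext^2$ and $\Ext^3$ groups and hence vanish. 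Thus $w$ is block–diagonal with a single surviving entry $w_0\colon G\to F_i[1]$, and tracing the composite identifies $w_0$ with the image $q_*\iota^*\varepsilon$ of $\varepsilon$ under $\Ext^1(L_1,L_0)\xrightarrow{\iota^*}\Ext^1(G,L_0)\xrightarrow{q_*}\Ext^1(G,F_i)$. Because $w$ is block–diagonal the triangle splits, giving $\Cone(\phi)\cong C\oplus(\ker\alpha)[1]\oplus(\coker\beta)[-1]$ with $C$ the extension of $G$ by $F_i$ of class $w_0$.

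It then remains to choose $\varepsilon$ with $w_0=\eta$, for then $C\cong F$ and $F$ appears as a summand. This uses the freedom in $\varepsilon$: in the long exact sequences attached to $0\to\ker q\to L_0\xrightarrow{q}F_i\to0$ and $0\to G\xrightarrow{\iota}L_1\to\im\beta\to0$, the cokernels of $q_*$ and $\iota^*$ inject into $\Ext^2(G,\ker q)=0$ and $\Ext^2(\im\beta,L_0)=0$, so both are surjective and $\eta$ lies in the image of $q_*\iota^*$. I expect the main obstacle to be the middle step: pinning down $w$ precisely enough to read off that its surviving component is exactly $q_*\iota^*\varepsilon$ (with the correct sign). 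I would settle this either by citing the explicit connecting map for the cone of an upper–triangular morphism, or by using that $\cA$ has enough injectives to resolve $L_0$, represent $\phi$ by an honest chain map, and compute the total complex directly.
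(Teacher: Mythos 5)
Your proof is correct, and it takes a genuinely different route from the paper's. The paper works backwards: it realizes $F\oplus\ker(\alpha)[1]\oplus\coker(\beta)[-1]$ directly as the cone of a map $e\oplus\varphi\oplus\psi$, transports this to a morphism $e'\in\Hom(\beta[-1],\alpha[1])$ via the cohomology splittings of the two-term complexes, and then pins everything down by an explicit chain-level computation with length-two injective resolutions, exhibiting $\Cone(e')$ as the cone of an upper-triangular matrix whose off-diagonal entry is a concrete lift $e''$. You instead work forwards from $\phi$: you extract the triangle $\Cone(\alpha)\to\Cone(\phi)\to\Cone(\beta)[-1]\xrightarrow{w}\Cone(\alpha)[1]$, use $\operatorname{hd}(\cA)=1$ to kill all components of $w$ except $w_0=q_*\iota^*\varepsilon\in\Ext^1(F/F_i,F_i)$, and then solve $q_*\iota^*\varepsilon=\eta$ using the surjectivity of $q_*$ and $\iota^*$ coming from the vanishing of $\Ext^2$ --- a step with no counterpart in the paper, which never needs to argue that $\eta$ is hit because its $\varepsilon$ is constructed rather than solved for. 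Your version is more structural and isolates exactly where homological dimension one enters (splitting of complexes, vanishing of the off-diagonal obstructions, surjectivity in the long exact sequences); the paper's version avoids having to justify the formula for the connecting map of an upper-triangular morphism, which is the one point you rightly flag as needing care. That formula is standard (for a degreewise-split short exact sequence of complexes the connecting map is the off-diagonal part of the differential, which here is precisely the composite $\Cone(\beta)[-1]\to L_1\xrightarrow{\varepsilon[1]}L_0[1]\to\Cone(\alpha)[1]$ up to sign), and your fallback of resolving injectively and computing the total complex is essentially the paper's own argument, so the gap is cosmetic rather than substantive.
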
 

\begin{proof} 
First note that $0 \to F_i \to F \to F / F_i \to 0$ determines a morphism $e:F / F_i [-1] \to F_i$ such that $\Cone(e) \cong F$. Writing $\varphi:0\to \ker(\alpha)[1]$ and $\psi:\coker(\beta)[-2]\to 0$, we have $e\oplus \varphi \oplus \psi: F/F_i[-1]\oplus \coker(\beta)[-2] \to F_i \oplus \ker(\alpha)[-1]$ with $\Cone(e\oplus \varphi \oplus \psi) = F\oplus \coker(\beta)[-1]\oplus \ker(\alpha)[1]$. Using \Cref{L:summand}, \eqref{E:(6)}, and \eqref{E:(7)} we have quasi-isomorphisms 
\[ 
    (L_{-1}\xrightarrow{\alpha}L_0) \simeq F_i[-1] \oplus \ker(\alpha) \quad \text{ and }\quad (L_1\xrightarrow{\beta}L_2) \simeq F/F_i\oplus \coker(\beta)[-1].
\]
In particular, $e\oplus \varphi \oplus \psi$ induces a map $e^\prime \in \Hom(\beta[-1], \alpha[1])$ with isomorphic cone. We will compute $\Cone(e')$ by injectively replacing $(L_{-1}\xrightarrow{\alpha} L_0)[1]$. First, we construct a commutative diagram:
\begin{equation}
\label{E:resolution}
\begin{tikzcd}
    0 \arrow[r]&L_{-1}\arrow[r,"i"]\arrow[d,"\alpha"]&I^0\arrow[d,"\alpha^0"]\arrow[r,"d_I"]&I^1\arrow[d,"\alpha^1"]\\
    0\arrow[r]&L_0\arrow[r,"j"]&J^0\arrow[r,"d_J"]&J^1
\end{tikzcd}
\end{equation}
where the horizontal rows are injective resolutions. Here, we have used $\rm{hd}(\cA) = 1$ to obtain length two resolutions. We now have an injective replacement:
\[
\begin{tikzcd}
    L_{-1}\arrow[r,"\alpha"]\arrow[d,"i"]&L_0\arrow[d,"\sbm{0\\ j}"] \\
    I^0\arrow[r,"\sbm{d_I\\ -\alpha^0}"]&I^1\oplus J^0\arrow[r,"\sbm{\alpha^1\amp d_J}"]&J^1.
\end{tikzcd}
\]
Now, $-e'$ can be lifted to a morphism of complexes:

\begin{equation}
\label{E:complex1}
    \begin{tikzcd}
        &&L_1\arrow[r,"\beta"]\arrow[d,"-e''"]&L_2\\
        I^{0}\arrow[r]&I^1\oplus J^0 \arrow[r]&J^1.&
    \end{tikzcd}
\end{equation}
By definition, the mapping cone of \eqref{E:complex1} is: 
\begin{equation}
\label{E:mappingconeofcomplex1}
    \begin{tikzcd}
        I^0\arrow[r,"\sbm{0\\d_I\\ -\alpha^0}"]
        &L_1\oplus I^1\oplus J^0 \arrow[rr,"\sbm{-\beta \amp 0 \amp 0 \\
        -e''\amp \alpha^1\amp d_J}"]&&L_2\oplus J^1
    \end{tikzcd}
\end{equation}  
Applying the injective resolutions of $L_{-1}$ and $L_0$ as in \eqref{E:resolution} to the proposed map $\phi$ from the lemma statement, we get a (vertical) morphism of complexes:
\begin{equation}
\label{E:statementmap}
\begin{tikzcd}
    I^0\arrow[r,"\sbm{0\\d_I}"]\arrow[d,"\alpha^0",swap]&L_1\oplus I^1\arrow[d,"\sbm{\beta\amp0\\ e''\amp \alpha^1}"]\\
    J^0\arrow[r,"\sbm{0\\d_J}"]&L_2\oplus J^1.
\end{tikzcd}
\end{equation}
The (vertical) cone of \eqref{E:statementmap} is the first row of \eqref{E:lastdiagram} and an isomorphism of this cone with \eqref{E:mappingconeofcomplex1} is given by the vertical arrows:
\begin{equation}
\label{E:lastdiagram}
    \begin{tikzcd}
        I^0\arrow[dd,"-\id",swap]\arrow[r,"\sbm{0\\-d_I\\ \alpha^0}"]
        &L_1\oplus I^1\oplus J^0 \arrow[dd,"\sbm{-1\amp\amp\\ \amp 1\amp \\ \amp \amp 1}"]
        \arrow[rr,"\sbm{\beta \amp 0 \amp 0 \\
        e''\amp \alpha^1\amp d_J}"]&&L_2\oplus J^1\arrow[dd,"\id"]\\
        &&\\
        I^0\arrow[r,swap,"\sbm{0\\d_I\\ -\alpha^0}"]
        &L_1\oplus I^1\oplus J^0 \arrow[rr,swap,"\sbm{-\beta \amp 0 \amp 0 \\
        -e''\amp \alpha^1\amp d_J}"]&&L_2\oplus J^1.
    \end{tikzcd}
\end{equation}
In particular, it follows that the cone of \eqref{E:statementmap}, which computes $\Cone(\phi)$, is isomorphic to $\Cone(-e'')$, and thus to $\Cone(e')$. Therefore, it contains $F$ as a summand, as claimed.
\end{proof}

\section{Stability lemmas}

We collect some facts about slope stability on $\cX$ that are used in \Cref{L:Orlovmainlemma}(b). For this section, let $F$ denote a vector bundle on $\cX$ and $0 = F_0\subset F_1\subset\cdots \subset F_n = F$ its HN filtration as in \Cref{P:stability}. The sequence of inclusions dualizes to a sequence of surjections 
\[
    F^\vee = F_n^\vee \twoheadrightarrow \cdots \twoheadrightarrow F_1^\vee \twoheadrightarrow F_0^\vee =0.
\]
Put $K_i = \ker(F^\vee \twoheadrightarrow F_i^\vee)$. We obtain a descending filtration $0 = K_n\subset K_{n-1}\subset \cdots \subset K_1\subset K_0 = F^\vee$.

\begin{lem}
\label{L:dualHN}
    $0 = K_n\subset K_{n-1}\subset \cdots \subset K_1\subset K_0 = F^\vee$ is the HN filtration of $F^\vee$.
\end{lem}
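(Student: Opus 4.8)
The plan is to verify directly that the descending filtration $0 = K_n\subset K_{n-1}\subset \cdots \subset K_0 = F^\vee$ has semistable successive quotients with strictly decreasing slopes, and then to invoke uniqueness of the Harder--Narasimhan filtration (whose existence is \Cref{P:stability}) to conclude that $K_\bullet$ is \emph{the} HN filtration of $F^\vee$. The two ingredients I expect to need are the identification $K_i \cong (F/F_i)^\vee$ and the slope reversal $\mu \mapsto -\mu$ under dualization from \Cref{P:propertiesofsemistable}(2).

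First I would establish that all the sheaves in sight are locally free, so that dualizing short exact sequences stays exact. Since $F$ is a vector bundle on the smooth orbifold curve $\cX$, each $F_i\subset F$ is torsion-free and hence locally free. Each HN quotient $F_i/F_{i-1}$ has finite slope, so a nonzero torsion subsheaf (which has slope $\infty$ by the footnote to \Cref{D:slopestability}) would violate semistability; thus $F_i/F_{i-1}$ is torsion-free, hence locally free. Consequently each $F/F_i$, being an iterated extension of torsion-free sheaves, is torsion-free and therefore locally free as well. Because dualizing a short exact sequence of vector bundles is exact, dualizing $0\to F_i\to F\to F/F_i\to 0$ identifies $K_i = \ker(F^\vee\twoheadrightarrow F_i^\vee)$ with $(F/F_i)^\vee$.

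Next I would compute the successive quotients. Dualizing $0\to F_i/F_{i-1}\to F/F_{i-1}\to F/F_i\to 0$ gives a short exact sequence $0\to K_i\to K_{i-1}\to (F_i/F_{i-1})^\vee\to 0$, so $K_{i-1}/K_i \cong (F_i/F_{i-1})^\vee$. By \Cref{P:propertiesofsemistable}(2) this quotient is semistable of slope $-\mu(F_i/F_{i-1})$. Reading $0=K_n\subset\cdots\subset K_0=F^\vee$ as an ascending filtration reverses the order of the pieces, while the slopes are negated; hence the strictly decreasing condition $\mu(F_i/F_{i-1})>\mu(F_{i+1}/F_i)$ of \Cref{P:stability} becomes exactly the strictly decreasing condition on the successive quotients of $K_\bullet$. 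Having checked that the $K_{i-1}/K_i$ are semistable with strictly decreasing slopes, uniqueness of HN filtrations forces $K_\bullet$ to coincide with the HN filtration of $F^\vee$. The only step requiring genuine care is the local-freeness bookkeeping of the second paragraph, since without it neither the identity $K_i\cong(F/F_i)^\vee$ nor the exactness needed to compute $K_{i-1}/K_i$ is available; the slope reindexing itself is purely formal once that is in place.
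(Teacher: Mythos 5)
Your proof is correct and follows essentially the same route as the paper's: both identify $K_{i-1}/K_i$ with $(F_i/F_{i-1})^\vee$ by dualizing the relevant short exact sequences and then use that dualization negates (hence reverses) the slopes while preserving semistability. Your version is slightly more careful than the paper's in spelling out the local-freeness of the $F_i/F_{i-1}$ and $F/F_i$ (needed for dualization to be exact) and in explicitly invoking uniqueness of the HN filtration, but the underlying argument is the same.
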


\begin{proof}
    First, note that the dual of a semistable bundle $E$ is semistable. Indeed, given $G \subset E^\vee$ we get a surjection $E\twoheadrightarrow G^\vee$ and since $E$ is semistable one has $\mu(E) \le \mu(G^\vee)$. Therefore, $\mu(G)\le \mu(E^\vee)$. Next, consider the following commutative diagram
    \[
    \begin{tikzcd}
        &&&0\arrow[d]&\\
        &&&F_{i+1}/F_i\arrow[d,hook]&\\ 
        0 \arrow[r]& F_i\arrow[d,hook] \arrow[r]& F\arrow[d,equal] \arrow[r]& F/F_i \arrow[r]\arrow[d,two heads]& 0\\
        0 \arrow[r]& F_{i+1} \arrow[r]& F\arrow[r] & F/F_{i+1}\arrow[r]\arrow[d] & 0 \\
        &&&0&
    \end{tikzcd}
    \]
    Dualising gives $K_i/K_{i+1} \cong (F_{i+1}/F_i)^\vee$, which is a semistable vector bundle with slope $-\mu(F_{i+1}/F_i)$. Write $G_i = K_{i-1}/K_i$. It follows that $\mu(G_n)>\cdots > \mu(G_1)$. 
\end{proof}

\begin{lem}
\label{L:Orlovlemmab}
    \Cref{L:Orlovmainlemma}(b) follows from \Cref{L:Orlovmainlemma}(a). 
\end{lem}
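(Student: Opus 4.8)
The plan is to deduce (b) from (a) by dualizing and twisting. Write $Q = F/F_i$, or $Q=F$ in the case $\mu(F_1)<4g+2n$ (where effectively $i=0$). The Harder--Narasimhan filtration of the quotient $Q$ has graded pieces the $F_j/F_{j-1}$ with $j>i$, each of slope $\mu(F_j/F_{j-1})<4g+2n$ by the maximality of $i$.

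First I would pass to the twisted dual $\widetilde F := Q^\vee\otimes\pi^*L$. Applying \Cref{L:dualHN} to $Q$, the HN graded pieces of $Q^\vee$ are the $(F_j/F_{j-1})^\vee$, of slope $-\mu(F_j/F_{j-1})>-(4g+2n)$. Since $\det(G\otimes\pi^*L)\cong\det(G)\otimes(\pi^*L)^{\otimes\rk G}$, tensoring by the line bundle $\pi^*L$ raises the slope of every bundle by $\deg(\pi^*L)=\deg L\ge 8g+4n$ while preserving semistability (\Cref{P:propertiesofsemistable}); hence every HN graded piece of $\widetilde F$ has slope $>-(4g+2n)+(8g+4n)=4g+2n$. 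In particular all HN slopes of $\widetilde F$ are $\ge 4g+2n$, so part (a), applied to $\widetilde F$ with the maximal index equal to the top of the filtration, yields a right-exact sequence
\[
    (\pi^*L^{-1}\otimes\cE)^{\oplus r_1}\xrightarrow{\alpha}\cE^{\oplus r_0}\to\widetilde F\to 0.
\]

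Next I would dualize. Set $W=\ker(\cE^{\oplus r_0}\to\widetilde F)$; since $\widetilde F$ is locally free the surjection is locally split, so $W$ is locally free and $(-)^\vee=\sheafhom(-,\cO_{\cX})$ is exact on the bundles in sight. Dualizing the short exact sequence $0\to W\to\cE^{\oplus r_0}\to\widetilde F\to 0$ together with the surjection $\alpha\colon(\pi^*L^{-1}\otimes\cE)^{\oplus r_1}\twoheadrightarrow W$ gives
\[
    0\to\widetilde F^\vee\to(\cE^\vee)^{\oplus r_0}\xrightarrow{\alpha^\vee}(\pi^*L\otimes\cE^\vee)^{\oplus r_1},
\]
exact at the first two terms because $W^\vee\hookrightarrow(\pi^*L\otimes\cE^\vee)^{\oplus r_1}$ is injective. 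Finally, tensoring by the invertible sheaf $\pi^*L$ and using $\widetilde F^\vee=Q\otimes\pi^*L^{-1}$ produces
\[
    0\to Q\to(\pi^*L\otimes\cE^\vee)^{\oplus r_0}\xrightarrow{\beta}(\pi^*L^2\otimes\cE^\vee)^{\oplus r_1},\qquad\beta=\alpha^\vee\otimes\id_{\pi^*L},
\]
which is exactly (b) (with $s_0=r_0$, $s_1=r_1$) for $Q=F/F_i$, and the displayed sequence for $Q=F$ in the special case. The main obstacle I expect is the slope bookkeeping of the second step --- confirming that the twisted dual genuinely clears the threshold $4g+2n$ so that (a) applies --- together with the exactness of the dualization step, which hinges on the intermediate kernel $W$ being locally free; the remaining manipulations are formal.
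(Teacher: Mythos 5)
Your argument is correct and is essentially the paper's own proof: both apply \Cref{L:dualHN} to see that \Cref{L:Orlovmainlemma}(a) applies to $(F/F_i)^\vee\otimes\pi^*L$ and then dualize the resulting presentation. You simply spell out the slope bookkeeping and the exactness of the dualization step, which the paper leaves implicit.
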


\begin{proof}
    Using \Cref{L:dualHN}, one computes that \Cref{L:Orlovmainlemma}(a) applies to $K_i \otimes \pi^*L$. As $K_i = (F/F_i)^\vee$, we obtain $(\pi^*L^{-1}\otimes \cE)^{\oplus s_1} \to \cE^{\oplus s_0}\to (F/F_i)^\vee \otimes \pi^*L \to 0$ and dualizing this gives (b) as claimed. 
\end{proof}

\bibliography{bibl}

@article{Rouquier_2008, title={Dimensions of triangulated categories}, volume={1}, DOI={10.1017/is008004024jkt010}, number={2}, journal={Journal of K-Theory}, author={Rouquier, Raphaël}, year={2008}, pages={193–256}}

@article{orlov2008remarks,
  title={Remarks on generators and dimensions of triangulated categories},
  author={Orlov, Dmitri},
  journal={arXiv preprint arXiv:0804.1163},
  year={2008}
}

@misc{Nironimodulispaces,
      title={Moduli Spaces of Semistable Sheaves on Projective Deligne-Mumford Stacks}, 
      author={Fabio Nironi},
      year={2009},
      eprint={0811.1949},
      archivePrefix={arXiv},
      primaryClass={math.AG}
}

@ARTICLE{olsonStarr,
       author = {{Olsson}, Martin and {Starr}, Jason},
        title = "{Quot Functors for Deligne-Mumford Stacks}",
      journal = {arXiv Mathematics e-prints},
     keywords = {Mathematics - Algebraic Geometry, 14J10},
         year = 2002,
        month = apr,
          eid = {math/0204307},
        pages = {math/0204307},
          doi = {10.48550/arXiv.math/0204307},
archivePrefix = {arXiv},
       eprint = {math/0204307},
 primaryClass = {math.AG},
       adsurl = {https://ui.adsabs.harvard.edu/abs/2002math......4307O},
      adsnote = {Provided by the SAO/NASA Astrophysics Data System}
}

@inproceedings{Kresch,
  title={On the geometry of Deligne-Mumford stacks},
  author={Kresch, Andrew},
  booktitle={Algebraic geometry—Seattle 2005. Part 1},
  pages={259--271},
  year={2009},
  organization={Amer. Math. Soc. Providence, RI}
}

@book{Behrend_2014, 
    place={Cambridge}, 
    series={London Mathematical Society Lecture Note Series}, 
    title={Introduction to algebraic stacks}, booktitle={Moduli Spaces}, publisher={Cambridge University Press}, author={Behrend, K.}, 
    editor={Brambila-Paz, Leticia and Newstead, Peter and Thomas, Richard P. and García-Prada, OscarEditors}, 
    year={2014}, 
    pages={1–131}, 
    collection={London Mathematical Society Lecture Note Series}
}

@book{HuybrechtsFM,
    author = {Huybrechts, D.},
    title = "{Fourier-Mukai Transforms in Algebraic Geometry}",
    publisher = {Oxford University Press},
    year = {2006},
    month = {04},
    abstract = "{This book provides a systematic exposition of the theory of Fourier-Mukai transforms from an algebro-geometric point of view. Assuming a basic knowledge of algebraic geometry, the key aspect of this book is the derived category of coherent sheaves on a smooth projective variety. The derived category is a subtle invariant of the isomorphism type of a variety, and its group of autoequivalences often shows a rich structure. As it turns out — and this feature is pursued throughout the book — the behaviour of the derived category is determined by the geometric properties of the canonical bundle of the variety. Including notions from other areas, e.g., singular cohomology, Hodge theory, abelian varieties, K3 surfaces; full proofs and exercises are provided. The final chapter summarizes recent research directions, such as connections to orbifolds and the representation theory of finite groups via the McKay correspondence, stability conditions on triangulated categories, and the notion of the derived category of sheaves twisted by a gerbe.}",
    isbn = {9780199296866},
    doi = {10.1093/acprof:oso/9780199296866.001.0001},
    url = {https://doi.org/10.1093/acprof:oso/9780199296866.001.0001},
}

@article{BallardFavero,
    author = {Ballard, Matthew and Favero, David},
    title = "{Hochschild Dimensions of Tilting Objects}",
    journal = {International Mathematics Research Notices},
    volume = {2012},
    number = {11},
    pages = {2607-2645},
    year = {2012},
    month = {01},
    abstract = "{We give a new upper bound for the generation time of a tilting object and use it to verify, in some new cases, a conjecture of Orlov on the Rouquier dimension of the derived category of coherent sheaves on a smooth variety.}",
    issn = {1073-7928},
    doi = {10.1093/imrn/rnr124},
    url = {https://doi.org/10.1093/imrn/rnr124},
    eprint = {https://academic.oup.com/imrn/article-pdf/2012/11/2607/2385513/rnr124.pdf},
}

@article{elagin2022calculating,
  title={Calculating dimension of triangulated categories: path algebras, their tensor powers and orbifold projective lines},
  author={Elagin, Alexey},
  journal={Journal of Algebra},
  volume={592},
  pages={357--401},
  year={2022},
  publisher={Elsevier}
}

@article{elagin2021three,
  title={Three notions of dimension for triangulated categories},
  author={Elagin, Alexey and Lunts, Valery A},
  journal={Journal of Algebra},
  volume={569},
  pages={334--376},
  year={2021},
  publisher={Elsevier}
}

@article{olander2024diagonal,
  title={Diagonal Dimension of Curves},
  author={Olander, Noah},
  journal={International Mathematics Research Notices},
  volume={2024},
  number={10},
  pages={8172--8184},
  year={2024},
  publisher={Oxford University Press}
}

@misc{alper2023stacks,
  title={Stacks and moduli},
  author={Alper, Jarod},
  year={2023}
}

@article{bondalVdb,
title = "Generators and representability of functors in commutative and non-commutative geometry",
abstract = "We give a sufficient condition for an Ext-finite triangulated category to be saturated. Saturatedness means that every contravariant cohomological functor of finite type to vector spaces is representable. The condition consists in existence of a strong generator. We prove that the bounded derived categories of coherent sheaves on smooth proper commutative and noncommutative varieties have strong generators, hence saturated. In contrast the similar category for a smooth compact analytic surface with no curves is not saturated.",
keywords = "Noncommutative geometry",
author = "Bondal Alexey and {Van Den Bergh}, Michel",
year = "2003",
language = "English",
volume = "3",
pages = "1--36",
journal = "Moscow Mathematical Journal",
issn = "1609-3321",
publisher = "Independent University of Moscow",
}

@article{BondalKapranov,
doi = {10.1070/IM1990v035n03ABEH000716},
url = {https://dx.doi.org/10.1070/IM1990v035n03ABEH000716},
year = {1990},
month = {jun},
publisher = {},
volume = {35},
number = {3},
pages = {519},
author = {A I Bondal and  M M Kapranov},
title = {REPRESENTABLE FUNCTORS, SERRE FUNCTORS, AND MUTATIONS},
journal = {Mathematics of the USSR-Izvestiya},
abstract = {This paper studies the categorical version of the concept of mutations of an exceptional set, as used in the theory of vector bundles. The basic object of study is a triangulated category with a family of subcategories satisfying the so-called admissibility condition. A natural notion arising here is that of a Serre functor, effecting a certain duality in the triangulated category. 

Bibliography: 16 titles.}
}

@phdthesis{Taams,
    author = {Taams, Lisanne},
    title = {Sheaves on stacky curves},
    publisher = {Radboud University},
    year = {2024}
}

@misc{VoightZB,
      title={The canonical ring of a stacky curve}, 
      author={John Voight and David Zureick-Brown},
      year={2022},
      eprint={1501.04657},
      archivePrefix={arXiv},
      primaryClass={math.AG},
      url={https://arxiv.org/abs/1501.04657}, 
}

@book{HartshorneAG,
  author = {Hartshorne, Robin},
  biburl = {https://www.bibsonomy.org/bibtex/2c938cdcde6d6f0c3f97833390f7e2f44/gdmcbain},
  citeulike-article-id = {7473191},
  citeulike-linkout-0 = {http://www.worldcat.org/oclc/2798099},
  comment = {(private-note)Holdings: 'bookfinder':http://www.bookfinder.com/search/?author=Robin+Hartshorne\&title=Algebraic+Geometry+(Graduate+Texts+in+Mathematics)\&lang=en\&st=xl\&ac=qr 35.82 AUD 2010-07-14 (paperback Indian international edition)},
  interhash = {3d954e505946065b1caaf2eb3d6ff6d7},
  intrahash = {c938cdcde6d6f0c3f97833390f7e2f44},
  keywords = {14-01-algebraic-geometry-instructional-exposition},
  posted-at = {2010-07-14 00:52:02},
  priority = {2},
  publisher = {Springer},
  series = {Graduate Texts in Mathematics},
  timestamp = {2017-06-29T07:13:07.000+0200},
  title = {{Algebraic Geometry}},
  url = {http://www.worldcat.org/oclc/2798099},
  volume = 52,
  year = 1977
}

@article{GeometricitiyBerghLuntsSchnurer,
   title={Geometricity for derived categories of algebraic stacks},
   volume={22},
   ISSN={1420-9020},
   url={http://dx.doi.org/10.1007/s00029-016-0280-8},
   DOI={10.1007/s00029-016-0280-8},
   number={4},
   journal={Selecta Mathematica},
   publisher={Springer Science and Business Media LLC},
   author={Bergh, Daniel and Lunts, Valery A. and Schnürer, Olaf M.},
   year={2016},
   month=oct, pages={2535–2568} }

@article{ReitenvDB,
	Author = {I. Reiten and M. van den Bergh}, 
	Title = {Noetherian hereditary abelian categories satisfying Serre duality},
	Journal = {Journal of the AMS},
	Year = {2002},
	Volume = {15},
	Number = {2},
	Pages = {295--366},
	Publisher = {AMS},
	Type = {Article},
	Language = {en},
	ISSN = {0894-0347},
	Keywords = {noetherian hereditary abelian categories; Serre duality; saturation property},
	Unique-ID = {000173651000002}
}

@article {Br07,
    AUTHOR = {Bridgeland, Tom},
     TITLE = {Stability conditions on triangulated categories},
   JOURNAL = {Ann. of Math. (2)},
  FJOURNAL = {Annals of Mathematics. Second Series},
    VOLUME = {166},
      YEAR = {2007},
    NUMBER = {2},
     PAGES = {317--345},
      ISSN = {0003-486X},
   MRCLASS = {14F05 (18E30)},
  MRNUMBER = {2373143},
MRREVIEWER = {Leovigildo M. Alonso Tarrio},
       DOI = {10.4007/annals.2007.166.317},
       URL = {https://doi.org/10.4007/annals.2007.166.317},
}

@misc{macrischmidt,
      title={Lectures on Bridgeland Stability}, 
      author={Emanuele Macrì and Benjamin Schmidt},
      year={2019},
      eprint={1607.01262},
      archivePrefix={arXiv},
      primaryClass={math.AG},
      url={https://arxiv.org/abs/1607.01262}, 
}

@article{Ikeda_Qiu_2023, 
    title={q-stability conditions on Calabi–Yau-X categories}, 
    volume={159}, 
    DOI={10.1112/S0010437X23007194}, 
    number={7}, 
    journal={Compositio Mathematica}, 
    author={Ikeda, Akishi and Qiu, Yu}, 
    year={2023}, 
    pages={1347–1386},
}

@article{Qiu2022,
    author={Qiu, Yu},
    title={Global dimension function on stability   conditions and Gepner equations},
    journal={Mathematische Zeitschrift},
    year={2022},
    month={Dec},
    day={07},
    volume={303},
    number={1},
    pages={11},
    issn={1432-1823},
    doi={10.1007/s00209-022-03170-w},
    url={https://doi.org/10.1007/s00209-022-03170-w}
}

@article{KOT,
author={Kikuta, Kohei
and Ouchi, Genki
and Takahashi, Atsushi},
title={Serre dimension and stability conditions},
journal={Mathematische Zeitschrift},
year={2021},
month={Oct},
day={01},
volume={299},
number={1},
pages={997-1013},
abstract={We study relations between the Serre dimension defined as the growth of the entropy of the Serre functor and the global dimension of Bridgeland stability conditions due to Ikeda--Qiu. A fundamental inequality between the Serre dimension and the infimum of the global dimensions is proved. Moreover, we characterize Gepner type stability conditions on fractional Calabi--Yau categories via the Serre dimension, and classify triangulated categories of Serre dimension lower than one with a Gepner type stability condition.},
issn={1432-1823},
doi={10.1007/s00209-021-02718-6},
url={https://doi.org/10.1007/s00209-021-02718-6}
}

@misc{OtaniGldim,
      title={Global dimension of the derived category of an orbifold projective line}, 
      author={Takumi Otani},
      year={2023},
      eprint={2306.16673},
      archivePrefix={arXiv},
      primaryClass={math.AG},
      url={https://arxiv.org/abs/2306.16673}, 
}

@inproceedings{DHKK,
title = "Dynamical systems and categories",
author = "George Dimitrov and Fabian Haiden and Ludmil Katzarkov and Maxim Kontsevich",
year = "2014",
language = "English",
volume = "621",
pages = "133 -- 170",
booktitle = "The Influence of Solomon Lefschetz in Geometry and Topology: 50 Years of Mathematics at CINVESTAV",
publisher = "American Mathematical Society",

}

@article{KeelMori,
 ISSN = {0003486X},
 URL = {http://www.jstor.org/stable/2951828},
 author = {Sean Keel and Shigefumi Mori},
 journal = {Annals of Mathematics},
 number = {1},
 pages = {193--213},
 publisher = {Annals of Mathematics},
 title = {Quotients by Groupoids},
 urldate = {2024-08-25},
 volume = {145},
 year = {1997}
}

@InProceedings{GL,
author="Geigle, Werner
and Lenzing, Helmut",
editor="Greuel, Gert-Martin
and Trautmann, G{\"u}nther",
title="A class of weighted projective curves arising in representation theory of finite dimensional algebras",
booktitle="Singularities, Representation of Algebras, and Vector Bundles",
year="1987",
publisher="Springer Berlin Heidelberg",
address="Berlin, Heidelberg",
pages="265--297",
isbn="978-3-540-47851-5"
}

@article{BondalAssociative,
doi = {10.1070/IM1990v034n01ABEH000583},
url = {https://dx.doi.org/10.1070/IM1990v034n01ABEH000583},
year = {1990},
month = {feb},
publisher = {},
volume = {34},
number = {1},
pages = {23},
author = {A I Bondal},
title = {REPRESENTATION OF ASSOCIATIVE ALGEBRAS
AND COHERENT SHEAVES},
journal = {Mathematics of the USSR-Izvestiya},
abstract = {It is proved that a triangulated category generated by a strong exceptional collection is equivalent to the derived category of modules over an algebra of homomorphisms of this collection. For the category of coherent sheaves on a Fano variety, the functor of tightening to a canonical class is described by means of mutations of an exceptional collection generating the category. The connection between mutability of strong exceptional collections and the Koszul property is studied. It is proved that in the geometric situation mutations of exceptional sheaves consist of present sheaves and the corresponding algebra of homomorphisms is Koszul and selfconsistent. 

Bibliography: 18 titles.}
}

@article{Totaroresolution,
url = {https://doi.org/10.1515/crll.2004.2004.577.1},
title = {The resolution property for schemes and stacks},
author = {Burt Totaro},
pages = {1-22},
volume = {2004},
number = {577},
journal = {Journal für die reine und angewandte Mathematik},
doi = {doi:10.1515/crll.2004.2004.577.1},
year = {2004},
lastchecked = {2024-09-04}
}

@article{AGV,
url = {https://muse.jhu.edu/pub/1/article/250583},
title = {Gromov-Witten theory of Deligne-Mumford stacks},
author = {Dan Abramovich and Tom Graber and Angelo Vistoli},
pages = {1337-1398},
volume = {130},
number = {5},
journal = {American Journal of Mathematics},
year = {2008},
}

@article{ChenRuan,
   title={A New Cohomology Theory of Orbifold},
   volume={248},
   ISSN={1432-0916},
   url={http://dx.doi.org/10.1007/s00220-004-1089-4},
   DOI={10.1007/s00220-004-1089-4},
   number={1},
   journal={Communications in Mathematical Physics},
   publisher={Springer Science and Business Media LLC},
   author={Chen, Weimin and Ruan, Yongbin},
   year={2004},
   month=may, pages={1–31} }

@article{FaveroHuang,
   title={Rouquier dimension is Krull dimension for normal toric varieties},
   volume={9},
   ISSN={2199-6768},
   url={http://dx.doi.org/10.1007/s40879-023-00686-1},
   DOI={10.1007/s40879-023-00686-1},
   number={4},
   journal={European Journal of Mathematics},
   publisher={Springer Science and Business Media LLC},
   author={Favero, David and Huang, Jesse},
   year={2023},
   month=oct }

@ARTICLE{ConradKM,
    AUTHOR = "Conrad, Brian",
    TITLE = "Keel-Mori theorem via stacks",
    URL = "https://math.stanford.edu/\textasciitilde conrad/papers/coarsespace.pdf",
    YEAR = "2005"
}

@article{GAGA,
     author = {Serre, Jean-Pierre},
     title = {G\'eom\'etrie alg\'ebrique et g\'eom\'etrie analytique},
     journal = {Annales de l'Institut Fourier},
     pages = {1--42},
     publisher = {Imprimerie Louis-Jean},
     address = {Gap},
     volume = {6},
     year = {1956},
     doi = {10.5802/aif.59},
     mrnumber = {18,511a},
     zbl = {0075.30401},
     language = {fr},
     url = {http://www.numdam.org/articles/10.5802/aif.59/}
}

@article{Bayer_short,
	Abstract = {The key result in the theory of Bridgeland stability conditions is the property that they form a complex manifold. This comes from the fact that given any small deformation of the central charge, there is a unique way to correspondingly deform the stability condition. We give a short direct proof of an effective version of this deformation property.},
	Author = {Bayer, Arend},
	Da = {2019/12/01},
	Date-Added = {2023-02-08 22:32:33 +0000},
	Date-Modified = {2023-02-08 22:32:33 +0000},
	Doi = {10.1007/s00208-019-01900-w},
	Id = {Bayer2019},
	Isbn = {1432-1807},
	Journal = {Mathematische Annalen},
	Number = {3},
	Pages = {1597--1613},
	Title = {A short proof of the deformation property of Bridgeland stability conditions},
	Ty = {JOUR},
	Url = {https://doi.org/10.1007/s00208-019-01900-w},
	Volume = {375},
	Year = {2019},
	Bdsk-Url-1 = {https://doi.org/10.1007/s00208-019-01900-w},
	Bdsk-Url-2 = {http://dx.doi.org/10.1007/s00208-019-01900-w}
}

@misc{AbdelgadirUeda,
      title={Weighted projective lines as fine moduli spaces of quiver representations}, 
      author={Tarig Abdelgadir and Kazushi Ueda},
      year={2014},
      eprint={1409.7050},
      archivePrefix={arXiv},
      primaryClass={math.AG},
      url={https://arxiv.org/abs/1409.7050}, 
}

@misc{stacks-project,
  author       = {The {Stacks project authors}},
  title        = {The Stacks project},
  howpublished = {\url{https://stacks.math.columbia.edu}},
  year         = {2024},
}

@article{Beilinsoncollection,
	author = {Beilinson, A.  A. },
	date = {1978/07/01},
	date-added = {2024-10-21 12:05:23 -0400},
	date-modified = {2024-10-21 12:05:23 -0400},
	doi = {10.1007/BF01681436},
	id = {Beilinson1978},
	isbn = {1573-8485},
	journal = {Functional Analysis and Its Applications},
	number = {3},
	pages = {214--216},
	title = {Coherent sheaves on Pn and problems of linear algebra},
	url = {https://doi.org/10.1007/BF01681436},
	volume = {12},
	year = {1978},
	bdsk-url-1 = {https://doi.org/10.1007/BF01681436}
}

@article{OrlovSmoothandproper,
   title={Smooth and proper noncommutative schemes and gluing of DG categories},
   volume={302},
   ISSN={0001-8708},
   url={http://dx.doi.org/10.1016/j.aim.2016.07.014},
   DOI={10.1016/j.aim.2016.07.014},
   journal={Advances in Mathematics},
   publisher={Elsevier BV},
   author={Orlov, Dmitri},
   year={2016},
   month=oct, pages={59–105} 
}

@article{TabuadavandenBergh,
   title={Noncommutative motives of Azumaya algebras},
   volume={14},
   ISSN={1475-3030},
   url={http://dx.doi.org/10.1017/S147474801400005X},
   DOI={10.1017/s147474801400005x},
   number={2},
   journal={Journal of the Institute of Mathematics of Jussieu},
   publisher={Cambridge University Press (CUP)},
   author={Tabuada, Gonçalo and Van den Bergh, Michel},
   year={2014},
   month=mar, pages={379–403} }

@article{ElaginLuntsSchnurer,
author = {Elagin, Alexey and Lunts, Valery and Schnürer, Olaf},
year = {2020},
month = {04},
pages = {277-309},
title = {Smoothness of Derived Categories of Algebras},
volume = {20},
journal = {Moscow Mathematical Journal},
doi = {10.17323/1609-4514-2020-2-277-309}
}

@article{Kuznetsov_2019,
   title={On residual categories for Grassmannians},
   volume={120},
   ISSN={1460-244X},
   url={http://dx.doi.org/10.1112/plms.12294},
   DOI={10.1112/plms.12294},
   number={5},
   journal={Proceedings of the London Mathematical Society},
   publisher={Wiley},
   author={Kuznetsov, Alexander and Smirnov, Maxim},
   year={2019},
   month=sep, pages={617–641} }

@article{Orlovgeometricrealization,
   title={Geometric realizations of quiver algebras},
   volume={290},
   ISSN={1531-8605},
   url={http://dx.doi.org/10.1134/S0081543815060073},
   DOI={10.1134/s0081543815060073},
   number={1},
   journal={Proceedings of the Steklov Institute of Mathematics},
   publisher={Pleiades Publishing Ltd},
   author={Orlov, Dmitri O.},
   year={2015},
   month=aug, pages={70–83} }

@article{Kellerderivingdg,
     author = {Keller, Bernhard},
     title = {Deriving {DG} categories},
     journal = {Annales scientifiques de l'\'Ecole Normale Sup\'erieure},
     pages = {63--102},
     publisher = {Elsevier},
     volume = {Ser. 4, 27},
     number = {1},
     year = {1994},
     doi = {10.24033/asens.1689},
     mrnumber = {95e:18010},
     zbl = {0799.18007},
     language = {en},
     url = {http://www.numdam.org/articles/10.24033/asens.1689/}
}

\end{document}